\newcommand{\new}[1]{#1} 
\renewcommand{\=}{\,=\,}
\renewcommand{\vec}{\bm}
\newcommand{\tensor}{\bm}
\newcommand{\p}{p_\mathrm{w}}
\newcommand{\q}{\vec{q}_\mathrm{w}}
\renewcommand{\u}{\vec{u}}
\newcommand{\pzero}{p_{\mathrm{w},0}}
\newcommand{\uzero}{\vec{u}_0}
\newcommand{\szero}{s_\mathrm{w,0}}
\newcommand{\pquh}[1]{(p,\vec{q},\vec{u})_h^{#1}}
\newcommand{\dpquh}[1]{\varDelta \pquh{#1}}
\newcommand{\incph}{\varDelta p_h}
\newcommand{\incqh}{\varDelta \vec{q}_h}
\newcommand{\incuh}{\varDelta \vec{u}_h}
\newcommand{\pqh}[1]{(p,\vec{q})_h^{#1}}
\newcommand{\dpqh}[1]{\varDelta \pqh{#1}}
\newcommand{\xn}[1]{\mathbf{x}^{#1}}
\newcommand{\plainFP}{\mathcal{FP}}
\newcommand{\FP}[1]{\plainFP(#1)}
\newcommand{\dFP}[1]{\Delta\FP{#1}}
\newcommand{\s}{s_\mathrm{w}}
\newcommand{\smin}{s_{\mathrm{w},\mathrm{min}}}
\newcommand{\permeability}{k_\mathrm{w}}
\newcommand{\rhob}{\rho_\mathrm{b}}
\newcommand{\rhow}{\rho_\mathrm{w}}
\newcommand{\ppore}{p_\mathrm{E}}
\newcommand{\pc}{p_\mathrm{c}}
\newcommand{\pw}{p_\mathrm{w}}
\newcommand{\poroelasticstress}{\tensor{\sigma}^\mathrm{por}}
\newcommand{\stress}{\tensor{\sigma}}
\newcommand{\I}{\tensor{I}}
\newcommand{\g}{\vec{g}}
\renewcommand{\k}{k_\mathrm{abs}}
\newcommand{\n}{\vec{n}}
\newcommand{\NABLA}{\vec{\nabla}}
\newcommand{\DIV}{\NABLA\cdot}
\newcommand{\GRAD}{\NABLA}
\newcommand{\eps}[1]{\tensor{\varepsilon}(#1)}
\newcommand{\Vh}{\bm{V}_h}
\newcommand{\Qh}{W_h}
\newcommand{\Zh}{\bm{Z}_h}
\newcommand{\QZVh}{\Qh \times \Zh \times \Vh}
\newcommand{\QZh}{\Qh \times \Zh}
\newcommand{\ph}{p_h}
\newcommand{\qh}{\vec{q}_h}
\newcommand{\uh}{\vec{u}_h}
\newcommand{\testph}{w_h}
\newcommand{\testqh}{\vec{z}_h}
\newcommand{\testuh}{\vec{v}_h}
\newcommand{\testpqh}{(w,\vec{z})_h}
\newcommand{\testpquh}{(w,\vec{z},\vec{v})_h}
\newcommand{\sn}[1]{\s^{n,#1}} 
\newcommand{\pn}[1]{\ph^{n, #1}}
\newcommand{\qn}[1]{\qh^{n, #1}}
\newcommand{\un}[1]{\uh^{n, #1}}
\newcommand{\dpn}[1]{\varDelta \ph^{n,#1}}
\newcommand{\dqn}[1]{\varDelta \qh^{n,#1}}
\newcommand{\dun}[1]{\varDelta \uh^{n,#1}}
\newcommand{\dpqun}[1]{\varDelta (p,\vec{q},\vec{u})_h^{n,#1}}
\newcommand{\aVG}{{a_\mathrm{vG}}}
\newcommand{\nVG}{{n_\mathrm{vG}}}
\newcommand{\A}{\mathbf{A}}
\newcommand{\Id}{\mathbf{I}}
\renewcommand{\b}{\mathbf{b}}
\newcommand{\x}[1]{\mathbf{x}^{#1}}
\newcommand{\e}[1]{\mathbf{e}^{#1}}
\newcommand{\hate}[1]{\hat{\mathbf{e}}^{#1}}
\newcommand{\betaWeight}{\bm{\beta}}
\newcommand{\bV}{\mathbf{b}}
\newcommand{\uV}[1]{\mathbf{u}^{#1}}
\newcommand{\pV}[1]{\mathbf{p}^{#1}}
\newcommand{\qV}[1]{\mathbf{q}^{#1}}
\newcommand{\hV}{\mathbf{h}}
\newcommand{\nullV}{\mathbf{0}}
\newcommand{\peV}{\mathbf{p}_\mathrm{E}}
\newcommand{\phiV}{\bm{\phi}_\mathrm{0}}
\newcommand{\phipV}{\bm{\phi}}
\newcommand{\pVSpace}{\mathbf{P}_{\phi\geq0}}
\newcommand{\DM}{\mathbf{D}}
\newcommand{\KM}{\mathbf{K}}
\newcommand{\km}{k_\mathrm{m}}
\newcommand{\kM}{k_\mathrm{M}}
\newcommand{\epV}[1]{\mathbf{e}_{\pV{}}^{#1}}
\newcommand{\NP}{n_\mathrm{p}}
\newcommand{\NQ}{n_\mathrm{q}}
\newcommand{\NU}{n_\mathrm{u}}
\newcommand{\LipschitzB}{L_\mathrm{b}}
\newcommand{\lipschitzB}{l_\mathrm{b}}
\newcommand{\LipschitzK}{L_\mathrm{K}}
\newcommand{\LipschitzS}{L_\mathrm{s}}
\newcommand{\LipschitzLambdaT}{\tilde{L}_{\mathrm{K}}}
\newcommand{\DbV}{\mathbf{D}_\mathbf{b}}
\newcommand{\StiffnessMechanics}{\mathbf{A}_\mathrm{uu}}
\newcommand{\DivPU}{\mathbf{D}_\mathrm{pu}}
\newcommand{\DivPQ}{\mathbf{D}_\mathrm{pq}}
\newcommand{\MassMatrixP}[1]{\mathbf{M}_\mathrm{pp}^{#1}}
\newcommand{\MassMatrixQ}[1]{\mathbf{M}_\mathrm{qq}^{#1}}
\newcommand{\LMatrix}{\mathbf{L}_{\mathrm{pp}}}
\newcommand{\MassSaturation}{\mathbf{S}_{\mathrm{pp}}}
\newcommand{\MassPermeability}{\mathbf{K}_\mathrm{qq}}
\newcommand{\rhsVMechanics}{\mathbf{f}_\mathrm{u}}
\newcommand{\rhsVPressure}{\mathbf{f}_\mathrm{p}}
\newcommand{\rhsVFlux}{\mathbf{f}_\mathrm{q}}
\newtheorem{theorem}{Theorem}
\newtheorem{lemma}[theorem]{Lemma}
\newtheorem{corollary}[theorem]{Corollary}
\newtheorem{remark}{Remark}
\theoremstyle{definition}
\newtheorem{algo}{Algorithm}
\newenvironment{indentblock}{\noindent \hspace{0.5cm} \begin{minipage}{0.9\textwidth}}{\end{minipage}}
\begin{document}

\begin{frontmatter}
 \title{Anderson accelerated fixed-stress splitting schemes for consolidation of unsaturated porous media}

 \author[uib]{Jakub~Wiktor~Both\corref{cor1}}
 \ead{jakub.both@uib.no}
 \author[uib]{Kundan~Kumar}
 \ead{kundan.kumar@uib.no}
 \author[uib,princeton]{Jan~Martin~Nordbotten}
 \ead{jan.nordbotten@uib.no}
 \author[uib]{Florin~Adrian~Radu}
 \ead{florin.radu@uib.no}
 
 \address[uib]{Department of Mathematics, University of Bergen, Bergen, Norway}
 \address[princeton]{Department of Civil and Environmental Engineering, Princeton University, Princeton, NJ, USA}
 
 \cortext[cor1]{Corresponding author}
 
\begin{abstract}

 In this paper, we study the robust linearization of nonlinear poromechanics of unsaturated materials.
 The model of interest couples the Richards equation with linear elasticity equations, employing the 
 equivalent pore pressure. 
 In practice a monolithic solver is not always available, defining the requirement for a linearization scheme
 to allow the use of separate simulators, which is not met by the classical Newton method.
 We propose three different linearization schemes incorporating the fixed-stress splitting scheme,
 coupled with an L-scheme, Modified Picard and Newton linearization of the flow. 
 All schemes allow the efficient and robust decoupling of mechanics and flow equations. 
 In particular, the simplest scheme, the Fixed-Stress-L-scheme, employs solely constant diagonal stabilization,
 has low cost per iteration, and is very robust.
 Under mild, physical assumptions, it is theoretically shown to be a contraction. 
 Due to possible break-down or slow convergence of all considered splitting schemes, Anderson acceleration 
 is applied as post-processing. 
 Based on a special case, we justify theoretically the general ability of the Anderson acceleration 
 to effectively accelerate convergence and stabilize the underlying scheme, allowing even non-contractive 
 fixed-point iterations to converge.
 To our knowledge, this is the first theoretical indication of this kind. 
 Theoretical findings are confirmed by numerical results. 
 In particular, Anderson acceleration has been demonstrated to be very effective for the considered
 Picard-type methods.
 Finally, the Fixed-Stress-Newton scheme combined with Anderson acceleration provides a robust 
 linearization scheme, meeting the above criteria.

\end{abstract}

  \begin{keyword} 
    Nonlinear poroelasticity \sep 
    Partially saturated porous media \sep 
    Iterative coupling \sep 
    L-scheme \sep 
    Fixed-stress splitting \sep 
    Anderson acceleration
  \end{keyword}
  
\end{frontmatter}

\section{Introduction}

The coupling of fluid flow and mechanical deformation in unsaturated porous media is relevant
for many applications ranging from modeling rainfall-induced land subsidence or levee failure
to understanding the swelling and drying-shrinkage of wooden or cement-based materials. 
Assuming linear elastic behavior, the process can be modeled by coupling the Richards 
equations with quasi-static linear elasticity equations, generalizing the classical Biot 
equations~\cite{Biot1941}. In this work, we consider utilize the equivalent pore pressure~\cite{Coussy2004},
which allows a thermodynamically stable formulation~\cite{Kim2013}.

For the numerical simulation of large scale applications, the solution of linear problems is typically
the computationally most expensive component. For nonlinear problems, the dominating 
cost is determined by both the chosen linearization scheme and solver technology. In particular,
the choice of a linearization scheme defines the requirements for the solver technology. 
Commonly, Newton's method is the first choice linearization scheme. However, for the nonlinear Biot 
equations, as monolithic solver, Newton's method requires the solver technology to solve saddle point problems coupling
mechanics and flow equations. Additionally, in practice, constitutive laws employed in the model might be 
not Lipschitz continuous~\cite{vanGenuchten1980}. Thus, the arising systems are ill-conditioned
and require an advanced, monolithic simulator. As the latter might be not available, the goal of
this work is to develop a linearization scheme, which is robust and allows the use of decoupled 
simulators for mechanics and flow equations. For this purpose, we adopt closely related concepts 
for the linear Biot equations and the Richards equations.

For the numerical solution of the linear Biot equations, splitting schemes are widely used; either
as iterative solvers~\cite{Settari1998} or as preconditioners~\cite{White2011}. In particular, the 
fixed-stress splitting scheme has aroused much interest, being unconditionally stable in the sense
of a von Neumann analysis~\cite{Kim2009} and a global contraction~\cite{Mikelic2013,Both2017,Bause2017}. 
\new{As iterative solver, the scheme has been extended in various ways; e.g., it can be rewritten to a 
parallel-in-time solver~\cite{Borregales2018}, a multi-scale version allowing separate grids
for the mechanics and flow problem has been developed~\cite{Dana2018}, and the concept has been
extended to nonlinear multi-phase flow coupled with linear elasticity~\cite{Kim2013}.}
In the context of monolithic solvers, the scheme has been applied as preconditioner for Krylov 
subspace methods~\cite{Castelleto2015,Castelleto2016,White2016,Adler2017} and as smoother for 
multigrid methods~\cite{Gaspar2017}. All in all, the scheme defines a promising
strategy to decouple mechanics and flow equations.

\new{For the linearization of the Richards equation, the standard Newton method has to be used with care,
since the Richards equation is a degenerate elliptic-parabolic equation, modeling saturated/unsaturated flow, 
and additionally material laws might be H\"older continuous. Various problem-specific alternatives have been developed 
in the literature. We want to point out two particular, simple linearization schemes; the L-scheme and the Modified Picard method.}
The L-scheme~\cite{Slodicka2002}, employs diagonal stabilization for monotone, Lipschitz
continuous nonlinearities. Global convergence has been rigorously proven for several porous media 
applications~\cite{Pop2004,Radu2014,Radu2017}; in particular also for the Richards equation~\cite{List2016}.
\new{The L-scheme can be also applied for H\"older continuous problems~\cite{Radu2017,Both2018b}.
Furthermore, for the Richards equations it can be used to define a robust, linear domain decomposition method~\cite{Seus2018}.}
The L-scheme linearization has been coupled with the fixed-stress splitting scheme for nonlinear Biot's equations with linear
coupling~\cite{Borregales2017}.  Less robust, but in some cases more efficient is the Modified Picard
method~\cite{Celia1990}, which employs the first order Taylor approximation for the saturation, 
still allowing for a H\"older continuous permeability.

In this paper, we combine the fixed-stress splitting scheme with the L-scheme, the Modified Picard 
method and Newton's method. The resulting schemes decouple and linearize simultaneously the mechanics
and flow equations, utilizing only a single loop and allowing for separate simulators. We show theoretically linear convergence of 
the Fixed-Stress-L-scheme, assuming non-vanishing residual saturation, permeability and porosity. 
However, the theoretical convergence rate might deteriorate in unfavorable situations, leading to 
either slow convergence or even stagnation in practice. As remedy we apply Anderson acceleration.

Anderson acceleration, originally introduced by~\cite{Anderson1965}, in order to accelerate fixed 
point iterations in electronic structure computation, has been successfully applied in various 
other fields; in particular for the modified Picard iteration~\cite{Lott2012}. Reusing previous 
iterations to approximate directional derivatives it can be related to a multi-secant quasi-Newton
method~\cite{Fang2009} and to preconditioned GMRES for linear problems~\cite{Walker2011}. For 
nonlinear problems, it naturally generalizes to a preconditioned nonlinear GMRES. Being a 
post-processing, it can be combined with splitting methods, still allowing separate simulators 
unlike preconditioned monolithic solvers. So far, theoretical results guarantee only convergence 
for contractive fixed-point iterations~\cite{Toth2015}. Furthermore, those results do not guarantee 
actual acceleration. Based on a special case, we justify theoretically the ability of the Anderson
acceleration to effectively accelerate convergence of contractive fixed-point iterations and 
moreover stabilize non-contractive fixed-point iterations. Applying Anderson acceleration 
for diverging methods might allow convergence after all. To our knowledge, this is the first 
theoretical indication of this kind.  
Other stabilization techniques could be applied as adaptive step size control, 
adaptive time stepping or the combination of a Picard-type method with a Newton-type method, 
following ideas by~\cite{Paniconi1994}. These concepts have not been considered in the scope 
of this work.

We present numerical results confirming the theoretical findings of this work. Indeed, the 
Fixed-Stress-L-scheme is more robust than the modifications employing Newton's method and 
the Modified Picard method. Moreover, convergence of the Picard-type methods can be 
significantly accelerated by the Anderson acceleration. When applied to initially diverging
methods, convergence can be reliably retained.

The main, new contributions of this work are:
\begin{itemize}
 
 \item We propose three linearization schemes incorporating the fixed-stress splitting scheme, 
 coupled with an L-scheme, Modified Picard and Newton linearization of the flow. All schemes 
 allow the efficient and robust decoupling of mechanics and flow equations. For the simplest 
 scheme, the Fixed-Stress-L-scheme, we show theoretical convergence, assuming non-vanishing 
 residual saturation, permeability and porosity, cf.\ Theorem~\ref{theorem:convergence:fsl}.

 \item Based on a special case, we justify theoretically the general ability of the Anderson 
 acceleration to effectively accelerate convergence and stabilize the underlying scheme, allowing
 even non-contractive fixed-point iterations to converge, cf.\ Section~\ref{section:linear-analysis:restarted-aa:discussion}.

 \item The combination of the proposed linearization schemes and Anderson acceleration is 
 demonstrated numerically to be robust and efficient. In particular, Anderson acceleration 
 allows the schemes to converge in challenging situations. The Fixed-Stress-Newton method 
 coupled with Anderson acceleration shows best performance among the splitting schemes,
 cf.\ Section~\ref{section:numerical-results}.
  
\end{itemize}

\noindent
The paper is organized as follows. In Section~\ref{section:model}, the mathematical model for the nonlinear
Biot equations is explained. In Section~\ref{section:discretization}, a three-field discretization is introduced,
employing linear Galerkin finite elements and mixed finite element for the mechanics and flow equations, respectively.
In Section~\ref{section:linearization}, we recall the monolithic Newton method and 
introduce three splitting schemes,
simultaneously linearizing and decoupling the mechanics and flow equations. 
In Section~\ref{section:fsl:theory}, convergence is proved for the Fixed-Stress-L-scheme. 
In Section~\ref{section:acceleration}, Anderson acceleration is recalled, and in 
Section~\ref{section:linear-analysis:restarted-aa}, the ability of the Anderson acceleration 
to effectively accelerate convergence and increase robustness is discussed theoretically.
In Section~\ref{section:numerical-results}, numerical results are presented, 
illustrating in particular the increase of robustness via Anderson acceleration.
The work is closed with concluding remarks in Section~\ref{section:concluding-remarks}.

\section{Mathematical model -- Nonlinear Biot's equations coupling Richards equation and linear elasticity}\label{section:model}
We consider a nonlinear extension of the classical, linear Biot equations modeling
flow in deformable porous media under possibly both fully and partially saturated conditions.
Further more we assume:

\begin{itemize}

 \item[(A1)] The bulk material is linearly elastic and deforms solely under infinitesimal deformations.
 
 \item[(A2)] There exists two fluid phases -- one active and one passive phase (standard assumption
 for the Richards equation).
 
 \item[(A3)] The active fluid phase is incompressible and corresponding fluxes are described by
 Darcy's law.
 
 \item[(A4)] Mechanical inertia effects are negligible allowing to consider the quasi-static 
 balance of linear momentum.
 
\end{itemize}

\noindent
We model the medium at initial conditions by a reference 
configuration $\Omega\subset\mathbb{R}^{d}$, $d\in\{2,3\}$. Due to the limitation 
to infinitesimal deformations, the domain of the primary fields is approximated by 
$\Omega$ on the entire time interval of interest $(0,T)$, with final time $T>0$.

Finally, the governing equations describing coupled fluid flow and mechanical deformation of a 
porous medium with mechanical displacement~$\u$, fluid pressure~$\p$ and volumetric flux~$\q$ as primary 
variables are given by
\begin{align}
  \partial_t \left( \phi \s \right) + \DIV \q &\= 0, \label{model:p}\\
  \q + \permeability( \s ) \left( \GRAD \p - \rhow \g \right) &\= \bm{0}, \label{model:q}\\
  -\DIV \left[2\mu \eps{\u} + \lambda \DIV \u \I  - \alpha \ppore(\p) \I \right] &\= \rhob \g, \label{model:u}
\end{align}
where $\phi$ denotes variable porosity, $\s$ denotes fluid saturation,
$\permeability$ denotes fluid-dependent mobility, $\rhow$ and $\rhob$ denote fluid and bulk density,
respectively, $\g$ is the gravitational acceleration,
$\eps{\u}$ and $\DIV \u$ denote the linear strain and the volumetric deformation, respectively,
$\mu$ and $\lambda$ denote the Lam\'e parameters, $\alpha$ is the Biot coefficient
and $\ppore$ denotes the pore pressure.
In the following, we comment briefly on the single components of the mathematical model and 
refer to~\cite{Coussy2004} for a detailed derivation.

\begin{itemize}

 \item[Eq.~\eqref{model:p}:] 
 For the fluid flow, an active and a passive fluid phase are assumed. In other words,
 the passive phase responds instantaneously to the active phase and therefore has a 
 constant pressure. The behavior of the active fluid phase is governed by mass conservation, identical
 to volume conservation for an incompressible fluid. The volume is given by the product
 of porosity $\phi$ and saturation $\s$. The porosity changes linearly with volumetric 
 deformation $\DIV \u$ and pore pressure $\ppore$ by
 \begin{align} \label{model:porosity}
  \phi(\u,\p) \= \phi_0 + \alpha \DIV (\u - \uzero) + \frac{1}{N} (\ppore(\p) - \ppore(\pzero)),
 \end{align}
 where $\phi_0$, $\uzero$ and $\pzero$ are the initial porosity, displacement and pressure, respectively,
 and $\alpha$ is the Biot coefficient and $N$ is the Biot modulus. Eq.~\eqref{model:porosity} is a byproduct of 
 the thermodynamic derivation of the effective stress by Coussy~\cite{Coussy2004}.
 Furthermore, the saturation $\s=\s(p)$ is assumed to be described by a material law 
 $\s:\mathbb{R}\rightarrow(0,1]$, satisfying $\s(p)=1$, $p\geq0$, and having a negative 
 inverse $\pc : (0,1]\rightarrow \mathbb{R}_{+}$, satisfying $\s(-\pc(s))=s$, $s\in(0,1]$. 
 In the literature, the function $\pc$ is often referred to as capillary pressure.

 \item[Eq.~\eqref{model:q}] The volumetric flux $\q$ is assumed to be described by Darcy's
 law for multiphase flow. Here, the permeability scaled by viscosity is 
 given by a material law $\permeability=\permeability(\s)$. 
 In practice, the material laws can become H\"older continuous.

 \item[Eq.~\eqref{model:u}:]
 The mechanical behavior is governed by balance of linear momentum under quasi-static 
 conditions, combined with an effective stress formulation.  Allowing only for small 
 deformations, we employ the St.Venant Kirchhoff model for the effective stress,
 determining the poroelastic stress as
 $\poroelasticstress(\u,\pw) = 2\mu \eps{\u} + \lambda \DIV \u \I - \alpha \ppore(\pw)$.
 As pore pressure, we use the equivalent pore pressure~\cite{Coussy2004}
 \begin{align}\label{model:pore-pressure:definition}
  \ppore(p) \= \s(p) p - \int_{\s(p)}^1 \pc(s) \, ds,
 \end{align}
 which takes into account interfacial effects. By construction it satisfies $d\ppore = \s(p)\,dp$.
 As body force we assume solely gravity, where for the sake of simplicity the bulk density 
 $\rhob$ is assumed to be constant.
 All in all, Eq.~\eqref{model:u} acts as compatibility condition to be satisfied at each time.
  
\end{itemize}

\noindent
Introducing two partitions $\Gamma_D^f\cup\Gamma_N^f = \Gamma_D^m\cup\Gamma_N^m = \partial \Omega$
of the boundary of $\Omega$ and the outer normal $\n$ on $\partial\Omega$, we assume boundary 
conditions and initial conditions 
\begin{align*}
  \p&=p_{w,D} & &\text{on }\Gamma_D^f\times(0,T), 			& \u &= \u_D & & \text{on }\Gamma_D^m\times(0,T), \\
  \q\cdot \n &= \vec{q}_{w,N} & & \text{on }\Gamma_N^f\times(0,T),      & \poroelasticstress(\u,\pw)\n &= \poroelasticstress_n & & \text{on }\Gamma_N^m\times(0,T), \\
  \p&=\pzero & &\text{in }\Omega\times\{0\}, 			        & \u& =\uzero & &\text{in }\Omega\times\{0\}.
\end{align*}

\noindent
All in all, the nonlinear Biot equations~\eqref{model:p}--\eqref{model:u} couple nonlinearly 
the Richards equation and linear elasticity equations. In the fully saturated 
regime ($\p\geq0$), the model reduces locally to the classical, linear Biot equations for 
an incompressible fluid and compressible rock. We note, as long as the fluid saturation is not vanishing,
the nonlinear Biot equations~\eqref{model:p}--\eqref{model:u} are parabolic, unlike the 
degenerate elliptic-parabolic Richards equation, cf.\ Remark~\ref{remark:model:properties}.

\section{Finite element discretization}\label{section:discretization}

We discretize the Biot equations~\eqref{model:p}--\eqref{model:u} in space and time 
by the finite element method and the implicit Euler method, respectively.
More precisely, given a regular triangulation $\mathcal{T}_h$ of the domain $\Omega$,
we employ linear, constant and lowest order Raviart-Thomas finite elements to approximate displacement, 
pressure and volumetric flux, respectively. For the sake of simplicity, we assume zero boundary 
conditions on $\partial\Omega=\Gamma_D^m=\Gamma_D^f$. The corresponding discrete function spaces are then given by
\begin{align*}
  \Qh &\= \Bigl\{ \testph \in L^2(\Omega) 		 \, \Bigm| \, \forall T \in \mathcal{T}_h, \testph|_T \in {{\mathbb{P}}_0}       \Bigr\}, \\
  \Zh &\= \Bigl\{ \testqh \in {H(\mathrm{div}; \Omega)}  \, \Bigm| \, \forall T \in \mathcal{T}_h, \testqh|_T (\vec{x})=\vec{a}+b \vec{x},\ \vec{a} \in  \mathbb{R}^d, b \in \mathbb{R} \Bigr\}, \\
  \Vh &\= \Bigl\{ \testuh \in {[H^1_0(\Omega)]}^d 	 \, \Bigm| \, \forall T \in \mathcal{T}_h, \testuh|_T \in {[{\mathbb{P}}_1]}^d   \Bigr\},
\end{align*}
where ${\mathbb{P}}_0$ and ${\mathbb{P}}_1$ denote the spaces of scalar piecewise constant and
piecewise (bi-)linear functions, respectively, and elements of $\Vh$ are zero on the boundary.
\new{We note, that the chosen discretization is not stable with respect to the full range of material parameters~\cite{Rodrigo2017}, e.g., for very small permeability.
However, the specific choice of the finite element spaces is not essential for the further}
discussion of the linearization. Furthermore, for the temporal discretization we employ a 
partition $\{t^n\}_n$ of the time interval $(0,T)$ with (constant) time step size $\tau= t^n - t^{n-1} > 0$.

Then given initial data $(p,\vec{u})_h^0\in\Qh\times\Vh$, at each time step $n\geq1$, the discrete problem reads: 
Given $\pquh{n-1}\in\QZVh$, find $\pquh{n}\in\QZVh$, satisfying for all $\testpquh\in \QZVh$
\begin{align}
 \langle \phi^{n-1} (\s^n  - \s^{n-1}), \testph \rangle + \alpha \langle \s^n \DIV (\uh^n - \uh^{n-1}), \testph \rangle
+ \tfrac{1}{N} \langle\s^n ( \ppore^n - \ppore^{n-1}) , \testph \rangle  + \tau \langle \DIV\qh^n, \testph \rangle &\= 0,                                                                  \label{discretization:p}\\
 \langle \permeability(\s^n)^{-1} \qh^n, \testqh \rangle  - \langle \ph^n, \DIV \testqh \rangle &\= \langle \rhow \g, \testqh \rangle,                                                  \label{discretization:q} \\
 2\mu\langle \eps{\uh^n}, \eps{\testuh} \rangle + \lambda \langle \DIV \uh^n, \DIV \testuh \rangle - \alpha \langle \ppore^n, \DIV \testuh \rangle &\= \langle \rhob \g, \testuh \rangle,     \label{discretization:u} 
\end{align}
where $\s^k = \s(\ph^k)$ and $\ppore^k=\ppore(\ph^k)$, $k\in\{n-1,n\}$ and $\phi^{n-1}=\phi(\uh^{n-1},\ph^{n-1})$.
Here, $\langle \cdot, \cdot \rangle$ denotes the standard $L^2(\Omega)$ scalar product.

\begin{remark}[Volume conservation]
 The discretization~\eqref{discretization:p}--\eqref{discretization:u} is volume-conservative as 
 by Eq.~\eqref{model:porosity} it holds
 \begin{align*}
  \phi^n\s^n - \phi^{n-1}\s^{n-1}
  &\= \phi^{n-1} (\s^n -\s^{n-1}) +\s^n \left( \alpha \DIV (\u^n - \u^{n-1}) + \frac{1}{N} (\ppore^n - \ppore^{n-1}) \right).
 \end{align*}
\end{remark}


\section{Monolithic and decoupled linearization schemes}\label{section:linearization}

In the following, we consider four linearization schemes. First, we apply the monolithic 
Newton method, being commonly the first choice when linearizing a nonlinear problem. 
Second, we propose a linearization scheme, which employs constant diagonal stabilization 
in order to linearize and decouple simultaneously the mechanics and flow equations. 
Furthermore, we introduce two modifications of the latter method, utilizing both 
decoupling and first order Taylor approximations. For direct comparison, we formulate 
all schemes in incremental form.

\paragraph{Monolithic schemes vs.\ iterative operator splitting schemes for saddle point problems}
Biot's equations yield a saddle point problem, which in general are difficult to solve.
Containing more information on coupling terms, a monolithic scheme for this purpose 
is per se more stable, whereas for iterative operator splitting schemes, stability 
is always an issue necessary to be checked. However, in contrast to robust 
splitting schemes, for a monolithic scheme a fully coupled simulator with advanced solver 
technology is required. For that, one possibility is to apply a splitting scheme
as either an iterative solver or a preconditioner for a Krylov subspace method.
The latter is more efficient and robust, cf.~e.g.~\cite{Castelleto2015} for the 
linear Biot equations. In case only separate simulators are available, the concept 
of preconditioning the coupled problem cannot be applied in the same sense. But we note that 
acceleration techniques as Anderson acceleration can be applied as post-processing to the 
iterative splitting schemes, acting as preconditioned nonlinear GMRES solvers applied 
to the coupled problem, cf.\ Section~\ref{section:acceleration}.

\subsection{Notation of residuals}

For the incremental formulation of the linearization schemes, we introduce naturally 
defined residuals of the coupled problem~\eqref{model:p}--\eqref{model:u}.
Given data $\pquh{n-1}\in\QZVh$ for time step $n-1$, the residuals at time step $n$ evaluated 
at some state $\pquh{}\in\QZVh$ and tested with $\testpquh\in\QZVh$
are defined by
\begin{align*}
 r_p^n(\pquh{};\testph) &= - \Big( \langle \phi^{n-1} (\s(\ph)  - \s^{n-1}), \testph \rangle + \alpha \langle \s(\ph) \DIV (\uh - \uh^{n-1}), \testph \rangle  \\
 &\quad\qquad + \frac{1}{N} \langle \s(\ph) ( \ppore(\ph) - \ppore^{n-1}), \testph \rangle + \tau \langle \DIV\qh, \testph \rangle \Big), \\
 r_q^n(\pquh{};\testqh) &= \langle \rhow \g, \testqh \rangle - \Big(\langle \permeability(\s(\ph))^{-1} \qh, \testqh \rangle  - \langle \ph, \DIV \testqh \rangle\Big), \\
 r_u^n(\pquh{};\testuh) &= \langle \rhob \g, \testuh \rangle - \Big( 2\mu\langle \eps{\uh}, \eps{\testuh} \rangle + \lambda \langle \DIV \uh, \DIV \testuh \rangle - \alpha \langle \ppore(\ph) , \DIV \testuh \rangle \Big).
\end{align*}
Shorter, given a sequence of approximations $\pquh{n,i}\in\QZVh$, $i\in\mathbb{N}$, of $\pquh{n}\in\QZVh$, we define
\begin{align*}
 r_p^{n,i}(\testph) &\= r_p^n(\pquh{n,i};\testph), \\
 r_q^{n,i}(\testqh) &\= r_q^n(\pquh{n,i};\testqh), \\
 r_u^{n,i}(\testuh) &\= r_u^n(\pquh{n,i};\testuh), \\
 r_u^{n,i/i-1}(\testuh) &\= r_u^n(\pqh{n,i},\un{i-1};\testuh).
\end{align*}

\subsection{Monolithic Newton's method}

We apply the standard Newton method linearizing the coupled problem~\eqref{discretization:p}--\eqref{discretization:u} 
in a monolithic fashion.

\paragraph{Scheme}
The monolithic Newton method reads: 
For each time step, given the initial guess $\pquh{n,0}=\pquh{n-1}$, loop over the iterations $i\in\mathbb{N}$ until convergence is reached. 
Given data at the previous time step $n-1$ and iteration $i-1$, find the increments
$\dpqun{i}\in\QZVh$, satisfying the coupled, linear problem, for all $\testpquh\in\QZVh$,
\begin{align}
  \left\langle \left(\phi^{i-1} \frac{\partial \s}{\partial \pw}(\pn{i-1}) + \frac{1}{N} (\sn{i-1})^2\right) \dpn{i}, \testph \right\rangle 
  + \alpha \langle \sn{i-1} \DIV \dun{i}, \testph \rangle
  + \tau \langle \DIV \dqn{i}, \testph \rangle &\= r_p^{n,i-1}(\testph),     								  \label{newton:p} \\
  \langle \permeability(\sn{i-1})^{-1} \dqn{i}, \testqh \rangle  
  + \left\langle \left(\left.\frac{\partial}{\partial \pw}\permeability(\sn{i-1})\right|_{\pn{i-1}}\right)^{-1} \qn{i-1} \dpn{i}, \testqh \right\rangle 
  - \langle \dpn{i}, \DIV \testqh \rangle &\= r_q^{n,i-1}(\testqh),         								  \label{newton:q} \\
  2\mu\langle \eps{\dun{i}}, \eps{\testuh} \rangle + \lambda \langle \DIV \dun{i}, \DIV \testuh \rangle
    - \alpha \langle \sn{i-1} \dpn{i}, \DIV \testuh \rangle &\= r_u^{n,i-1}(\testuh),                                                   \label{newton:u} 
\end{align}
and set
\begin{align*}
  \pquh{n,i} \= \pquh{n,i-1} + \dpquh{n,i}.
\end{align*}
After convergence is reached at iteration $N$, set $\pquh{n}=\pquh{n,N}$.

\paragraph{Properties}
Newton's method is known to be quadratically, locally convergent, which makes the method commonly the first choice linearization
method. However, in general it is not robust and has the following drawbacks:

\begin{itemize}
 
 \item In order to ensure convergence, the time step size has to be chosen sufficiently small depending 
 on the mesh size. Then the initial guess is sufficiently close to the unknown solution.
 
 \item The need for a good initial guess can be relaxed by using step size control, allowing a bigger time 
 step size. Anderson acceleration applied as post-processing can be interpreted as such, for more details, 
 cf. Section~\ref{section:acceleration}.
  
 \item Bounded derivatives of constitutive laws have to be available. In practice, nonlinearities employed in the 
 model~\eqref{model:p}--\eqref{model:u} are not necessarily Lipschitz continuous, e.g., the relative 
 permeability for soils. In particular, in the transition from the partially to the fully saturated
 regime, the derivative of the relative permeability modeled by the van Genuchten model~\cite{vanGenuchten1980} can be unbounded.
 Consequently, the Jacobian might become ill-conditioned.
 
 \item Due to the coupled nature of the problem, the linear system resulting from the problem~\eqref{newton:p}--\eqref{newton:u}
 has a saddle point structure and is therefore ill-conditioned. Hence, an advanced solver architecture is required.
 In the context of Biot's equations, the application of a fixed-stress type solver or preconditioner~\cite{Castelleto2015} yields remedy.
 
\end{itemize}




\subsection{Fixed-Stress-L-scheme -- A Picard-type simultaneous linearization and splitting}\label{section:fsl-scheme}

We propose a novel, robust linearization scheme for Eq.~\eqref{discretization:p}--\eqref{discretization:u}.
It is essentially a simultaneous application of the L-scheme linearization for the Richards equation, cf.~e.g.~\cite{List2016},
and the fixed-stress splitting scheme for the linear Biot equations, cf.~e.g.~\cite{Kim2009}, both utilizing diagonal stabilization.
In the following, we refer to the scheme as Fixed-Stress-L-scheme (FSL).
As derived in Section~\ref{section:fsl:theory}, it can be interpreted as L-scheme linearization of the nonlinear Biot
equations reduced to a pure pressure formulation or alternatively as nonlinear Gauss-Seidel-type solver, 
consisting of cheap iterations allowing separate, sophisticated simulators for the mechanical and flow subproblems.
In Section~\ref{section:fsl:theory}, we show convergence of the Fixed-Stress-L-scheme under physical assumptions.
Before defining the Fixed-Stress-L-scheme, we recall main ideas of both the L-scheme and the fixed-stress splitting scheme.

\paragraph{Main ideas of the L-scheme}
The L-scheme is an inexact Newton's method, employing constant linearization for monotone
and Lipschitz continuous terms. For remaining contributions Picard-type linearization is applied.
Effectively, this approach is identical with applying a Picard iteration with
diagonal stabilization. All in all, no explicit derivatives are required at the price
that only linear convergence can be expected. Under mild conditions, this concept has
been rigorously proven to be globally convergent for various porous media 
applications, e.g.,~\cite{List2016,Radu2014,Radu2017,Borregales2017}. Moreover, as pointed out
by~\cite{List2016}, the resulting linear problem is expected to be significantly better
conditioned than the corresponding linear problem obtained by Newton's method.

Regarding the nonlinear Biot equations, the saturation $\s=\s(p)$ is classically non-decreasing 
and Lipschitz continuous. Hence, assuming that $\phi^{i-1}\geq0$ on $\Omega$, the above criteria apply
to the saturation contribution in Eq.~\eqref{newton:p}. The approximation of the saturation at iteration 
$i$ is then given by
\begin{align} \label{linearization:l-scheme:saturation}
  \s^n \= \s(\ph^n) \,\approx\, \s(\pn{i}) \,\approx\, \s(\pn{i-1}) + L (\pn{i} - \pn{i-1}) \= \s(\pn{i-1}) + L \dpn{i},
\end{align}
where $L\in\mathbb{R}_+$ is a sufficiently large tuning parameter, usually set equal to the Lipschitz 
constant $\LipschitzS$ of $\s$. As coupling terms are not monotone, Picard-type linearization is applied
to the remaining contributions of Eq.~\eqref{discretization:p}--\eqref{discretization:u}.

\paragraph{Main ideas of the fixed-stress splitting scheme}
Considering the linear Biot equations, their linearization results in a saddle point problem, 
thus, requiring an advanced solver technology for efficient solution. 
For this purpose, physically motivated, robust, iterative splitting schemes are widely-used 
as, e.g., the fixed-stress splitting scheme, originally introduced by~\cite{Settari1998}.
As it decouples mechanics and flow equations, separate simulators can be utilized 
for both subproblems, reducing the complexity to solving simpler, better conditioned problems. 
The robust decoupling is accomplished via sufficient diagonal stabilization, 
introducing a tuning parameter $\beta_\mathrm{FS}$. Its optimization with guaranteed convergence
is a research question on its own~\cite{Mikelic2013,Both2017,Both2018}. Concepts can be also extended to
multiphase flow coupled with linear elasticity~\cite{Kim2013}.

\paragraph{Scheme}
We observe that applying the monolithic L-scheme, as just explained to the nonlinear, discrete
Biot equations~\eqref{discretization:p}--\eqref{discretization:u}, results in a linear problem
equivalent with that for single phase flow in heterogeneous media, for which the fixed-stress
splitting scheme is an attractive solver~\cite{Both2017}. Both schemes are realized via diagonal
stabilization. Anticipating the dynamics to be mainly governed by the flow problem, cf.\ Assumption~(A4), and the 
mechanics problem to be much simpler, a simultaneous application of the L-scheme and the 
fixed-stress splitting scheme yields an attractive linearization scheme incorporating the decoupling
of flow and mechanics equations. 

Written as iterative scheme in incremental form, the resulting Fixed-Stress-L-scheme reads:
For each time step, given the initial guess $\pquh{n,0}=\pquh{n-1}$, loop over the iterations $i\in\mathbb{N}$ until 
convergence is reached. For each iteration $i$, perform two steps:
\newline

\begin{indentblock}
\textit{1. Step:} Set $L=\LipschitzS$, the Lipschitz constant of $\s$, and 
$\beta_\mathrm{FS}=\alpha^2 / \left(\tfrac{2\mu}{d} + \lambda\right)$. 
Given $\pquh{n,i-1},\pquh{n-1}\in\QZVh$, find the increments $\dpqh{n,i}\in\QZh$, satisfying, for all $\testpqh\in\QZh$,
\end{indentblock}
\begin{align}
  \left\langle \left(L + \tfrac{1}{N} + \beta_\mathrm{FS}\right) \dpn{i}, \testph \right\rangle + \tau \langle \DIV \dqn{i}, \testph \rangle 	&\= r_p^{n,i-1}(\testph), 	\label{fsl:p} \\
  \langle \permeability(\sn{i-1})^{-1} \dqn{i}, \testqh \rangle  - \langle \dpn{i}, \DIV \testqh \rangle 				                &\= r_q^{n,i-1}(\testqh),       \label{fsl:q}
\end{align}
\begin{indentblock}
and set
\end{indentblock}
\begin{align*}
 \pqh{n,i} \= \pqh{n,i-1} + \dpqh{n,i}.
\end{align*}

\vspace{0.1cm}

\begin{indentblock}
\textit{2. Step:} 
Given $(\pqh{n,i},\un{i-1})\in\QZVh$,
find the increment $\dun{i}\in\Vh$, satisfying, for all $\testuh\in\Vh$,
\end{indentblock}
\begin{align}
  2\mu\langle \eps{\dun{i}}, \eps{\testuh} \rangle + \lambda \langle \DIV \dun{i}, \DIV \testuh \rangle &\= r_u^{n,i/i-1}(\testuh), \label{fsl:u} 
\end{align}
\begin{indentblock}
and set
\end{indentblock}
\begin{align*}
  \un{i} &\= \un{i-1} + \dun{i}.
\end{align*}
After convergence is reached at iteration $N$, set $\pquh{n}=\pquh{n,N}$.

\paragraph{Properties}
The Fixed-Stress-L-scheme inherits its properties from the underlying methods.
It does not require the evaluation of any derivatives, increasing the speed of the assembly process.
It is very robust but guarantees only linear convergence, as shown in 
Section~\ref{section:fsl:theory}, cf.\ Theorem~\ref{theorem:convergence:fsl}.
Furthermore, the Fixed-Stress-L-scheme requires solely the solution of the mechanical and flow problem, 
allowing separate simulators. In particular, the overall method utilizes a single loop in 
contrast to the Newton's method combined with a fixed-stress splitting scheme as iterative
solver.

\subsection{Quasi-Newton modifications of the Fixed-Stress-L-scheme}\label{section:fs-mp-newton}

The Fixed-Stress-L-scheme employs constant linearization for the fluid volume $\phi\s$ with respect to fluid pressure,
utilizing an upper bound for the Lipschitz constant. In many practical situations, this approach is quite
pessimistic. Recalling the assumption that the flow problem dominates the dynamics of the system,
we expect the simultaneous application of the fixed-stress splitting scheme and more sophisticated 
flow linearizations to be only slightly less robust than the Fixed-Stress-L-scheme. Independent of the 
flow linearization, diagonal stabilization is added by the splitting scheme increasing the robustness.
In the following, based on the derivation of the Fixed-Stress-L-scheme in Section~\ref{section:fsl:theory}, 
cf.\ Remark~\ref{remark:choice-l}, we couple simultaneously a modified Picard method~\cite{Celia1990} 
and Newton's method with the fixed-stress splitting scheme yielding the Fixed-Stress-Modified-Picard 
method and the Fixed-Stress-Newton method, respectively. The modified Picard method, in particular, is a 
widely-used linearization scheme for the Richards equation and hence rises interest for its use for the
linearization of the discrete, nonlinear Biot equations~\eqref{discretization:p}--\eqref{discretization:u}.

\paragraph{Fixed-Stress-Modified-Picard method}
Applied to the Richards equations, the modified Picard method employs a first order Taylor approximation as
linearization for the saturation and a Picard-type linearization for the possibly H\"older continuous 
permeability. By employing a first order approximation of the fluid volume $\phi\s$ with respect to 
fluid pressure instead, and by coupling simultaneously with the fixed-stress splitting scheme, we 
obtain a linearization scheme for Eq.~\eqref{discretization:p}--\eqref{discretization:u}. For later
reference, we denote the resulting scheme by Fixed-Stress-Modified-Picard-scheme. It is essentially identical
with the Fixed-Stress-L-scheme but with modified first fixed-stress step (1.~Step). We exchange 
Eq.~\eqref{fsl:p}--\eqref{fsl:q} with
\begin{align}
  \left\langle \left(\phi^{i-1} \frac{\partial \s}{\partial \pw}(\pn{i-1}) 
  + \left( \tfrac{1}{N} + \beta_\mathrm{FS}\right) (\sn{i-1})^2\right)  \dpn{i}, \testph \right\rangle 
  + \tau \langle \DIV \dqn{i}, \testph \rangle 								        &\= r_p^{n,i-1}(\testph), 	\label{fsmp:p}\\
  \langle \permeability(\sn{i-1})^{-1} \dqn{i}, \testqh \rangle  - \langle \dpn{i}, \DIV \testqh \rangle 	&\= r_q^{n,i-1}(\testqh).    \label{fsmp:q}
\end{align}

\paragraph{Fixed-Stress-Newton method}
In case the permeability is Lipschitz continuous, the simultaneous application of the fixed-stress
splitting scheme and linearization of the flow equations via Newton's method yields an attractive
linearization scheme for Eq.~\eqref{discretization:p}--\eqref{discretization:u}. For later reference, we denote 
the resulting scheme by Fixed-Stress-Newton method. It is essentially identical with the Fixed-Stress-L-scheme but 
with modified first fixed-stress step (1.~Step). We exchange Eq.~\eqref{fsl:p}--\eqref{fsl:q} with
\begin{align}
  \left\langle \left(\phi^{i-1} \frac{\partial \s}{\partial \pw}(\pn{i-1}) 
  + \left( \tfrac{1}{N} + \beta_\mathrm{FS}\right) (\sn{i-1})^2\right)  \dpn{i}, \testph \right\rangle 
  + \tau \langle \DIV \dqn{i}, \testph \rangle 						&\= r_p^{n,i-1}(\testph),    \label{fsn:p}\\
  \langle \permeability(\sn{i-1})^{-1} \dqn{i}, \testqh \rangle  
  + \left\langle \left(\left.\frac{\partial}{\partial \pw}\permeability(\sn{i-1})\right|_{\pn{i-1}}\right)^{-1} \qn{i-1} \dpn{i}, \testqh \right\rangle 
  - \langle \dpn{i}, \DIV \testqh \rangle 						&\= r_q^{n,i-1}(\testqh).    \label{fsn:q}
\end{align}
We note that the Fixed-Stress-Newton method is also closely related to applying a single fixed-stress iteration 
as inexact solver for the linear problem~\eqref{newton:p}--\eqref{newton:u} arising from Newton's method.

\subsection{$L^2(\Omega)$-type stopping criterion}
For the numerical examples in Section~\ref{section:numerical-results}, we employ a combination of an 
absolute and a relative $L^2(\Omega)$-type stopping criterion, closely related to the standard algebraic $l^2$-type 
criterion. Given tolerances $\varepsilon_\mathrm{a},\ \varepsilon_\mathrm{r}\in\mathbb{R}_+$, we denote an 
iteration as converged if it holds
\begin{align*}
 \| \incph^{n,i} \|_{L^2(\Omega)} + \| \incqh^{n,i} \|_{L^2(\Omega)} + \| \incuh^{n,i} \|_{L^2(\Omega)} < \varepsilon_\mathrm{a},
 \qquad \text{and} \qquad 
 \frac{\| \incph^{n,i} \|_{L^2(\Omega)}}{ \| p_h^{n,i} \|_{L^2(\Omega)}} 
 + \frac{\| \incqh^{n,i} \|_{L^2(\Omega)}}{ \| \vec{q}_h^{n,i} \|_{L^2(\Omega)}} 
 + \frac{\| \incuh^{n,i} \|_{L^2(\Omega)}}{ \| \vec{u}_h^{n,i} \|_{L^2(\Omega)}}
 < \varepsilon_\mathrm{r}. 
\end{align*}

\section{Convergence theory for simultaneous linearization and splitting via the L-scheme}\label{section:fsl:theory}

In the following, we show convergence of the Fixed-Stress-L-scheme~\eqref{fsl:p}--\eqref{fsl:u} under mild, physical assumptions.
For this purpose, we formulate the nonlinear discrete problem~\eqref{discretization:p}--\eqref{discretization:u}
as an algebraic problem, reduce the problem to a pure pressure problem by exact inversion and apply the L-scheme as linearization identical to the 
Fixed-Stress-L-scheme~\eqref{fsl:p}--\eqref{fsl:u}. Convergence follows then from an abstract convergence result.
For simplicity, we assume vanishing initial data and a homogeneous and isotropic material.

\paragraph{Algebraic formulation of the nonlinear, discrete Biot equations}

Given finite element bases for $\QZVh$, the nonlinear, discrete Biot 
equations~\eqref{discretization:p}--\eqref{discretization:u} translate to the algebraic equations
\begin{align}
 \MassSaturation(\pV{}) \left( \phiV + \alpha \DivPU \uV{} + \tfrac{1}{N} \MassMatrixP{}\peV{}(\pV{}) \right) + \tau \DivPQ \qV{} 	&= \rhsVPressure   \label{algebraic:nonlinear:three-field:p} \\
 \MassPermeability(\pV{})^{-1}\qV{} - \DivPQ^\top \pV{}                                							&= \rhsVFlux       \label{algebraic:nonlinear:three-field:q} \\
 \StiffnessMechanics \uV{} - \alpha\DivPU^\top \peV{}(\pV{})                         							&= \rhsVMechanics.  \label{algebraic:nonlinear:three-field:u}
\end{align}
%
We omit the detailed definition of the finite element matrices and vectors used in 
Eq.~\eqref{algebraic:nonlinear:three-field:p}--\eqref{algebraic:nonlinear:three-field:u},
as they are assembled in a standard way. We comment solely on their origin and their properties 
relevant for further discussion.

\begin{itemize}

 \item Let $\pV{}\in\mathbb{R}^{\NP},\ \qV{}\in\mathbb{R}^{\NQ},\ \uV{}\in\mathbb{R}^{\NU}$ denote the algebraic 
 pressure, volumetric flux and displacement coefficient vectors corresponding to $\pquh{n}\in\QZVh$ with respect
 to the chosen bases. 

 \item Let $\MassMatrixP{}\in\mathbb{R}^{\NP \times \NP}$, $\MassMatrixQ{}\in\mathbb{R}^{\NQ \times \NQ}$ be the natural 
 mass matrices incorporating local mesh information for $\mathcal{T}_h$ such that $\|\pV{}\|_{\MassMatrixP{}}= \|\ph \|_{L^2(\Omega)}$ 
 and $\|\qV{}\|_{\MassMatrixQ{}}= \|\qh \|_{L^2(\Omega)}$ for $\pqh{}\in\QZh$ and corresponding coefficient vectors 
 $\pV{}\in\mathbb{R}^{\NP}$, $\qV{}\in\mathbb{R}^{\NQ}$. In the following, let $\langle \cdot, \cdot \rangle$
 denote the classical $l^2$-vector scalar product $\mathbb{R}^{\star}\times\mathbb{R}^{\star}\rightarrow \mathbb{R}$
 with $\star\in\{\NP,\NQ,\NU\}$. Furthermore, for symmetric, positive definite matrices $\mathbf{M}\in\mathbb{R}^{\star\times\star}$,
 let $\|\cdot \|_\mathbf{M}$ be defined by $\|\mathbf{v} \|_\mathbf{M}^2= \langle \mathbf{M}\mathbf{v},\mathbf{v} \rangle$,
 $\mathbf{v}\in\mathbb{R}^\star$.
  
 \item Let $\MassSaturation:\mathbb{R}^{\NP} \rightarrow \mathbb{R}^{\NP \times \NP}$ denote a
 diagonal matrix with element-wise saturation $\s$ on the diagonal, i.e., $\MassSaturation(\pV{})_{kk}= \s(\pV{}_k)$ 
 for $\pV{}\in\mathbb{R}^{\NP}$, $k\in\{1,...,\NP\}$. 
 
 \item Let $\DivPU\in\mathbb{R}^{\NP \times \NU}$ and $\DivPQ\in\mathbb{R}^{\NP\times \NQ}$ denote the matrices
 corresponding to the divergence operating on displacement and volumetric flux spaces, respectively,
 mapping into the pressure space. Let local mesh information be incorporated, analog to the mass matrix~$\MassMatrixP{}$.
 
 \item Let $\peV:\mathbb{R}^{\NP}\rightarrow\mathbb{R}^{\NP}$ correspond to the element-wise equivalent pore
 pressure $\ppore$. For given $\pV{}\in\mathbb{R}^{\NP}$, each component of $\peV$ is given by
 $\peV(\pV{})_k= \ppore(\pV{}_k)$, $k\in\{1,...,\NP\}$.
 
 \item Let $\phiV\in\mathbb{R}^{\NP}$ denote the initial porosity vector incorporating local mesh information
 such that $\phiV + \alpha \DivPU \uV{} + \tfrac{1}{N} \MassMatrixP{}\peV{}(\pV{})$ 
 corresponds element-wise to the actual porosity of a deformed material.
 
 \item Let $\MassPermeability^{-1}:\mathbb{R}^{\NP}\rightarrow \mathbb{R}^{\NQ\times\NQ}$ denote the volumetric flux 
 mass matrix, weighted by the nonlinear permeability contribution $\permeability^{-1}(\s)$ in Darcy's law,
 and incorporating local mesh information. 
 
 \item Let $\StiffnessMechanics\in\mathbb{R}^{\NU \times \NU}$ denote the stiffness matrix, corresponding to 
 the linear elasticity equations, incorporating local mesh information.

 \item $\rhsVPressure\in\mathbb{R}^{\NP}$, $\rhsVFlux\in\mathbb{R}^{\NQ}$ and $\rhsVMechanics\in\mathbb{R}^{\NU}$
 incorporate solution independent contributions as volume effects and Neumann boundary conditions
 and data at the previous time step. Furthermore, let local mesh information be incorporated.
   
\end{itemize}

\paragraph{Compact formulation of the algebraic problem}

By inverting exactly Eq.~\eqref{algebraic:nonlinear:three-field:q} and Eq.~\eqref{algebraic:nonlinear:three-field:u} 
with respect to $\qV{}$ and $\uV{}$, respectively, and insert into Eq.~\eqref{algebraic:nonlinear:three-field:p},
we obtain the equivalent, reduced problem for $\pV{}$
\begin{align}\label{algebraic:nonlinear:one-field:p}
 \MassSaturation(\pV{}) \left( \phiV + \alpha \DivPU \StiffnessMechanics^{-1} \rhsVMechanics + \alpha^2 \DivPU \StiffnessMechanics^{-1} \DivPU^\top \peV{}(\pV{}) + \tfrac{1}{N} \MassMatrixP{}\peV(\pV{}) \right)
 + \tau \DivPQ \MassPermeability(\pV{}) \rhsVFlux  + \tau \DivPQ \MassPermeability(\pV{})  \DivPQ^\top \pV{} &= \rhsVPressure.
\end{align}

\noindent
By defining
\begin{align}\label{algebraic:nonlinear:phipV:definition}
 \phipV(\pV{}) &= \phiV + \alpha \DivPU \StiffnessMechanics^{-1} \rhsVMechanics + \alpha^2 \DivPU \StiffnessMechanics^{-1} \DivPU^\top \peV{}(\pV{}) + \tfrac{1}{N} \MassMatrixP{}\peV(\pV{}), \\
 \label{algebraic:nonlinear:bV:definition}
 \bV(\pV{})    &= \MassSaturation(\pV{}) \phipV(\pV{}),
 \qquad
 \DM = \DivPQ,
 \qquad
 \KM(\pV{}) = \MassPermeability(\pV{}),
\end{align}
the reduced problem~\eqref{algebraic:nonlinear:one-field:p} can be written in compact form
\begin{align}\label{algebraic:nonlinear:abstract}
 \bV(\pV{}) + \tau \DM \KM(\pV{}) \left(\rhsVFlux + \DM^{\top} \pV{}\right) &= \rhsVPressure.
\end{align}

\paragraph{L-scheme linearization}

We linearize the abstract problem~\eqref{algebraic:nonlinear:abstract} using the L-scheme,
introducing a sequence $\{\pV{i}\}_i\subset\mathbb{R}^{\NP}$ approximating the exact solution $\pV{}\in\mathbb{R}^{\NP}$.
Given a user-defined parameter $L\in\mathbb{R}_+$, we set $\LMatrix= L \MassMatrixP{}$. Then given
an initial guess $\pV{0}\in\mathbb{R}^{\NP}$, the scheme is defined as follows: 
Loop over the iterations $i\in\mathbb{N}$ until convergence is reached. At iteration $i$, 
given data $\pV{i-1}\in\mathbb{R}^{\NP}$, find $\pV{i}\in\mathbb{R}^{\NP}$ solving the linear problem
\begin{align}\label{algebraic:l-scheme:abstract}
 \LMatrix(\pV{i} - \pV{i-1}) + \bV(\pV{i-1}) + \tau \DM \KM(\pV{i-1}) \left(\rhsVFlux + \DM^{\top} \pV{i}\right) &= \rhsVPressure.
\end{align}

\begin{remark}[Equivalence to the Fixed-Stress-L-scheme]\label{remark:algebraic:l-scheme:fsl-scheme}
The L-scheme~\eqref{algebraic:l-scheme:abstract} is equivalent with the Fixed-Stress-L-scheme~\eqref{fsl:p}--\eqref{fsl:u}.
Indeed, exact inversion of Eq.~\eqref{fsl:q} and Eq.~\eqref{fsl:u} with respect to $\qh$ and $\uh$, respectively, insertion
into Eq.~\eqref{fsl:p} yields Eq.~\eqref{algebraic:l-scheme:abstract} after translation into an algebraic context.
The derivation in particular reveals the close connection of the fixed-stress splitting scheme and the L-scheme.
\end{remark}

\begin{lemma}[Convergence of the L-scheme]\label{lemma:abstract:l-scheme:convergence}
 Assume~\eqref{algebraic:nonlinear:abstract} and~\eqref{algebraic:l-scheme:abstract} both have unique solutions
 $\pV{}\in\mathbb{R}^{\NP}$ and $\pV{i}\in\mathbb{R}^{\NP}$, respectively. Furthermore, let the following assumptions 
 be satisfied:

 \begin{itemize} 

 \item[(L1)] There exists a constant $\LipschitzB\in\mathbb{R}_+$ satisfying 
 $\| \bV(\pV{}) - \bV(\tilde{\pV{}}) \|_{\MassMatrixP{-1}}^2 \leq \LipschitzB \langle \bV(\pV{}) - \bV(\tilde{\pV{}}) , \pV{} - \tilde{\pV{}}\rangle$
 for all $\pV{},\tilde{\pV{}}\in\mathbb{R}^{\NP}$, i.e., $\bV$ is in some sense monotonically increasing and Lipschitz continuous.
 
 \item[(L2)] There exist constants $\km,\kM\in\mathbb{R}_+$ satisfying $\km \| \qV{} \|_{\MassMatrixQ{-1}}^2 \leq \langle \KM(\pV{}) \qV{}, \qV{} \rangle \leq \kM \| \qV{} \|_{\MassMatrixQ{-1}}^2$
 for all $\pV{}\in\mathbb{R}^{\NP}$, $\qV{}\in\mathbb{R}^{\NQ}$. 
 Furthermore, there exists a constant $\LipschitzK$ satisfying 
 $\| (\KM(\pV{}) - \KM(\tilde{\pV{}}))\MassMatrixQ{}\|_{\MassMatrixQ{},\infty} \leq \LipschitzK \| \bV(\pV{}) - \bV(\tilde{\pV{}})\|_{\MassMatrixP{-1}}$
 for all $\pV{},\tilde{\pV{}}\in\mathbb{R}^{\NP}$, i.e. $\KM$ is in some sense Lipschitz continuous.
 Here, the subordinate matrix norm $\|\cdot \|_{{\MassMatrixQ{}},\infty}$ is defined
 by $\|\KM\|_{{\MassMatrixQ{}},\infty} = \underset{\qV{}\neq0}{\text{sup}}\,\| \KM \qV{} \|_{\MassMatrixQ{}} / \|\qV{}\|_\infty$, $\KM\in\mathbb{R}^{\NQ\times \NQ}$.
 
 \item[(L3)] There exists a constant $q_\infty\in\mathbb{R}_+$ satisfying $\| {\MassMatrixQ{-1}} \rhsVFlux + \DM^\top \pV{} \|_\infty \leq q_\infty$
 for the solution of problem~\eqref{algebraic:nonlinear:abstract}, i.e., boundedness is satisfied.
  
 \end{itemize}
 
 \noindent
 If the parameter $L$ and the time step size $\tau$ are chosen such that
 $\tfrac{2}{\LipschitzB} - \tfrac{1}{L} - \tau \tfrac{q_\infty^2 \LipschitzK^2}{2\km} \geq 0$, for a fixed constant $C_\Omega>0$, it holds
 \begin{align*}
  \| \pV{i} - \pV{} \|_{\MassMatrixP{}}^2 \leq \frac{L}{L + \tau \km C_\Omega^2} \| \pV{i-1} - \pV{} \|_{\MassMatrixP{}}^2.
 \end{align*}
\end{lemma}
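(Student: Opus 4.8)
The plan is to derive a testing equation for the error $\e{i} := \pV{i} - \pV{}$ by subtracting the exact problem~\eqref{algebraic:nonlinear:abstract} from the L-scheme iteration~\eqref{algebraic:l-scheme:abstract}, and then to test it against $\e{i}$ in the $l^2$-scalar product. Writing the difference out, I get
\begin{align*}
 \LMatrix(\e{i} - \e{i-1}) + \bV(\pV{i-1}) - \bV(\pV{}) + \tau \DM\bigl(\KM(\pV{i-1}) - \KM(\pV{})\bigr)\bigl(\rhsVFlux + \DM^\top \pV{}\bigr) + \tau \DM \KM(\pV{i-1}) \DM^\top \e{i} &= \nullV.
\end{align*}
Testing with $\e{i}$, the first term produces $\langle \LMatrix \e{i}, \e{i}\rangle - \langle \LMatrix \e{i-1}, \e{i}\rangle$, to be handled by the polarization identity $\langle \LMatrix(\e{i}-\e{i-1}),\e{i}\rangle = \tfrac12\|\e{i}\|_{\LMatrix}^2 - \tfrac12\|\e{i-1}\|_{\LMatrix}^2 + \tfrac12\|\e{i}-\e{i-1}\|_{\LMatrix}^2$; the last, positive half-term is what will later absorb the Lipschitz perturbation from $\KM$. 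The term $\tau\langle \DM\KM(\pV{i-1})\DM^\top \e{i}, \e{i}\rangle = \tau\langle \KM(\pV{i-1})\DM^\top\e{i}, \DM^\top\e{i}\rangle$ is bounded below using the coercivity in (L2), $\geq \tau \km \|\DM^\top\e{i}\|_{\MassMatrixQ{-1}}^2 \geq \tau\km C_\Omega^2 \|\e{i}\|_{\MassMatrixP{}}^2$, where $C_\Omega$ is a discrete Poincaré-type constant relating $\|\DM^\top\cdot\|_{\MassMatrixQ{-1}}$ to $\|\cdot\|_{\MassMatrixP{}}$ (this is presumably where the unnamed $C_\Omega$ enters; I would state its existence as following from the injectivity of $\DM^\top$ together with the well-posedness hypothesis).

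The remaining two terms are the ones that must be controlled carefully. For the $\bV$-term, assumption (L1) gives $\langle \bV(\pV{i-1})-\bV(\pV{}), \e{i-1}\rangle \geq \tfrac1{\LipschitzB}\|\bV(\pV{i-1})-\bV(\pV{})\|_{\MassMatrixP{-1}}^2$; but I have it tested against $\e{i}$, not $\e{i-1}$, so I would write $\e{i} = \e{i-1} + (\e{i}-\e{i-1})$ and split accordingly, using Cauchy–Schwarz plus Young on the cross term $\langle \bV(\pV{i-1})-\bV(\pV{}), \e{i}-\e{i-1}\rangle$ against the $\tfrac12\|\e{i}-\e{i-1}\|_{\LMatrix}^2$ reserve, which contributes a $-\tfrac1{2L}$ to the coefficient of $\|\bV(\pV{i-1})-\bV(\pV{})\|_{\MassMatrixP{-1}}^2$. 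For the $\KM$-term, I estimate $|\tau\langle\DM(\KM(\pV{i-1})-\KM(\pV{}))(\rhsVFlux+\DM^\top\pV{}),\e{i}\rangle|$; here the trick is to keep $\e{i}$ with $\DM$, i.e. rewrite as $\tau\langle (\KM(\pV{i-1})-\KM(\pV{}))(\rhsVFlux+\DM^\top\pV{}), \DM^\top\e{i}\rangle$, then bound the first factor in $\MassMatrixQ{-1}$-norm using the subordinate-norm Lipschitz bound in (L2) together with the boundedness $q_\infty$ from (L3), giving $\leq \LipschitzK q_\infty \|\bV(\pV{i-1})-\bV(\pV{})\|_{\MassMatrixP{-1}}$; the remaining $\tau\|\cdot\|\cdot\|\DM^\top\e{i}\|_{\MassMatrixQ{-1}}$ is split by Young's inequality, sending a multiple of $\|\DM^\top\e{i}\|_{\MassMatrixQ{-1}}^2$ against the $\tau\km$ coercivity reserve (keeping, say, half of it) and the rest producing a $\tau\tfrac{q_\infty^2\LipschitzK^2}{2\km}$ contribution to the coefficient of $\|\bV(\pV{i-1})-\bV(\pV{})\|_{\MassMatrixP{-1}}^2$.

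Collecting everything, the coefficient multiplying $\|\bV(\pV{i-1})-\bV(\pV{})\|_{\MassMatrixP{-1}}^2$ comes out to $\tfrac1{\LipschitzB} - \tfrac1{2L} - \tau\tfrac{q_\infty^2\LipschitzK^2}{4\km}$ (up to the exact bookkeeping of how the $\tfrac12\|\e{i}-\e{i-1}\|_{\LMatrix}^2$ and the $\tau\km$ reserves are partitioned — the stated condition $\tfrac2{\LipschitzB} - \tfrac1L - \tau\tfrac{q_\infty^2\LipschitzK^2}{2\km}\geq 0$ is exactly twice this being nonnegative), so that term can be dropped. What survives is
\begin{align*}
 \tfrac12\|\e{i}\|_{\LMatrix}^2 + \tau\km C_\Omega^2\|\e{i}\|_{\MassMatrixP{}}^2 \LEQ \tfrac12\|\e{i-1}\|_{\LMatrix}^2,
\end{align*}
and since $\|\cdot\|_{\LMatrix}^2 = L\|\cdot\|_{\MassMatrixP{}}^2$, multiplying by $2/L$ rearranges to $(1 + \tfrac{2\tau\km C_\Omega^2}{L})\|\e{i}\|_{\MassMatrixP{}}^2 \leq \|\e{i-1}\|_{\MassMatrixP{}}^2$, i.e. $\|\e{i}\|_{\MassMatrixP{}}^2 \leq \tfrac{L}{L+2\tau\km C_\Omega^2}\|\e{i-1}\|_{\MassMatrixP{}}^2$, which matches the claim after relabeling $C_\Omega$ (a factor of $\sqrt 2$ can be absorbed into the constant). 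The main obstacle I anticipate is not any single estimate but the careful allocation of the two "reserve" positive terms — $\tfrac12\|\e{i}-\e{i-1}\|_{\LMatrix}^2$ and part of $\tau\km\|\DM^\top\e{i}\|_{\MassMatrixQ{-1}}^2$ — across the two perturbation terms so that both the $\|\bV(\cdot)-\bV(\cdot)\|$ coefficient and the residual $\|\DM^\top\e{i}\|$ coefficient end up nonnegative precisely under the stated smallness condition on $L$ and $\tau$; getting the constants to line up with the factor in the statement is the delicate part.
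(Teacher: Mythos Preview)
Your proposal is correct and follows essentially the same approach as the paper: subtract the two equations, test with $\e{i}$, use the polarization identity on the $\LMatrix$-term, split $\e{i}=\e{i-1}+(\e{i}-\e{i-1})$ for the $\bV$-contribution and apply (L1) plus Cauchy--Schwarz/Young, bound the $\KM$-perturbation via (L2)--(L3) plus Young against the coercivity reserve, and finish with a discrete Poincar\'e inequality. The only discrepancy is bookkeeping: the paper's proof uses exactly half of the $\tau\km\|\DM^\top\e{i}\|_{\MassMatrixQ{-1}}^2$ reserve in the Young step (yielding penalty coefficient $\tfrac{1}{2\km}$ rather than your $\tfrac{1}{4\km}$), which gives the condition $\tfrac{1}{\LipschitzB}-\tfrac{1}{2L}-\tau\tfrac{q_\infty^2\LipschitzK^2}{2\km}\geq 0$ and the rate $\tfrac{L}{L+\tau\km C_\Omega^2}$ directly---your observation that the constants can be absorbed into $C_\Omega$ is correct, so this is immaterial.
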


\noindent
The proof of Lemma~\ref{lemma:abstract:l-scheme:convergence} is given in~\ref{section:appendix}. The proof
is essentially the same as for the Richards equation by~\cite{List2016} but formulated in a slightly more general framework.
Assumption~(L1)--(L2) are generalized versions of assumptions made in~\cite{List2016} due to the possible 
global dependence of each component of $\bV=\bV(\pV{})$ on $\pV{}$.

\paragraph{Consequence for the Fixed-Stress-L-scheme}
By Remark~\ref{remark:algebraic:l-scheme:fsl-scheme}, the Fixed-Stress-L-scheme~\eqref{fsl:p}--\eqref{fsl:u}
is equivalent with the L-scheme~\eqref{algebraic:l-scheme:abstract}. Therefore, we check 
Assumption~(L1)--(L3) of Lemma~\ref{lemma:abstract:l-scheme:convergence} particularly for 
Eq.~\eqref{algebraic:nonlinear:one-field:p} in order to analyze the Fixed-Stress-L-scheme. We make 
the following physical assumptions:

\begin{itemize}
 \item[(F1)] With the varying porosity $\phipV=\phipV(\pV{})$ as defined in Eq.~\eqref{algebraic:nonlinear:phipV:definition},
 let $\pVSpace = \left\{ \pV{}\in\mathbb{R}^{\NP} \left| \phipV(\pV{}) \in[0,1] \text{ component-wise} \right. \right\}$
 denote the space of all pressures leading to physical deformations.
 
 \item[(F2)] Let the saturation model $\s:\mathbb{R}\rightarrow[0,1]$ have a bounded derivative and assume a non-vanishing residual
 saturation $0<\smin=\underset{\pV{}\in\pVSpace,\ i\in\{1,...,\NP\}}{\text{inf}} s(\pV{}_i)$. 
 
 \item[(F3)] Let the material law $\permeability=\permeability(\s):[0,1]\rightarrow\mathbb{R}$
 be Lipschitz continuous and assume there exist constants $k_\mathrm{w,m},k_\mathrm{w,M}\in\mathbb{R}_+$ satisfying
 $k_\mathrm{w,m} \leq \permeability (\s(\pV{}_i)) \leq k_\mathrm{w,M}$ for all $\pV{}\in\pVSpace,\ i\in\{1,...,\NP\}$.
 
 \item[(F4)] There exists a constant $q_\infty\in\mathbb{R}_+$ satisfying $\| {\MassMatrixQ{-1}} \rhsVFlux + \DM^\top \pV{} \|_\infty \leq q_\infty$
 for the solution of problem~\eqref{algebraic:nonlinear:abstract}, i.e., essentially fluxes are bounded.
\end{itemize}

\noindent
 Assumption~(F2)--(F4) are standard assumptions generally accepted for the analysis of the Richards equation.
 In particular, if the assumptions are not satisfied, the Richards equation as model for flow
 in partially saturated porous media has to be questioned.
In order to show Assumption~(L1)--(L2), we first show that $\bV$ is bi-Lipschitz.

\begin{lemma}\label{lemma:algebraic:b:lipschitz}
 Let Assumption~(F1)--(F2) be satisfied. Then for $\bV$ as defined in Eq.~\eqref{algebraic:nonlinear:bV:definition}, 
 there exist mesh-independent constants $\lipschitzB,\LipschitzB\in\mathbb{R}_+$ satisfying for all $\pV{},\tilde{\pV{}}\in\pVSpace$
 \begin{align*}
  \lipschitzB \| \pV{} - \tilde{\pV{}} \|_{\MassMatrixP{}}^2 \leq \langle \bV(\pV{})  - \bV(\tilde{\pV{}}), \pV{} - \tilde{\pV{}} \rangle \leq \LipschitzB \| \pV{} - \tilde{\pV{}} \|_{\MassMatrixP{}}^2.
 \end{align*}
\end{lemma}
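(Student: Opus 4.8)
The plan is to establish the two-sided bound by working component-wise on the diagonal structure of $\bV(\pV{}) = \MassSaturation(\pV{})\phipV(\pV{})$ and then assembling via the mass matrix $\MassMatrixP{}$. First I would unpack the definition of $\phipV$ from Eq.~\eqref{algebraic:nonlinear:phipV:definition}: its $\pV{}$-dependence enters only through the equivalent pore pressure term $\alpha^2 \DivPU \StiffnessMechanics^{-1}\DivPU^\top \peV{}(\pV{}) + \tfrac1N \MassMatrixP{}\peV(\pV{})$. Using the identity $d\ppore = \s(p)\,dp$ noted after Eq.~\eqref{model:pore-pressure:definition}, the map $p\mapsto \ppore(p)$ is monotone increasing with derivative $\s(p)\in[\smin,1]$, hence bi-Lipschitz on each component. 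Combined with the positive semidefiniteness of $\alpha^2\DivPU\StiffnessMechanics^{-1}\DivPU^\top$ and $\tfrac1N\MassMatrixP{}$ (here $\StiffnessMechanics$ is SPD for the linear elasticity problem with the assumed boundary conditions), this gives that $\pV{}\mapsto\phipV(\pV{})$ is a monotone Lipschitz perturbation of a constant vector; the relevant bounds depend only on $\smin$, $\alpha$, $N$, and spectral data of $\StiffnessMechanics$ and $\MassMatrixP{}$, which are mesh-independent once one normalizes by $\MassMatrixP{}$.

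Next I would handle the outer factor $\MassSaturation(\pV{})$. The key algebraic observation is that the product $\s(p)\,\phi(p)$ — more precisely the scalar function $g(p) := \s(p)\big(c_0 + c_1\ppore(p)\big)$ appearing in each component, with $c_1\geq0$ — has derivative $g'(p) = \s'(p)\big(c_0 + c_1\ppore(p)\big) + c_1 \s(p)^2$. On $\pVSpace$ the bracket $c_0 + c_1\ppore(p)$ is (a scaled version of) the physical porosity $\phipV(\pV{})\in[0,1]$, so it is nonnegative and bounded; together with $\s'(p)\geq 0$ (saturation is non-decreasing, Assumption~(F2)) this makes $g$ monotone increasing, and the term $c_1\s(p)^2\geq c_1\smin^2$ plus control of $\s'$ from (F2) gives both a positive lower bound and a finite upper bound on the relevant difference quotients. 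In vector form this reads $\langle \bV(\pV{})-\bV(\tilde\pV{}),\pV{}-\tilde\pV{}\rangle \geq \lipschitzB\|\pV{}-\tilde\pV{}\|^2_{\MassMatrixP{}}$ and $\leq \LipschitzB\|\pV{}-\tilde\pV{}\|^2_{\MassMatrixP{}}$ after observing that $\bV$ is diagonal relative to the nodal basis up to the fixed coupling operators, so the scalar estimates transfer to the $\MassMatrixP{}$-weighted inner product with mesh-independent constants.

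The main obstacle will be the cross-term that arises because $\phipV$ depends on $\pV{}$ globally (through $\DivPU\StiffnessMechanics^{-1}\DivPU^\top$) rather than locally: $\bV(\pV{}) - \bV(\tilde\pV{})$ does not factor cleanly into a diagonal-times-difference form. I would control this by writing $\bV(\pV{})-\bV(\tilde\pV{}) = \MassSaturation(\pV{})\big(\phipV(\pV{})-\phipV(\tilde\pV{})\big) + \big(\MassSaturation(\pV{})-\MassSaturation(\tilde\pV{})\big)\phipV(\tilde\pV{})$, estimating the first piece by positive semidefiniteness of the coupling operator together with $\MassSaturation\geq\smin I$, and the second piece by the Lipschitz bound on $\s$ and the uniform bound $\phipV\in[0,1]$ on $\pVSpace$. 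The lower bound then requires showing the "good" diagonal part $c_1\smin^2$ dominates — this is where the strict positivity of $\smin$ (non-vanishing residual saturation) and of $1/N$ is essential. Finally I would remark that all constants are expressed through $\smin$, $\LipschitzS$, $\alpha$, $N$, and the coercivity constant of $\StiffnessMechanics$, none of which degenerates under mesh refinement, establishing mesh-independence.
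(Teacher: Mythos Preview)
Your approach differs from the paper's and contains a genuine gap in the lower bound. You pin the strict positivity on the term $c_1\smin^2$ coming from $\tfrac{1}{N}\MassMatrixP{}$, stating that ``the strict positivity of $\smin$ \ldots\ and of $1/N$ is essential.'' But nothing in Assumptions~(F1)--(F2) forces $N<\infty$; the paper in fact runs all numerical tests with $N=\infty$, so $1/N=0$ and your claimed lower bound collapses to zero. Likewise, you only invoke \emph{positive semidefiniteness} of the coupling operator $\alpha^2\DivPU\StiffnessMechanics^{-1}\DivPU^\top$ for the first piece of your splitting, which cannot contribute a strictly positive lower bound either.

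The paper proceeds differently: it uses $\bV\in C^1$ to reduce the two-sided Rayleigh-quotient estimate to the spectrum of the Jacobian $\DbV(\pV{})$, which it computes explicitly (via $d\ppore=\s\,dp$) as
\[
\DbV(\pV{})=\mathrm{diag}\bigl(s'(\pV{}_k)\phipV_k(\pV{})\bigr)+\alpha^2\MassSaturation\DivPU\StiffnessMechanics^{-1}\DivPU^\top\MassSaturation^\top+\tfrac{1}{N}\MassSaturation\MassMatrixP{}\MassSaturation^\top,
\]
symmetric with nonnegative eigenvalues. The decisive input---which you do not use---is that $\DivPU\StiffnessMechanics^{-1}\DivPU^\top$ is \emph{norm-equivalent} to $\MassMatrixP{}$ with mesh-independent constants (the paper cites~\cite{Adler2017}). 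This spectral equivalence, not mere semidefiniteness, is what makes the second summand uniformly positive definite after scaling by $\MassMatrixP{-1/2}$ on both sides, and it simultaneously delivers mesh-independence of both $\lipschitzB$ and $\LipschitzB$. Your product-rule splitting could in principle be repaired by invoking the same norm-equivalence result, but the cross-term you identify then still needs a careful symmetrization argument; the Jacobian route avoids this because the symmetry of $\DbV$ falls out directly.
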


\begin{proof}
 As $\bV\in C^1(\mathbb{R}^{\NP};\mathbb{R}^{\NP})$, with Jacobian $\DbV(\pV{})\in\mathbb{R}^{\NP\times \NP}$, $\pV{}\in\mathbb{R}^{\NP}$,
 and $\MassMatrixP{}$ is a diagonal matrix, it holds
 \begin{align}\label{proof:algebraic:b:lipschitz:1}
  \underset{\substack{ \pV{},\tilde{\pV{}}\in\pVSpace \\ \pV{}\neq\tilde{\pV{}}}}{\text{sup}}
  \frac{\langle \bV(\pV{})  - \bV(\tilde{\pV{}}), \pV{} - \tilde{\pV{}} \rangle }{\| \pV{} - \tilde{\pV{}} \|_{\MassMatrixP{}}^2} 
  = 
  \underset{\substack{ \pV{}\in\pVSpace, \hV\in\mathbb{R}^{\NP}\setminus\{\nullV\} \\ \pV{}+\hV\in\pVSpace}}{\text{sup}}
  \frac{\langle \DbV(\pV{}) \hV , \hV \rangle }{\| \hV \|_{\MassMatrixP{}}^2}
  =
  \underset{\substack{ \pV{}\in\pVSpace, \hV\in\mathbb{R}^{\NP}\setminus\{\nullV\} \\ \pV{}+\hV\in\pVSpace}}{\text{sup}}
  \frac{\langle \MassMatrixP{-1/2}\DbV(\pV{})\MassMatrixP{-1/2} \hV , \hV \rangle }{\| \hV \|^2}.
 \end{align}
 Employing the properties of $\bV$, and making use of the specific choice of the equivalent pore pressure~\eqref{model:pore-pressure:definition},
 the Jacobian of $\bV$ is given by
 \begin{align}\label{proof:algebraic:b:lipschitz:2}
  \DbV(\pV{}) = \begin{bmatrix}
                 s'(\pV{}_1) \phipV_1(\pV{}) &  & \\
                 & \ddots & \\
                 & & s'(\pV{}_{\NP}) \phipV_{\NP}(\pV{})
                \end{bmatrix}
                +
                \alpha^2\MassSaturation(\pV{}) \DivPU \StiffnessMechanics^{-1} \DivPU^\top \MassSaturation(\pV{})^\top
                +
                \frac{1}{N} \MassSaturation(\pV{})\MassMatrixP{}\MassSaturation(\pV{})^\top.
 \end{align}
 Hence, $\DbV(\pV{})=\DbV(\pV{})^\top$ for all $\pV{}\in\pVSpace$ with eigenvalues greater or equal than zero. 
 Consequently, the largest value for the Rayleigh quotient~\eqref{proof:algebraic:b:lipschitz:1} is given by 
 the largest eigenvalue of $\MassMatrixP{-1/2}\DbV(\pV{})\MassMatrixP{-1/2}$ maximized over $\pV{}\in\pVSpace$.
 
 The porosity vector $\phiV$ is essentially scaled by $\MassMatrixP{}$.
 Additionally, as shown by~\cite{Adler2017}, $\DivPU \StiffnessMechanics^{-1} \DivPU^\top$ is norm equivalent with $\MassMatrixP{}$. 
 Hence, also $\DbV(\pV{})$ is norm equivalent with the standard mass matrix with mesh-independent 
 bounds. Together with employing the assumptions, we see there exists a largest eigenvalue 
 $L_b\in\mathbb{R}_+$ of $\MassMatrixP{-1/2}\DbV\MassMatrixP{-1/2}$ independent of the mesh.
 Analogously, it holds
 \begin{align*}
  \underset{\substack{ \pV{},\tilde{\pV{}}\in\pVSpace \\ \pV{}\neq\tilde{\pV{}}}}{\text{inf}}
  \frac{\langle \bV(\pV{})  - \bV(\tilde{\pV{}}), \pV{} - \tilde{\pV{}} \rangle }{\| \pV{} - \tilde{\pV{}} \|_{\MassMatrixP{}}^2}
  = 
  \underset{\substack{ \pV{}\in\pVSpace, \hV\in\mathbb{R}^{\NP}\setminus\{\nullV\} \\ \pV{}+\hV\in\pVSpace}}{\text{inf}}
  \frac{\langle \MassMatrixP{-1/2}\DbV(\pV{})\MassMatrixP{-1/2} \hV , \hV \rangle }{\| \hV \|^2}
 \end{align*}
 with the value given by the smallest eigenvalue $l_b$ of $\MassMatrixP{-1/2}\DbV(\pV{})\MassMatrixP{-1/2}$ minimized 
 over $\pV{}\in\pVSpace$. From above discussion it follows that $l_b\in\mathbb{R}_+$ is mesh-independent.
 All in all, the proposed thesis follows.
\end{proof}

\begin{remark}[Parabolic character of the nonlinear Biot equations]\label{remark:model:properties}
The Richards equation itself is a degenerate elliptic-parabolic equation due to possible development 
of fully saturated regions. However, from Eq.~\eqref{proof:algebraic:b:lipschitz:2} it follows, that
this type of degeneracy is not adopted by the nonlinear Biot equations~\eqref{model:p}--\eqref{model:u}.
Independent of the mesh size, the derivative of the fluid volume $\phi\s$ with respect to fluid pressure 
is not vanishing, as long as the fluid saturation is not vanishing. This observation is consistent with 
considerations by~\cite{Showalter2001} on the classical, linear Biot equations.
We note for weak coupling of mechanics and flow equations, numerically the parabolic character might
be effectively lost, making the original two-way coupled problem essentially equivalent to the Richards equation,
one-way coupled with the linear elasticity equations.
\end{remark}

\begin{corollary}
 Let Assumption~(F1)--(F2) be satisfied. Then $\bV$ is invertible on $\pVSpace$.
\end{corollary}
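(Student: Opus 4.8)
The plan is to obtain invertibility as an immediate consequence of the strong-monotonicity (lower) bound in Lemma~\ref{lemma:algebraic:b:lipschitz}. Here ``$\bV$ is invertible on $\pVSpace$'' is to be read as: the restriction $\bV|_{\pVSpace}$ is injective, hence a bijection onto its range $\bV(\pVSpace)\subset\mathbb{R}^{\NP}$, so that the inverse $\bV^{-1}:\bV(\pVSpace)\to\pVSpace$ is well defined; surjectivity onto all of $\mathbb{R}^{\NP}$ is neither expected nor needed, as $\pVSpace$ is in general a proper (closed) subset of $\mathbb{R}^{\NP}$.

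First I would invoke Lemma~\ref{lemma:algebraic:b:lipschitz}: under Assumption~(F1)--(F2) there is a mesh-independent $\lipschitzB\in\mathbb{R}_+$ with $\langle\bV(\pV{})-\bV(\tilde{\pV{}}),\pV{}-\tilde{\pV{}}\rangle\geq\lipschitzB\,\|\pV{}-\tilde{\pV{}}\|_{\MassMatrixP{}}^2$ for all $\pV{},\tilde{\pV{}}\in\pVSpace$. Since $\MassMatrixP{}$ is symmetric positive definite, the right-hand side is strictly positive whenever $\pV{}\neq\tilde{\pV{}}$; thus $\bV(\pV{})-\bV(\tilde{\pV{}})$ cannot vanish (its pairing with $\pV{}-\tilde{\pV{}}$ would otherwise be zero), which is precisely injectivity of $\bV$ on $\pVSpace$. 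As a quantitative byproduct I would also record that $\bV^{-1}$ is Lipschitz: combining the same lower bound with the Cauchy--Schwarz estimate $\langle\bV(\pV{})-\bV(\tilde{\pV{}}),\pV{}-\tilde{\pV{}}\rangle\leq\|\bV(\pV{})-\bV(\tilde{\pV{}})\|_{\MassMatrixP{-1}}\,\|\pV{}-\tilde{\pV{}}\|_{\MassMatrixP{}}$ gives $\|\pV{}-\tilde{\pV{}}\|_{\MassMatrixP{}}\leq\lipschitzB^{-1}\|\bV(\pV{})-\bV(\tilde{\pV{}})\|_{\MassMatrixP{-1}}$, i.e.\ $\bV^{-1}$ is Lipschitz on $\bV(\pVSpace)$ with constant $\lipschitzB^{-1}$.

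There is no genuine obstacle: the whole content is that the estimate of Lemma~\ref{lemma:algebraic:b:lipschitz} already encodes injectivity, and the only point requiring a word of care is the interpretation of ``invertible'' (injectivity onto the range rather than surjectivity onto $\mathbb{R}^{\NP}$). For robustness I would add a one-line alternative that avoids the $\pVSpace$-uniform bound altogether: since $\bV\in C^1(\mathbb{R}^{\NP};\mathbb{R}^{\NP})$ and, by the Jacobian formula~\eqref{proof:algebraic:b:lipschitz:2} together with $\s(\cdot)>0$, the symmetric Jacobian $\DbV(\pV{})$ is positive definite at every $\pV{}\in\mathbb{R}^{\NP}$, the fundamental theorem of calculus applied to $t\mapsto\langle\bV(\tilde{\pV{}}+t(\pV{}-\tilde{\pV{}})),\pV{}-\tilde{\pV{}}\rangle$ shows $\langle\bV(\pV{})-\bV(\tilde{\pV{}}),\pV{}-\tilde{\pV{}}\rangle>0$ for $\pV{}\neq\tilde{\pV{}}$, so $\bV$ is in fact strictly monotone and injective on all of $\mathbb{R}^{\NP}$, and a fortiori on $\pVSpace$.
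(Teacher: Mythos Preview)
Your proposal is correct and matches the paper's intent: the corollary is stated there without proof, as an immediate consequence of the bi-Lipschitz estimate in Lemma~\ref{lemma:algebraic:b:lipschitz}, and your argument via the strong-monotonicity lower bound is exactly the natural one-line justification. Your careful reading of ``invertible on $\pVSpace$'' as injectivity (hence bijectivity onto $\bV(\pVSpace)$) is appropriate and is precisely how the inverse $\bV^{-1}$ is subsequently used in the proof of the next corollary.
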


\begin{corollary}
 Let Assumption~(F1)--(F2) be satisfied. Then Assumption~(L1) is satisfied, in the sense, 
 for all $\pV{},\tilde{\pV{}}\in\pVSpace$, it holds
 \begin{align*}
  \| \bV(\pV{}) - \bV(\tilde{\pV{}}) \|_{\MassMatrixP{-1}}^2 \leq \LipschitzB \langle \bV(\pV{}) - \bV(\tilde{\pV{}}), \pV{} - \tilde{\pV{}} \rangle.
 \end{align*}
\end{corollary}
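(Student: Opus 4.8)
The plan is to derive (L1) from the precise structure of the Jacobian $\DbV$ obtained in the proof of Lemma~\ref{lemma:algebraic:b:lipschitz}, combined with a mean-value representation of the increment. Since $\bV\in C^1(\mathbb{R}^{\NP};\mathbb{R}^{\NP})$, for $\pV{},\tilde{\pV{}}\in\pVSpace$ I would write $\bV(\pV{})-\bV(\tilde{\pV{}})=\mathbf{G}\,(\pV{}-\tilde{\pV{}})$ with $\mathbf{G}:=\int_0^1 \DbV\bigl(\tilde{\pV{}}+t(\pV{}-\tilde{\pV{}})\bigr)\,dt$. By Eq.~\eqref{proof:algebraic:b:lipschitz:2} every matrix $\DbV(\cdot)$ along this segment is symmetric, and by the argument in Lemma~\ref{lemma:algebraic:b:lipschitz} the rescaled matrix $\MassMatrixP{-1/2}\DbV(\cdot)\MassMatrixP{-1/2}$ has all eigenvalues in $[\lipschitzB,\LipschitzB]$; hence so does $\tilde{\mathbf{G}}:=\MassMatrixP{-1/2}\mathbf{G}\MassMatrixP{-1/2}$, being an average of such matrices, and $\tilde{\mathbf{G}}$ is symmetric positive definite with $\nullV\preceq \tilde{\mathbf{G}}\preceq \LipschitzB\,\Id$ (with $\preceq$ the Loewner order). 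As in the statement of (L1), this step only uses the values of $\DbV$ on $\pVSpace$, consistently with Lemma~\ref{lemma:algebraic:b:lipschitz}.

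Next I would introduce the weighted difference $\w:=\MassMatrixP{1/2}(\pV{}-\tilde{\pV{}})$ and compute, using $\mathbf{G}=\MassMatrixP{1/2}\tilde{\mathbf{G}}\MassMatrixP{1/2}$,
\[
 \| \bV(\pV{}) - \bV(\tilde{\pV{}}) \|_{\MassMatrixP{-1}}^2 = \| \tilde{\mathbf{G}}\,\w \|^2 = \langle \tilde{\mathbf{G}}^2 \w, \w\rangle, \qquad \langle \bV(\pV{}) - \bV(\tilde{\pV{}}), \pV{}-\tilde{\pV{}}\rangle = \langle \tilde{\mathbf{G}}\,\w, \w\rangle .
\]
Diagonalizing $\tilde{\mathbf{G}}$, every eigenvalue $\mu\in[\lipschitzB,\LipschitzB]$ satisfies $\mu^2\leq \LipschitzB\,\mu$, so $\tilde{\mathbf{G}}^2\preceq \LipschitzB\,\tilde{\mathbf{G}}$ and therefore $\langle\tilde{\mathbf{G}}^2\w,\w\rangle\leq\LipschitzB\langle\tilde{\mathbf{G}}\w,\w\rangle$. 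Inserting this into the two displayed identities yields exactly the claimed inequality, with the very same constant $\LipschitzB$ as in Lemma~\ref{lemma:algebraic:b:lipschitz}.

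Two remarks on why the argument is set up this way. First, the symmetry of $\DbV$ (hence of $\tilde{\mathbf{G}}$) is essential: the estimate is precisely the finite-dimensional Baillon--Haddad co-coercivity inequality for the convex potential whose gradient is $\bV$, written in the $\MassMatrixP{}$-weighted inner product, and it yields the sharp constant $\LipschitzB$ rather than the weaker $\LipschitzB^2/\lipschitzB$ one would get by merely combining the two bounds of Lemma~\ref{lemma:algebraic:b:lipschitz}. Second, the only point requiring care is the implicit convexity of the physical state set $\pVSpace$, so that the line segment in the mean-value representation stays inside $\pVSpace$ where the eigenvalue bounds hold; this is already tacitly used in Lemma~\ref{lemma:algebraic:b:lipschitz}, and beyond it the argument is a routine spectral estimate, so I do not expect a genuine obstacle.
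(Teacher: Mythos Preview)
Your argument is correct and takes a genuinely different route from the paper. The paper invokes the preceding corollary that $\bV$ is invertible on $\pVSpace$, performs the change of variables $\pV{}\mapsto\bV^{-1}(\pV{})$, and uses the Inverse Function Theorem together with symmetry of $\DbV$ to rewrite the supremum of $\|\bV(\pV{})-\bV(\tilde{\pV{}})\|_{\MassMatrixP{-1}}^2/\langle\bV(\pV{})-\bV(\tilde{\pV{}}),\pV{}-\tilde{\pV{}}\rangle$ as exactly the supremum already bounded by $\LipschitzB$ in Lemma~\ref{lemma:algebraic:b:lipschitz}. Your proof instead bypasses invertibility entirely: the averaged Jacobian $\tilde{\mathbf{G}}$ is symmetric with spectrum in $[0,\LipschitzB]$, hence $\tilde{\mathbf{G}}^2\preceq\LipschitzB\,\tilde{\mathbf{G}}$, which is precisely the Baillon--Haddad co-coercivity estimate in the $\MassMatrixP{}$-weighted inner product. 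This is more self-contained (it does not rely on the invertibility corollary or the Inverse Function Theorem) and makes transparent why the \emph{same} constant $\LipschitzB$ appears, rather than the cruder $\LipschitzB^2/\lipschitzB$. The paper's approach, on the other hand, has the virtue of establishing the equality of the two suprema, which it then reuses symmetrically for Corollary~\ref{corollary:algebraic:b:auxiliary}. Your remark on the convexity of $\pVSpace$ is well placed: both arguments tacitly require the spectral bounds along the segment joining $\pV{}$ and $\tilde{\pV{}}$, and the paper's Lemma~\ref{lemma:algebraic:b:lipschitz} already relies on this implicitly.
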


\begin{proof}
As $\bV$ is invertible and $\DbV$ is symmetric, using the Inverse Function theorem, it holds
\begin{align*}
  \underset{\substack{ \pV{},\tilde{\pV{}}\in\pVSpace \\ \pV{}\neq\tilde{\pV{}}}}{\text{sup}}
  \frac{\| \bV(\pV{}) - \bV(\tilde{\pV{}}) \|_{\MassMatrixP{-1}}^2 }{\langle \bV(\pV{}) - \bV(\tilde{\pV{}}), \pV{} - \tilde{\pV{}} \rangle}  
  &=
  \underset{\substack{ \bV^{-1}(\pV{}),\bV^{-1}(\tilde{\pV{}})\in\pVSpace \\ \pV{}\neq\tilde{\pV{}}}}{\text{sup}}
  \frac{\| \pV{} - \tilde{\pV{}} \|_{\MassMatrixP{-1}}^2 }{\langle \bV^{-1}(\pV{}) - \bV^{-1}(\tilde{\pV{}}), \pV{} - \tilde{\pV{}} \rangle} \\
  &=
  \left[ \underset{\substack{ \bV^{-1}(\pV{}),\bV^{-1}(\tilde{\pV{}})\in\pVSpace \\ \pV{}\neq\tilde{\pV{}}}}{\text{inf}}
  \frac{\langle \bV^{-1}(\pV{}) - \bV^{-1}(\tilde{\pV{}}), \pV{} - \tilde{\pV{}} \rangle}{\| \pV{} - \tilde{\pV{}} \|_{\MassMatrixP{-1}}^2}\right]^{-1} \\
  &=
  \underset{\substack{ \pV{},\tilde{\pV{}}\in\pVSpace \\ \pV{}\neq\tilde{\pV{}}}}{\text{sup}}
  \frac{\langle \bV(\pV{})  - \bV(\tilde{\pV{}}), \pV{} - \tilde{\pV{}} \rangle }{\| \pV{} - \tilde{\pV{}} \|_{\MassMatrixP{}}^2}.
\end{align*}
The result follows from Lemma~\ref{lemma:algebraic:b:lipschitz}. 
\end{proof}

\noindent
Analogously, we obtain:
\begin{corollary}\label{corollary:algebraic:b:auxiliary}
 Let Assumption~(F1)--(F2) be satisfied. Then for all $\pV{},\tilde{\pV{}}\in\pVSpace$, it holds
 \begin{align*}
  \lipschitzB \langle \bV(\pV{}) - \bV(\tilde{\pV{}}), \pV{} - \tilde{\pV{}} \rangle \leq \| \bV(\pV{}) - \bV(\tilde{\pV{}}) \|_{\MassMatrixP{-1}}^2.
 \end{align*}
\end{corollary}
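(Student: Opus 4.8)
The plan is to reproduce the argument used for the preceding corollary, interchanging the roles of supremum and infimum throughout. Since $\bV$ is invertible on $\pVSpace$ (by the invertibility corollary following Lemma~\ref{lemma:algebraic:b:lipschitz}), and since Lemma~\ref{lemma:algebraic:b:lipschitz} in particular yields $\langle\bV(\pV{})-\bV(\tilde{\pV{}}),\pV{}-\tilde{\pV{}}\rangle \geq \lipschitzB\|\pV{}-\tilde{\pV{}}\|_{\MassMatrixP{}}^2 > 0$ for $\pV{}\neq\tilde{\pV{}}$, the asserted inequality is equivalent, the case $\pV{}=\tilde{\pV{}}$ being trivial, to
\[
 \inf_{\substack{\pV{},\tilde{\pV{}}\in\pVSpace\\\pV{}\neq\tilde{\pV{}}}}
 \frac{\|\bV(\pV{})-\bV(\tilde{\pV{}})\|_{\MassMatrixP{-1}}^2}{\langle\bV(\pV{})-\bV(\tilde{\pV{}}),\pV{}-\tilde{\pV{}}\rangle}
 \;\geq\; \lipschitzB .
\]

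First I would substitute $\pV{}\mapsto\bV^{-1}(\pV{})$ and $\tilde{\pV{}}\mapsto\bV^{-1}(\tilde{\pV{}})$, exactly as in the proof of the previous corollary, turning the left-hand side into $\inf \|\pV{}-\tilde{\pV{}}\|_{\MassMatrixP{-1}}^2\big/\langle\bV^{-1}(\pV{})-\bV^{-1}(\tilde{\pV{}}),\pV{}-\tilde{\pV{}}\rangle$, the infimum now running over all $\pV{},\tilde{\pV{}}$ with $\bV^{-1}(\pV{}),\bV^{-1}(\tilde{\pV{}})\in\pVSpace$. Using $\inf(1/x)=1/\sup(x)$, this is the reciprocal of a supremum of a secant quotient for $\bV^{-1}$ measured in the $\MassMatrixP{-1}$-inner product. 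To evaluate that supremum I would invoke the same linearization device as in \eqref{proof:algebraic:b:lipschitz:1}: by the Inverse Function theorem the Jacobian of $\bV^{-1}$ is the pointwise inverse of $\DbV$, hence symmetric, so the supremum equals the largest eigenvalue of $\MassMatrixP{1/2}(\DbV)^{-1}\MassMatrixP{1/2}=(\MassMatrixP{-1/2}\DbV\MassMatrixP{-1/2})^{-1}$ maximized over $\pVSpace$, i.e.\ the reciprocal of the smallest eigenvalue of $\MassMatrixP{-1/2}\DbV\MassMatrixP{-1/2}$ minimized over $\pVSpace$. Undoing the linearization once more identifies that quantity with $\inf_{\pV{},\tilde{\pV{}}\in\pVSpace}\langle\bV(\pV{})-\bV(\tilde{\pV{}}),\pV{}-\tilde{\pV{}}\rangle\big/\|\pV{}-\tilde{\pV{}}\|_{\MassMatrixP{}}^2$, which is $\geq\lipschitzB$ by Lemma~\ref{lemma:algebraic:b:lipschitz}; chaining the two reciprocals yields the claim. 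In short, the whole computation is the verbatim dual of the one already displayed, with $\LipschitzB$ replaced by $\lipschitzB$ and every $\sup$/$\inf$ swapped.

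The only step requiring genuine care is the passage between secant quotients and Jacobian eigenvalue ranges, which is also the one tacitly used in \eqref{proof:algebraic:b:lipschitz:1} and in the previous corollary: it relies on $\pVSpace$ containing the segment joining any two of its points, so that the mean-value identity $\bV(\pV{})-\bV(\tilde{\pV{}})=\big(\int_0^1\DbV(\tilde{\pV{}}+t(\pV{}-\tilde{\pV{}}))\,dt\big)(\pV{}-\tilde{\pV{}})$ applies with an averaged Jacobian that, being a convex combination of the symmetric matrices $\DbV$ over $\pVSpace$, inherits the eigenvalue bounds $[\lipschitzB,\LipschitzB]$ established in Lemma~\ref{lemma:algebraic:b:lipschitz}; the analogous statement is needed for $\bV^{-1}$ on the image $\bV(\pVSpace)$. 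Once these structural facts are granted, no further estimates are needed.
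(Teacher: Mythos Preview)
Your proposal is correct and follows exactly the approach the paper intends: the paper's own ``proof'' is the single word \emph{Analogously}, referring to the displayed chain of equalities in the preceding corollary, and you have faithfully written out that dual argument with every $\sup$/$\inf$ swapped and $\LipschitzB$ replaced by $\lipschitzB$. Your closing remark about the convexity of $\pVSpace$ needed to pass between secant quotients and Jacobian eigenvalue ranges is a valid technical caveat that the paper itself leaves implicit.
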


\begin{corollary}\label{corollary:algebraic:k:lipschitz}
 Let Assumption~(F1)--(F3) be satisfied. Then, Assumption~(L2) is satisfied, in the sense, 
 there exists a constant $\LipschitzK\in\mathbb{R}_+$, satisfying for all $\pV{},\tilde{\pV{}}\in\pVSpace$,
 \begin{align}\label{corollary:algebraic:k:lipschitz:1}
  \| (\KM(\pV{}) - \KM(\tilde{\pV{}}))\MassMatrixQ{}\|_{\MassMatrixQ{},\infty} \leq \LipschitzK \| \bV(\pV{}) - \bV(\tilde{\pV{}})\|_{\MassMatrixQ{-1}}. 
 \end{align}
 Furthermore, there exist constants $\km,\kM\in\mathbb{R}_+$, satisfying for all $\pV{}\in\mathbb{R}^{\NP}$, 
 $\qV{}\in\mathbb{R}^{\NQ}$
 \begin{align}\label{corollary:algebraic:k:lipschitz:2}
  \km \| \qV{} \|_{\MassMatrixQ{-1}}^2 \leq \langle \KM(\pV{}) \qV{}, \qV{} \rangle \leq \kM \| \qV{} \|_{\MassMatrixQ{-1}}^2.
 \end{align} 
 \end{corollary}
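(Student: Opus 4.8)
The plan is to exploit that the nonlinearity in $\KM=\MassPermeability$ enters only through the scalar, element-wise mobility. Since the pressure space consists of piecewise constants, for every $\qV{}\in\mathbb{R}^{\NQ}$ with associated lowest-order Raviart--Thomas function $\qh$ one has $\langle\MassPermeability(\pV{})^{-1}\qV{},\qV{}\rangle=\int_\Omega\permeability(\s(\ph))^{-1}|\qh|^2$ and $\langle\MassMatrixQ{}\qV{},\qV{}\rangle=\int_\Omega|\qh|^2$, where $\permeability(\s(\ph))$ takes the value $\permeability(\s(\pV{}_i))$ on the $i$-th element. I would then establish the two displayed claims separately: first the spectral equivalence \eqref{corollary:algebraic:k:lipschitz:2}, then the Lipschitz bound \eqref{corollary:algebraic:k:lipschitz:1}, where the norm on the right is read as $\|\cdot\|_{\MassMatrixP{-1}}$, consistent with Assumption~(L2).

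For \eqref{corollary:algebraic:k:lipschitz:2}, note that by (F3), together with continuity and strict positivity of $\permeability$ on the compact interval $[0,1]$, one has $0<k_\mathrm{w,m}\le\permeability(\s(\pV{}_i))\le k_\mathrm{w,M}$ for all $\pV{}\in\mathbb{R}^{\NP}$, hence $k_\mathrm{w,M}^{-1}\le\permeability(\s)^{-1}\le k_\mathrm{w,m}^{-1}$ pointwise. Inserting this into the identity above gives $k_\mathrm{w,M}^{-1}\langle\MassMatrixQ{}\qV{},\qV{}\rangle\le\langle\MassPermeability(\pV{})^{-1}\qV{},\qV{}\rangle\le k_\mathrm{w,m}^{-1}\langle\MassMatrixQ{}\qV{},\qV{}\rangle$; since $\MassPermeability(\pV{})^{-1}$ and $\MassMatrixQ{}$ are symmetric positive definite, inverting this matrix inequality reverses it and yields \eqref{corollary:algebraic:k:lipschitz:2} with $\km=k_\mathrm{w,m}$ and $\kM=k_\mathrm{w,M}$.

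For \eqref{corollary:algebraic:k:lipschitz:1} I would chain three estimates. First, the resolvent identity $\KM(\pV{})-\KM(\tilde{\pV{}})=\KM(\pV{})\bigl(\KM(\tilde{\pV{}})^{-1}-\KM(\pV{})^{-1}\bigr)\KM(\tilde{\pV{}})$, combined with the uniform bounds $\km\MassMatrixQ{-1}\preceq\KM(\cdot)\preceq\kM\MassMatrixQ{-1}$ just obtained, reduces the estimate to the mass-type matrix $\KM(\tilde{\pV{}})^{-1}-\KM(\pV{})^{-1}$, which is $\MassMatrixQ{}$ with its element contributions rescaled by $\delta_i:=\permeability(\s(\tilde{\pV{}}_i))^{-1}-\permeability(\s(\pV{}_i))^{-1}$; by the mesh-independent norm-equivalence of local and global finite-element mass matrices (the same ingredient used in the proof of Lemma~\ref{lemma:algebraic:b:lipschitz}, cf.\ \cite{Adler2017}), its $\|\cdot\|_{{\MassMatrixQ{}},\infty}$-norm is bounded by a constant times $\max_i|\delta_i|$. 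Second, since $\s$ has a bounded derivative (F2) with values in $[\smin,1]$ and $\permeability$ is Lipschitz on $[0,1]$ with $\permeability\ge k_\mathrm{w,m}>0$ (F3), the map $p\mapsto\permeability(\s(p))^{-1}$ is Lipschitz with a constant depending only on $\LipschitzS$, the Lipschitz constant of $\permeability$, and $k_\mathrm{w,m}$, so $\max_i|\delta_i|\le C\,\|\pV{}-\tilde{\pV{}}\|_\infty$. Third, by Lemma~\ref{lemma:algebraic:b:lipschitz} and Cauchy--Schwarz, $\lipschitzB\|\pV{}-\tilde{\pV{}}\|_{\MassMatrixP{}}^2\le\langle\bV(\pV{})-\bV(\tilde{\pV{}}),\pV{}-\tilde{\pV{}}\rangle\le\|\bV(\pV{})-\bV(\tilde{\pV{}})\|_{\MassMatrixP{-1}}\,\|\pV{}-\tilde{\pV{}}\|_{\MassMatrixP{}}$, hence $\|\pV{}-\tilde{\pV{}}\|_{\MassMatrixP{}}\le\lipschitzB^{-1}\|\bV(\pV{})-\bV(\tilde{\pV{}})\|_{\MassMatrixP{-1}}$, and $\|\cdot\|_\infty$ is controlled by $\|\cdot\|_{\MassMatrixP{}}$. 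Composing the three estimates gives \eqref{corollary:algebraic:k:lipschitz:1} with $\LipschitzK$ absorbing all constants.

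The routine parts are the scalar Lipschitz computation for $p\mapsto\permeability(\s(p))^{-1}$ and the symmetric-matrix manipulations. The main obstacle, where I would spend the most care, is the first step: propagating the mixed, subordinate norm $\|\cdot\|_{{\MassMatrixQ{}},\infty}$ through the resolvent identity and the products with $\MassMatrixQ{}$, and checking that the finite-element scaling constants linking $\KM(\tilde{\pV{}})^{-1}-\KM(\pV{})^{-1}$ to $\max_i|\delta_i|$ are genuinely mesh-independent. As in \cite{List2016}, I would also make explicit that converting $\|\pV{}-\tilde{\pV{}}\|_\infty$ into a mass-matrix norm in the third step may introduce a benign mesh-dependent factor into $\LipschitzK$, which is harmless for the convergence statement of Lemma~\ref{lemma:abstract:l-scheme:convergence}.
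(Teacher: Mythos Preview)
Your proposal is correct and follows the same overall strategy as the paper: chain Lipschitz estimates for the constitutive laws and then invoke the bi-Lipschitz property of $\bV$ from Lemma~\ref{lemma:algebraic:b:lipschitz} to pass from $\|\pV{}-\tilde{\pV{}}\|_{\MassMatrixP{}}$ to $\|\bV(\pV{})-\bV(\tilde{\pV{}})\|_{\MassMatrixP{-1}}$; the spectral equivalence~\eqref{corollary:algebraic:k:lipschitz:2} is handled identically via (F3) and inversion of the SPD bounds.

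The implementation differs in two minor respects. First, the paper bounds $\|(\KM(\pV{})-\KM(\tilde{\pV{}}))\MassMatrixQ{}\|_{\MassMatrixQ{},\infty}$ directly, going through the diagonal saturation matrix $\MassSaturation$ as an intermediate object and invoking a ``scaling argument'' without further detail; you instead use the resolvent identity to reduce to the difference of the weighted mass matrices $\KM(\cdot)^{-1}$, which is more explicit about where the element-wise mobility enters but requires the extra care you flag when pushing the mixed $\|\cdot\|_{\MassMatrixQ{},\infty}$-norm through the products with $\KM(\pV{})$ and $\KM(\tilde{\pV{}})\MassMatrixQ{}$. Second, for the final step the paper combines Lemma~\ref{lemma:algebraic:b:lipschitz} with Corollary~\ref{corollary:algebraic:b:auxiliary} to obtain $\LipschitzK=\LipschitzLambdaT\LipschitzS\lipschitzB^{-2}$, whereas your Cauchy--Schwarz argument gives the sharper factor $\lipschitzB^{-1}$. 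Both routes are valid; yours is slightly more transparent about the finite-element scaling and yields a marginally better constant.
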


\begin{proof}
 As the underlying $\permeability=\permeability(\s)$ is Lipschitz continuous, together with a scaling argument, 
 it follows, there exists a constant $\LipschitzLambdaT\in\mathbb{R}_+$ satisfying
 \begin{align*}
  \| (\KM(\pV{}) - \KM(\tilde{\pV{}})) \MassMatrixQ{}\|_{\MassMatrixQ{},\infty} \leq \LipschitzLambdaT \| \MassSaturation(\pV{}) - \MassSaturation(\pV{}) \|_{\MassMatrixP{},\infty}.
 \end{align*}
 Furthermore, as $\s=\s(p)$ is Lipschitz continuous, and $\MassSaturation$ is a diagonal matrix, 
 there exists a constant $\LipschitzS\in\mathbb{R}_+$ satisfying
 \begin{align*}
  \| \MassSaturation(\pV{}) - \MassSaturation(\pV{}) \|_{\MassMatrixP{},\infty} \leq \LipschitzS \| \pV{} - \tilde{\pV{}} \|_{\MassMatrixP{}}.
 \end{align*}
 All in all, with Lemma~\ref{lemma:algebraic:b:lipschitz} and Corollary~\ref{corollary:algebraic:b:auxiliary}, 
 Eq.~\eqref{corollary:algebraic:k:lipschitz:1} follows with $\LipschitzK = \LipschitzLambdaT \LipschitzS \lipschitzB^{-2}$.
 Eq.~\eqref{corollary:algebraic:k:lipschitz:2} follows directly from Assumption~(F3) together with a scaling argument.
\end{proof}

\noindent
All in all, under the assumptions of non-vanishing residual saturation, permeability, and porosity,
we obtain convergence for the L-scheme~\eqref{algebraic:l-scheme:abstract}, which translates directly
to the Fixed-Stress-L-scheme~\eqref{fsl:p}--\eqref{fsl:u}.

\begin{theorem}\label{theorem:convergence:fsl}
 Let Assumption~(F1)--(F4) be satisfied. 
 Let $\pV{}\in\mathbb{R}^{\NP}$ and $\pV{i}\in\mathbb{R}^{\NP}$ be the solutions of the nonlinear problem~\eqref{algebraic:nonlinear:one-field:p}
 and the L-scheme~\eqref{algebraic:l-scheme:abstract}, respectively. Assume they are unique.
 Let the initial guess $\pV{0}\in\mathbb{R}^{\NP}$ satisfy $\mathcal{B}_{\pV{}}(\| \pV{0} - \pV{}\|_{\MassMatrixP{}}) \subset \pVSpace$,
 where $\mathcal{B}_{\pV{}}(r)\subset\mathbb{R}^{\NP}$ denotes the sphere with center $\pV{}$ and radius $r>0$.
 Let $L$ and $\tau$ be chosen such that $\tfrac{1}{\LipschitzB} - \tfrac{1}{2L} - \tau \tfrac{q_\infty^2 \LipschitzK^2}{2\km}\geq0$.
 Then the L-scheme~\eqref{algebraic:l-scheme:abstract} converges linearly with mesh-independent convergence rate $\sqrt{\tfrac{L}{L + \tau \km C_\Omega^2}}$.
 Furthermore, by induction, each iterate is a physical solution $\{ \pV{i} \}_i \subset \pVSpace$.
\end{theorem}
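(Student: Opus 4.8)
The plan is to reduce Theorem~\ref{theorem:convergence:fsl} to the abstract Lemma~\ref{lemma:abstract:l-scheme:convergence} by verifying its hypotheses in the concrete setting of the reduced pressure problem~\eqref{algebraic:nonlinear:one-field:p} and its L-scheme linearization~\eqref{algebraic:l-scheme:abstract}. By Remark~\ref{remark:algebraic:l-scheme:fsl-scheme}, the L-scheme is algebraically equivalent with the Fixed-Stress-L-scheme, so convergence of the former immediately transfers. First I would restrict attention to the physical set $\pVSpace$ from (F1) and invoke the already-established corollaries: the bi-Lipschitz estimate for $\bV$ (Lemma~\ref{lemma:algebraic:b:lipschitz}) gives (L1) with the mesh-independent constant $\LipschitzB$, Corollary~\ref{corollary:algebraic:k:lipschitz} gives (L2) with mesh-independent $\km,\kM,\LipschitzK$, and (F4) is literally (L3) with the same $q_\infty$. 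Thus, once iterates remain inside $\pVSpace$, all abstract assumptions hold with mesh-independent constants, and the contraction estimate $\| \pV{i} - \pV{} \|_{\MassMatrixP{}}^2 \leq \tfrac{L}{L + \tau \km C_\Omega^2} \| \pV{i-1} - \pV{} \|_{\MassMatrixP{}}^2$ follows verbatim from Lemma~\ref{lemma:abstract:l-scheme:convergence}, provided the parameter condition $\tfrac{1}{\LipschitzB} - \tfrac{1}{2L} - \tau \tfrac{q_\infty^2 \LipschitzK^2}{2\km} \geq 0$ is met; note this is exactly the stated hypothesis (with a harmless rescaling of the abstract condition $\tfrac{2}{\LipschitzB} - \tfrac{1}{L} - \tau \tfrac{q_\infty^2 \LipschitzK^2}{2\km} \geq 0$ by a factor of two).

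The remaining point, which is the main obstacle, is the bootstrapping argument showing that every iterate stays physical, i.e.\ $\{\pV{i}\}_i \subset \pVSpace$, so that the abstract estimate may legitimately be applied at each step. Here I would argue by induction on $i$. The base case $\pV{0}\in\pVSpace$ holds since $\pV{0}\in\mathcal{B}_{\pV{}}(\|\pV{0}-\pV{}\|_{\MassMatrixP{}})\subset\pVSpace$ by assumption, together with $\pV{}\in\pVSpace$ (the exact solution is physical, which is implicit in the setup and follows since (F1)--(F4) are posed on $\pVSpace$ and $\bV$ is invertible there by the corollary). For the inductive step, assume $\pV{0},\dots,\pV{i-1}\in\pVSpace$; then all hypotheses of Lemma~\ref{lemma:abstract:l-scheme:convergence} are available for the step producing $\pV{i}$, so $\|\pV{i}-\pV{}\|_{\MassMatrixP{}} \leq \sqrt{\tfrac{L}{L+\tau\km C_\Omega^2}}\,\|\pV{i-1}-\pV{}\|_{\MassMatrixP{}} < \|\pV{i-1}-\pV{}\|_{\MassMatrixP{}} \leq \|\pV{0}-\pV{}\|_{\MassMatrixP{}}$, using that the rate is strictly less than one because $\tau\km C_\Omega^2>0$. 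Hence $\pV{i}\in\mathcal{B}_{\pV{}}(\|\pV{0}-\pV{}\|_{\MassMatrixP{}})\subset\pVSpace$, closing the induction.

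Combining the two parts yields the claim: all iterates are physical, and for each $i$ the mesh-independent linear contraction holds, whence $\|\pV{i}-\pV{}\|_{\MassMatrixP{}} \leq \big(\tfrac{L}{L+\tau\km C_\Omega^2}\big)^{i/2}\|\pV{0}-\pV{}\|_{\MassMatrixP{}} \to 0$ with mesh-independent rate $\sqrt{\tfrac{L}{L+\tau\km C_\Omega^2}}$, and finally the equivalence of Remark~\ref{remark:algebraic:l-scheme:fsl-scheme} transports this to the Fixed-Stress-L-scheme~\eqref{fsl:p}--\eqref{fsl:u}. I expect the only genuinely delicate point to be making precise that the exact solution $\pV{}$ lies in $\pVSpace$ and that the constant $C_\Omega$ (a Poincaré-type constant relating $\|\DM^\top\pV{}\|$ and $\|\pV{}\|_{\MassMatrixP{}}$, inherited from the abstract lemma) is itself mesh-independent — both follow from the discussion preceding the theorem, in particular the norm-equivalences used in the proof of Lemma~\ref{lemma:algebraic:b:lipschitz}, but they deserve an explicit sentence. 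The rest is a direct citation of the abstract lemma together with the induction above.
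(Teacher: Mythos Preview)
Your proposal is correct and follows exactly the paper's intended route: the theorem is presented as an immediate consequence of the abstract Lemma~\ref{lemma:abstract:l-scheme:convergence} once the corollaries have verified (L1)--(L3) from (F1)--(F4), and you have simply made explicit the induction on physicality that the paper only names in the statement. One small correction: the parameter conditions in the lemma and the theorem are not related by a rescaling --- rather the theorem's hypothesis $\tfrac{1}{\LipschitzB} - \tfrac{1}{2L} - \tau \tfrac{q_\infty^2 \LipschitzK^2}{2\km}\geq 0$ is genuinely stronger than the lemma's stated $\tfrac{2}{\LipschitzB} - \tfrac{1}{L} - \tau \tfrac{q_\infty^2 \LipschitzK^2}{2\km}\geq 0$ (and in fact matches the condition actually used in the appendix proof), so the implication you need holds for that reason.
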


\begin{remark}[Choice of $L$]\label{remark:choice-l}
 Including knowledge on the convergence for the fixed-stress splitting scheme, 
 Eq.~\eqref{proof:algebraic:b:lipschitz:2} justifies the choice of the tuning parameter for the 
 Fixed-Stress-L-scheme, cf.\ Section~\ref{section:fsl-scheme}. Assuming the worst case scenario,
 all quantities are globally maximized yielding an \textit{a priori choice}. This pessimistic choice slows 
 down potential convergence but increases robustness. From the proof of Theorem~\ref{theorem:convergence:fsl},
 it follows that local optimization would be sufficient, yielding an optimal but solution-dependent tuning parameter.
 In this spirit, Eq.~\eqref{proof:algebraic:b:lipschitz:2} also provides the basis for the modification of the tuning
 parameter used for both the Fixed-Stress-Modified-Picard method and the Fixed-Stress-Newton method,
 cf.\ Section~\ref{section:fs-mp-newton}. 
\end{remark}

\begin{remark}[Limitations of the Fixed-Stress-L-scheme]\label{remark:limits-l}
 Based on Theorem~\ref{theorem:convergence:fsl}, we expect the convergence of the Fixed-Stress-L-scheme~\eqref{fsl:p}--\eqref{fsl:u}
 to deteriorate for either too large time steps or too large Lipschitz constants for the constitutive laws $\s$ and $\permeability$. 
 This applies in particular if the constitutive laws are only H\"older continuous. Furthermore, given the parameter $L$ 
 is sufficiently large and the time step size sufficiently small, theoretical convergence of the Fixed-Stress-L-scheme is
 guaranteed. However, in practice, numerical round-off errors might lead to stagnation.
\end{remark}

\section{Acceleration and stabilization by Anderson acceleration}\label{section:acceleration}

The Fixed-Stress-L-scheme is expected to be a linearly convergent fixed-point iteration 
with the convergence rate depending on a tuning parameter. Its considered modifications 
employ a less conservative choice for the tuning parameter with the risk of failing convergence.
Consequently, we are concerned with two issues -- slow convergence and robustness with 
respect to the tuning parameter.

All presented linearization schemes in Section~\ref{section:linearization} can be 
interpreted as fixed-point iterations $\xn{i}=\FP{\xn{i-1}}=\xn{i-1}+\dFP{\xn{i-1}}$,
where $\xn{i}$ denotes the algebraic vector associated with $\pquh{n,i}$ and 
$\dFP{\xn{i-1}}$ is the actual, computed increment within the linearization scheme.
For such in general, Anderson acceleration~\cite{Anderson1965} has been demonstrated 
on several occasions to be a suitable method to accelerate convergence. Furthermore,
due to its relation to preconditioned, nonlinear GMRES\cite{Walker2011}, we also expect Anderson 
acceleration to increase robustness with respect to the tuning parameter for the 
considered linearization schemes. Both properties are justified by theoretical 
considerations in Section~\ref{section:linear-analysis:restarted-aa} and demonstrated 
numerically in Section~\ref{section:numerical-results}.

\paragraph{Scheme}

The main idea of the Anderson acceleration applied to a fixed-point iteration is to 
utilize previous iterates and mix their contributions in order to obtain a new iterate.
The method is applied as post-processing not interacting with the underlying fixed-point 
iteration. In the following, we denote AA($m$) the Anderson acceleration reusing 
$m+1$ previous iterations, such that AA(0) is identical to the original fixed point iteration.
We can apply AA($m$) to post-process the presented linearization schemes.
In compact notation, the scheme reads:

\begin{algo}[\textbf{AA($m$) accelerated $\mathcal{FP}$}]\label{algorithm:standard-aa}{\ }
\begin{algorithmic}
\State {Given: $\mathcal{FP}$, $\xn{0}$}
\For {$i$=1,2,..., until convergence}
  \State {Define depth $m_i=\min\{i-1,m\}$}
  \State {Define matrix of increments $\mathbf{F}_i= \begin{bmatrix} \dFP{\xn{i-m_i-1}}, ... , \dFP{\xn{i-1}} \end{bmatrix}$}
  \State {Minimize $\| \mathbf{F}_i \bm{\alpha} \|_2$ wrt.\ $\bm{\alpha}\in\mathbb{R}^{m_i+1}$ s.t.\ $\sum_k\! \alpha_k=1$}
  \State {Define next iterate $\xn{i} \= \sum_{k=0}^{m_i} \alpha_k \FP{\xn{k+i-m_i-1}}$}
\EndFor
\end{algorithmic}
\end{algo}

\noindent
For the specific implementation, we follow Walker and Peng~\cite{Walker2011}.
In particular, in Step~4, we solve an equivalent unconstrained minimization
problem, which is better conditioned, relatively small and cheap.
The main price to be paid is the additional storage of the vectors
 $\begin{bmatrix} \dFP{\xn{i-m-1}}, ... , \dFP{\xn{i-1}} \end{bmatrix}$ and 
 $\begin{bmatrix} \FP{\xn{i-m-1}}, ... , \FP{\xn{i-1}} \end{bmatrix}$.

\paragraph{Properties}

As post-processing Anderson acceleration does not modify the character of the 
underlying method, i.e., a coupled or decoupled character remains unchanged. In particular,
in contrast to classical preconditioning, no monolithic simulator is required.
Hence, Anderson acceleration is an attractive method in order to accelerate splitting schemes.

In many practical applications, effective acceleration can be observed. Though, there is no general,
theoretical guarantee for the Anderson acceleration to accelerate convergence of an underlying, 
convergent fixed-point iteration. Theoretically, even divergence is possible~\cite{Walker2011}. 
In the literature, so far, theoretical convergence results are solely known for contractive 
fixed-point iterations~\cite{Toth2015}. For nonlinear problems, AA($m$) is locally r-linearly
convergent with theoretical convergence rate not larger than the original contraction constant if the 
coefficients $\bm{\alpha}$ remain bounded. Without assumptions on $\bm{\alpha}$, AA(1) converges globally,
q-linearly in case the contraction constant is sufficiently small. Both results only guarantee the lack 
of deterioration but not acceleration.

For a special, linear case, in Section~\ref{section:linear-analysis:restarted-aa}, we show global convergence 
and theoretical acceleration for a variant of AA(1), fortifying the potential of Anderson acceleration.
In particular, Corollary~\ref{corollary:aa:convergence} predicts the ability of the Anderson acceleration
to increase robustness, allowing non-contractive fixed-point iterations to converge.
This motivates to apply AA($m$) also to accelerate possibly diverging Newton-like methods as the monolithic 
Newton method and the Fixed-Stress-Newton method with the risk of loosing potential, quadratic convergence.

\section{Theoretical contraction and acceleration for the restarted Anderson acceleration}\label{section:linear-analysis:restarted-aa}

For a special linear case, we prove global convergence of a restarted version of the Anderson
acceleration. In particular, convergence for 
non-contractive fixed-point iterations and effective acceleration for a class of contractive 
fixed-point iterations is shown.

\subsection{Restarted Anderson acceleration}
The original Anderson acceleration AA($m$) constantly utilizes the full set of $m$ previous
iterates. By defining the depth $m_i^\star=\text\{i-1 \text{ mod }m+1,m\}$ in the first step of 
Algorithm~\ref{algorithm:standard-aa} and apart from that following the remaining steps, 
we define a restarted version AA$^\star$($m$) of AA($m$), closer related to GMRES($m$). 
In words, in each iteration we update the set of considered iterates by the most current 
iterate. And in case, the number of iterates becomes $m+1$, we flush the memory and restart 
filling it again. In particular, for $m=1$, the algorithm reads:

\begin{algo}[\textbf{AA$^\star$($1$) accelerated $\mathcal{FP}$}]\label{algorithm:restarted-aa}{\ }
\begin{algorithmic}
\State {Given: $\x{0}$}
\For {$i$=0,2,4,..., until convergence}
  \State {Set $\x{i+1} = \FP{\x{i}}$}
  \State {Minimize $\left\| \dFP{\x{i+1}} + \alpha^{i+1} (\dFP{\x{i}} - \dFP{\x{i+1}}) \right\|$ wrt.\ $\alpha^{(i+1)}\in\mathbb{R}$}
  \State {Set $\x{i+2} = \FP{\x{i+1}} + \alpha^{(i+1)} (\FP{\x{i}} - \FP{\x{i+1}})$}
\EndFor
\end{algorithmic}
\end{algo}

\noindent
From~\cite{Toth2015}, it follows directly, that for $\mathcal{FP}$, a linear contraction, AA$^\star$($1$)
converges globally with convergence rate at most equal the contraction constant of $\mathcal{FP}$. In the following,
we extend the result to a special class of non-contractions.

\subsection{Convergence result}
For the convergence results, cf.~Lemma~\ref{lemma:anderson-star-1} and 
Corollary~\ref{corollary:aa:acceleration},~\ref{corollary:aa:convergence},
we make the following assumptions:

\begin{itemize}
 
 \item[(C1)] $\FP{\x{}} = \A \x{} + \b$ defines the Richardson iteration for $(\Id - \A)\x{} = \b$, $\A\in\mathbb{R}^{n\times n}$, $n>1$, $\b\in\mathbb{R}^n$.
 
 \item[(C2)] $\A$ is symmetric, and hence, $\A$ is orthogonally diagonalizable and there 
             exists an orthogonal basis of eigenvectors $\{\mathbf{v}_j\}_j$ and a corresponding
             set of eigenvalues $\{\lambda_j\}_j$ satisfying $\A\mathbf{v}_j=\lambda_j \mathbf{v}_j$.
             
 \item[(C3)] There exists a unique $\x{\star}$ such that $\FP{\x{\star}} = \x{\star}$, i.e., $\Id-\A$ is invertible.
 
 \item[(C4)] The initial iterate $\x{0}$ is chosen such that the initial error $\x{0} - \x{\star}\in\text{span}\{ \mathbf{v}_1, \mathbf{v}_2\}$,
 where $\mathbf{v}_1,\mathbf{v}_2$ are two orthogonal eigenvectors of $\A$.
 To avoid a trivial case, we assume $\lambda_1,\lambda_2\neq 0$.
 
\end{itemize}

\noindent
Then we are able to relate the errors between iterations of AA$^\star$(1), allowing to prove further convergence and acceleration
results, cf. Corollary~\ref{corollary:aa:convergence} and Corollary~\ref{corollary:aa:acceleration}.
All in all, the proof employs solely elementary calculations. However, as we are not aware of a general result 
of same type in the literature, we present the proof.

\begin{lemma}[Main result]\label{lemma:anderson-star-1}
 Let the Assumption~(C1)--(C4) be satisfied and let $\{\x{i}\}_i$ define the sequence defined by AA$^\star$($1$)
 applied to $\mathcal{FP}$. Furthermore, let $\e{i}=\x{i}-\x{\star}$ denote the error. 
 Then it holds 
 \begin{align*}
  \| \e{i+4} \| \leq r(\lambda_1,\lambda_2) \| \e{i} \|, \quad i=0,4,8,12,...
 \end{align*}
 for
 \begin{align*}
  r(\lambda_1,\lambda_2) = \frac{\lambda_1^2\lambda_2^2 (\lambda_2 - \lambda_1)^2}{\left(|\lambda_1(\lambda_1-1)| + |\lambda_2(\lambda_2-1)|\right)^2}.
 \end{align*}
\end{lemma}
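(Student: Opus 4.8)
The plan is to reduce everything to the two-dimensional eigenspace $\mathrm{span}\{\vec{v}_1,\vec{v}_2\}$ and track the error through four steps of AA$^\star$(1). By Assumption~(C4) and linearity, the error stays in this invariant subspace for all iterations, so I can write $\e{i} = a_i \vec{v}_1 + b_i \vec{v}_2$ and work entirely with the pair of scalars $(a_i,b_i)$. The basic recursion of the underlying Richardson iteration is $\e{i+1} = \A\e{i}$, i.e.\ $(a_{i+1},b_{i+1}) = (\lambda_1 a_i, \lambda_2 b_i)$. First I would write out the increment $\dFP{\x{}} = \FP{\x{}} - \x{} = (\A - \Id)\x{} + \b$, and observe that on the error it acts as $\dFP{\x{i}} \leftrightarrow ((\lambda_1-1)a_i, (\lambda_2-1)b_i)$, since $\dFP{\x{i}} = (\A-\Id)\e{i}$.

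Next I would carry out one AA$^\star$(1) ``double step'' symbolically. Starting from $\e{i}$ (a restart point), step~3 gives $\e{i+1} = \A\e{i}$. Step~4 minimizes $\|\dFP{\x{i+1}} + \alpha(\dFP{\x{i}} - \dFP{\x{i+1}})\|$ over $\alpha\in\mathbb{R}$; since $\dFP{\x{i}}$ and $\dFP{\x{i+1}}$ are explicit vectors in the $\vec{v}_1,\vec{v}_2$ basis, this is a scalar least-squares problem whose solution $\alpha^{(i+1)}$ is a ratio of inner products that I can compute in closed form in terms of $\lambda_1,\lambda_2$ and $(a_i,b_i)$. Then $\e{i+2} = \A\e{i+1} + \alpha^{(i+1)}(\A\e{i} - \A\e{i+1})$, which again is an explicit vector in the two-dimensional space. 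The key algebraic simplification I expect here is that after the optimal mixing, the new error components factor nicely — plausibly $\e{i+2}$ becomes proportional to a fixed vector (independent of the incoming ratio $a_i : b_i$) times a scalar depending on $(a_i,b_i)$, because the one-parameter minimization in a plane ``kills'' one degree of freedom. I would then repeat the same double step from $\e{i+2}$ (which by the restart rule $m_i^\star$ is again a fresh restart after two iterates have been collected) to get $\e{i+4}$, and read off the ratio $\|\e{i+4}\|/\|\e{i}\|$.

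The main obstacle I anticipate is the bookkeeping of the two successive optimal-$\alpha$ computations and recognizing the cancellation that produces the clean closed form $r(\lambda_1,\lambda_2) = \lambda_1^2\lambda_2^2(\lambda_2-\lambda_1)^2 / (|\lambda_1(\lambda_1-1)| + |\lambda_2(\lambda_2-1)|)^2$. In particular, the absolute values in the denominator suggest that the minimization introduces a term like $|\lambda_1(\lambda_1-1)| + |\lambda_2(\lambda_2-1)|$ via a Cauchy–Schwarz-type equality case, or via the explicit least-squares residual $\|r\|^2 = \|u\|^2 - \langle u,v\rangle^2/\|v\|^2$ with the right $u,v$; getting the signs and the combination of the two steps to collapse to exactly this expression is the delicate part. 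A secondary subtlety: after the first double step the error might \emph{not} be a fixed direction but rather its direction could be pinned down (e.g.\ to $\vec{v}_1 - \vec{v}_2$ up to scaling), so the second double step sees a specific ratio, and one must verify the claimed bound holds with equality in that generic situation and as an inequality if degeneracies ($a_i=0$ or $b_i=0$) occur. I would handle the degenerate cases ($\e{i}$ aligned with a single eigenvector) separately, noting that then AA$^\star$(1) reduces to exact solution in one step, so the bound is trivially satisfied. Once $\|\e{i+4}\| \leq r(\lambda_1,\lambda_2)\|\e{i}\|$ is established for the generic case and checked for degenerate ones, the statement follows since $i=0,4,8,\ldots$ are exactly the restart points where a fresh pair of double steps begins.
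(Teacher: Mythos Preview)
Your overall strategy---reduce to the invariant plane $\mathrm{span}\{\vec v_1,\vec v_2\}$, write $\e{i}=a\vec v_1+b\vec v_2$, and push the pair $(a,b)$ through four steps of AA$^\star(1)$---is exactly the paper's approach. The discrepancy is in what you expect the computation to produce. You anticipate that after one double step the error direction is ``pinned down'' to something like $\vec v_1-\vec v_2$, so that the second double step sees a fixed ratio and the bound holds with \emph{equality} in the generic case. This is not what happens: a short calculation shows that the ratio of the components of $\e{i+2}$ still depends on $b/a$ (the first Anderson step does not collapse the direction), and consequently the full four-step amplification factor $\|\e{i+4}\|/\|\e{i}\|$ is a genuine function of the initial direction. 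The stated $r(\lambda_1,\lambda_2)$ is obtained only after \emph{maximising} this amplification over all admissible directions; it is a worst-case bound, not a generic value.

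Concretely, the paper parametrises the direction by $\gamma=\beta_1^2\in[0,1]$ (where $\beta_1,\beta_2$ are the normalised components of $(\A-\Id)^2\e{i}$), shows that the relevant eigenvalue of the four-step error-propagation operator equals
\[
\lambda_1^2\lambda_2^2(\lambda_2-\lambda_1)^2\,\frac{\gamma(1-\gamma)}{(1-\gamma)\lambda_1^2(\lambda_1-1)^2+\gamma\lambda_2^2(\lambda_2-1)^2},
\]
and then maximises the last fraction over $\gamma$ to obtain $(|\lambda_1(\lambda_1-1)|+|\lambda_2(\lambda_2-1)|)^{-2}$. This optimisation is precisely where the absolute values you noticed enter; it is not a Cauchy--Schwarz equality case from the least-squares residual, but the maximum of $\gamma(1-\gamma)/[(1-\gamma)A^2+\gamma B^2]$ attained at $\gamma=A/(A+B)$. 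Your plan will succeed once you drop the ``pinned direction'' expectation and add this maximisation step; without it you will be left with a direction-dependent ratio and no route to the closed form for $r$.
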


\begin{proof}
First, an iteration-dependent error propagation matrix is derived, and second, an upper
bound for its spectral radius is computed. For this purpose, let us ignore for a moment Assumption~(C4). 

\paragraph{Iterative error propagation}
As we intend to relate $\e{i+4}$ with $\e{i}$, we explicitly write out the first four iterates and the corresponding
errors. Given $\x{i}$, by using $\b = \x{\star} - \A \x{\star}$ and $\x{i} - \x{i+1} = \e{i} - \e{i+1}$, we obtain
\begin{align}
 \x{i+1} &= \A\x{i}   + \b,    					   & \e{i+1} &= \A\e{i},                                            \label{proof:definition:error-1} \\
 \x{i+2} &= \A\x{i+1} + \b + \alpha^{(i+1)} \A(\x{i} - \x{i+1}),   & \e{i+2} &= \A\e{i+1} + \alpha^{(i+1)} \A(\e{i} - \e{i+1}),     \label{proof:definition:error-2} \\
 \x{i+3} &= \A\x{i+2} + \b,                                        & \e{i+3} &= \A\e{i+2},                                          \label{proof:definition:error-3} \\
 \x{i+4} &= \A\x{i+3} + \b + \alpha^{(i+3)} \A(\x{i+3} - \x{i+2}), & \e{i+4} &= \A\e{i+3} + \alpha^{(i+3)} \A(\e{i+2} - \e{i+3}).   \label{proof:definition:error-4}
\end{align}
By plugging all together, we obtain
\begin{align*}
 \e{i+4} 
 &= \A(\A + \alpha^{(i+3)} (\Id-\A)) \A(\A + \alpha^{(i+1)} (\Id-\A)) \e{i}.
\end{align*}
It suffices to bound the largest eigenvalue of the error propagation matrix 
$\A(\A + \alpha^{(i+3)} (\Id-\A)) \A(\A + \alpha^{(i+1)} (\Id-\A))$. From Assumption~(C2) it follows that
$\{\mathbf{v}_i\}_i$ defines an orthogonal basis of eigenvectors for the error propagation matrix
with corresponding eigenvalues $\{\tilde{\lambda}_j\}_j$ defined by
\begin{align}\label{proof:eigenvalue:error-propagation}
 \tilde{\lambda}_j=\lambda_j^2 (\lambda_j + \alpha^{(i+1)} (1 - \lambda_j)) (\lambda_j + \alpha^{(i+3)} (1-\lambda_j)).
\end{align}

\paragraph{Explicit definition of $\alpha^{(i+1)}$ and $\alpha^{(i+3)}$}
The minimization problem in Algorithm~\ref{algorithm:restarted-aa} can be solved explicitly, by solving adequate normal equations.
It follows, that
\begin{align*}
 \alpha^{(i+1)} &= \frac{(\dFP{\x{i+1}} - \dFP{\x{i}}) \cdot \dFP{\x{i+1}}}{(\dFP{\x{i+1}} - \dFP{\x{i}}) \cdot(\dFP{\x{i+1}} - \dFP{\x{i}})}.
\end{align*}
After employing simple arithmetics and using Eq.~\eqref{proof:definition:error-1}, we obtain
\begin{align*}
 \dFP{\x{i+1}}               &= (\A-\Id) \x{i+1} + \b = (\A - \Id) \e{i+1}  = (\A - \Id) \A \e{i} = \A(\A - \Id) \e{i},\\
 \dFP{\x{i+1}} - \dFP{\x{i}} &= (\A-\Id) (\x{i+1} - \x{i}) = (\A - \Id) (\e{i+1} - \e{i}) = (\A - \Id)^2\e{i}.
\end{align*}
Consequently, it holds
\begin{align}\label{proof:alpha(k+1)}
 \alpha^{(i+1)} 
 = \frac{ ((\A - \Id)^2\e{i}) \cdot (\A(\A - \Id)\e{i})}{ \|(\A - \Id)^2\e{i}\|^2} 
 = \hate{i} \cdot \A (\A - \Id)^{-1} \hate{i},
\end{align}
where we define $\hate{i} = (\A - \Id)^2 \e{i} / \| (\A - \Id)^2 \e{i} \|$, satisfying $\| \hate{i} \|=1$. 
Analogously, using Eq.~\eqref{proof:definition:error-1}--\eqref{proof:definition:error-4}, we obtain
\begin{align}\label{proof:alpha(k+3)}
 \alpha^{(i+3)}
 = \frac{ ((\A - \Id)^2\e{i+2}) \cdot (\A(\A - \Id)\e{i+2})}{ \|(\A - \Id)^2\e{i+2}\|^2 }
 = \frac{ \hate{i} \cdot \A^3(\A - \Id)^{-1}(\A+\alpha^{(i+1)}(\Id - \A))^2 \hate{i}}{ \|\A(\A+\alpha^{(i+1)}(\Id - \A))\hate{i}\|^2 }.
\end{align}

\paragraph{Decomposition of $\hate{i}$ and useful computations}
Employing the orthogonal eigenvector basis $\{\mathbf{v}_j\}_j$, we can decompose $\hate{i} = \sum_j \beta_j \mathbf{v}_j$.
As $\| \hate{i} \| = 1$ it holds $\sum_j \beta_j^2 = 1$. By inserting the decomposition into Eq.~\eqref{proof:alpha(k+1)},
we obtain
\begin{align*}
 \alpha^{(i+1)} = \sum_j \beta_j^2 \frac{\lambda_j}{\lambda_j -1}.
\end{align*}
Hence, for the eigenvalues of $\A+\alpha^{(i+1)}(\Id - \A)$ and also the second factor of Eq.~\eqref{proof:eigenvalue:error-propagation}, it follows
\begin{align}
 \eta_j(\betaWeight)
 &:=\lambda_j + \alpha^{(i+1)} (1-\lambda_j)                                                                
 = \sum_{k \neq j} \beta_k^2 \frac{ \lambda_k - \lambda_j}{\lambda_k - 1}.  \label{proof:eigenvalue:error-propagation:second-factor}
\end{align}
Hence, for the contribution in the denominator of Eq.~\eqref{proof:alpha(k+3)}, we obtain
\begin{align*}
 \A(\A+\alpha^{(i+1)}(\Id - \A)) \hate{i}
 &= \sum_j \beta_j \lambda_j \eta_j(\betaWeight) \mathbf{v}_j.
\end{align*}
By plugging in into Eq.~\eqref{proof:alpha(k+3)} and using orthogonality of $\{\mathbf{v}_j\}_j$, we obtain for $\alpha^{(i+3)}$
\begin{align*}
 \alpha^{(i+3)} = \left[\sum_j \beta_j^2 \lambda_j^2 \eta_j(\betaWeight)^2\right]^{-1} 
 \, \left[\sum_j \beta_j^2 \frac{\lambda_j^3}{\lambda_j-1} \eta_j(\betaWeight)^2\right].
\end{align*}
By employing some arithmetics, for the third factor of Eq.~\eqref{proof:eigenvalue:error-propagation}, it follows
\begin{align}\label{proof:eigenvalue:error-propagation:third-factor}
 \lambda_j + \alpha^{(i+3)} (1-\lambda_j)
 &= \left[ \sum_k \beta_k^2 \lambda_k^2 \eta_k(\betaWeight)^2 \right]^{-1} 
 \, \left[  \sum_{k \neq j} \beta_k^2 \lambda_k^2 \frac{\lambda_k - \lambda_j}{\lambda_k - 1}\eta_k(\betaWeight)^2 \right].
\end{align}

\paragraph{Resulting eigenvalues}
By inserting Eq.~\eqref{proof:eigenvalue:error-propagation:second-factor}--\eqref{proof:eigenvalue:error-propagation:third-factor} 
into Eq.~\eqref{proof:eigenvalue:error-propagation}, we obtain for the eigenvalues of the iteration-dependent error propagation matrix 
$\A(\A + \alpha^{(i+3)} (\A - \Id)) \A(\A + \alpha^{(i+1)} (\A - \Id))$
\begin{align} \label{general-eigenvalue-bound}
 \tilde{\lambda}_j
 &= \left[ \sum_k \beta_k^2 \lambda_k^2 \eta_k(\betaWeight)^2 \right]^{-1}
 \, \left[\lambda_j^2 \eta_j(\betaWeight) \sum_{k \neq j} \beta_k^2 \lambda_k^2 \eta_k(\betaWeight)^2 \frac{\lambda_k - \lambda_j}{\lambda_k - 1}\right].
\end{align}

\paragraph{Analysis for special decomposition}
By Assumption~(C4), the initial error is spanned by two orthogonal eigenvectors. Without loss of generality
let $\e{0}\in\text{span}\{ \mathbf{v}_1,\mathbf{v}_2 \}$. Then also $\hate{i}\in\text{span}\{ \mathbf{v}_1,\mathbf{v}_2\}$
and there exist $\beta_1,\beta_2\in\mathbb{R}$ satisfying $\hate{i}=\beta_1 \mathbf{v}_1 + \beta_2 \mathbf{v}_2$ and 
$\beta_1^2+\beta_2^2=1$. Consequently, Eq.~\eqref{general-eigenvalue-bound} for $j=1$ reduces to
\begin{align*}
 \tilde{\lambda}_1
 &= \lambda_1^2\lambda_2^2 (\lambda_2 - \lambda_1)^2  \frac{(1-\gamma) \gamma }{(1-\gamma) \lambda_1^2 (\lambda_1-1)^2 + \gamma \lambda_2^2 (\lambda_2-1)^2},
\end{align*}
where $\gamma=\beta_1^2\in[0,1]$. Maximizing the second factor with respect to $\gamma\in[0,1]$ results in the upper bound
\begin{align*}
 |\tilde{\lambda}_1|
 &\leq \frac{\lambda_1^2\lambda_2^2 (\lambda_2 - \lambda_1)^2}{\left(|\lambda_1(\lambda_1-1)| + |\lambda_2(\lambda_2-1)|\right)^2} =: r(\lambda_1,\lambda_2).
\end{align*}
Due to symmetry it holds $|\tilde{\lambda}_j| \leq r(\lambda_1,\lambda_2)$, $j=1,2$. Consequently, we obtain the result.
\end{proof}

Using Lemma~\ref{lemma:anderson-star-1}, we are able to show convergence and actual acceleration of AA$^\star$($1$).

\begin{corollary}[AA$^\star$(1) accelerates contractive $\mathcal{FP}$]\label{corollary:aa:acceleration}
 Let the Assumption~(C1)--(C4) be satisfied. 
 Let $\rho(\A)<1$, where $\rho(\A)$ denotes the spectral radius of $\A$. Then it holds $r(\lambda_1,\lambda_2) < \rho(\A)^4$.
 Consequently, AA$^\star$(1) is effectively accelerating the underlying fixed-point iteration, cf.~Assumption~(C1).
\end{corollary}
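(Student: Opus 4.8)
The plan is to establish the two claimed inequalities, $r(\lambda_1,\lambda_2) < \rho(\A)^4$ and the resulting acceleration, by a direct, elementary estimate on the closed-form expression for $r$ coming from Lemma~\ref{lemma:anderson-star-1}. First I would set $\rho=\rho(\A)$ and recall that, by Assumption~(C2)--(C4), the relevant eigenvalues are $\lambda_1,\lambda_2$, both nonzero, and that $\rho = \max\{|\lambda_1|,|\lambda_2|\}$ restricted to the invariant subspace $\mathrm{span}\{\mathbf{v}_1,\mathbf{v}_2\}$ (the other eigenvalues are irrelevant since the error stays in this subspace for all iterates, as the error-propagation matrix in the proof of Lemma~\ref{lemma:anderson-star-1} preserves it). Then $|\lambda_j| \leq \rho < 1$ for $j=1,2$, so in particular $|\lambda_j - 1| = 1 - |\lambda_j| \geq 1 - \rho > 0$ when $\lambda_j>0$, and more generally $|\lambda_j(\lambda_j-1)| \geq |\lambda_j|(1-\rho)$ is too weak; instead I would use $|\lambda_j - 1| \geq 1 - |\lambda_j| \geq 1-\rho$ only as a sanity check and work directly with the full quotient.

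The key step is the numerator--denominator comparison. Writing
\begin{align*}
 r(\lambda_1,\lambda_2) = \frac{\lambda_1^2\lambda_2^2(\lambda_2-\lambda_1)^2}{\bigl(|\lambda_1(\lambda_1-1)| + |\lambda_2(\lambda_2-1)|\bigr)^2},
\end{align*}
I would bound the denominator from below. Since $|\lambda_1(\lambda_1-1)| + |\lambda_2(\lambda_2-1)| \geq |\lambda_2(\lambda_2-1)| = |\lambda_2|\,|\lambda_2-1|$ and likewise $\geq |\lambda_1|\,|\lambda_1-1|$, and since for $|\lambda|<1$ one has $|\lambda-1| = 1-|\lambda|\cdot\mathrm{sgn}$... more cleanly: $|\lambda_j-1| > |\lambda_j|$ fails in general, so instead I would exploit $|\lambda_j - 1| \geq 1 - |\lambda_j|$ together with $|\lambda_2 - \lambda_1| \leq |\lambda_1| + |\lambda_2|$... this also is not tight enough on its own. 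The cleanest route: note that for any $a,b$ with $|a|,|b| < 1$, $|a-b|^2 \leq$ something involving $|a(a-1)|$ and $|b(b-1)|$; concretely, $|a(a-1)| = |a|\cdot|1-a| \geq |a|\cdot(1-|a|)$ and a short convexity/AM--GM argument shows $|a|(1-|a|) + |b|(1-|b|) \geq$ a quantity controlling $|a-b|$. I expect the sharp inequality to reduce, after dividing numerator and denominator, to showing
\begin{align*}
 \frac{|\lambda_2-\lambda_1|}{|\lambda_1(\lambda_1-1)| + |\lambda_2(\lambda_2-1)|} < \frac{\rho^2}{|\lambda_1|\,|\lambda_2|},
\end{align*}
equivalently $|\lambda_1|\,|\lambda_2|\,|\lambda_2-\lambda_1| < \rho^2\bigl(|\lambda_1(\lambda_1-1)| + |\lambda_2(\lambda_2-1)|\bigr)$; since $\rho^2 = \max\{\lambda_1^2,\lambda_2^2\}$ this should follow by case analysis on the sign of the $\lambda_j$ and on which is larger in absolute value, using $|\lambda_j - 1| \geq 1-\rho$ on the dominant term and $|\lambda_1 - \lambda_2| \leq |\lambda_1| + |\lambda_2| \leq 2\rho$.

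The main obstacle I anticipate is organizing the case distinctions cleanly: the signs of $\lambda_1,\lambda_2$ matter (they lie in $(-1,1)\setminus\{0\}$, so all four sign combinations must be checked), and one must avoid a sloppy bound that loses the strict inequality or fails when $\lambda_1,\lambda_2$ are close together (where $|\lambda_2-\lambda_1|\to 0$ and $r\to 0$, so that regime is actually easy) versus when one is near $\pm 1$. Once $r(\lambda_1,\lambda_2) < \rho^4$ is in hand, the acceleration conclusion is immediate: Lemma~\ref{lemma:anderson-star-1} gives $\|\e{i+4}\| \leq r(\lambda_1,\lambda_2)\|\e{i}\|$, so the asymptotic contraction factor per four steps of AA$^\star$(1) is at most $r < \rho^4$, whereas four steps of the plain Richardson iteration $\FP$ contract the error (in the worst case over $\mathrm{span}\{\mathbf{v}_1,\mathbf{v}_2\}$) by exactly $\rho^4$; hence AA$^\star$(1) strictly outperforms the underlying fixed-point iteration, which is the asserted effective acceleration.
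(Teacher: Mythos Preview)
Your approach is fundamentally different from the paper's: the paper offers no analytical argument at all and simply plots the ratio $r(\lambda_1,\lambda_2)/\max\{|\lambda_1|^4,|\lambda_2|^4\}$ over $[-1,1]^2$, pointing to a figure as the entire justification. An honest elementary estimate along your lines would therefore be a genuine improvement over what the paper actually does.

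That said, your proposal does not yet constitute one. The sketch contains several abandoned threads, and the decisive step---the case analysis establishing $|\lambda_1|\,|\lambda_2|\,|\lambda_2-\lambda_1| < \rho^2\bigl(|\lambda_1(\lambda_1-1)| + |\lambda_2(\lambda_2-1)|\bigr)$---is asserted (``this should follow by case analysis'') rather than carried out. The specific tools you name for it, $|\lambda_j-1|\geq 1-\rho$ and $|\lambda_1-\lambda_2|\leq 2\rho$, are too lossy: they discard precisely the sign structure that drives the inequality and will not close the gap across all four sign quadrants. More seriously, the \emph{strict} inequality you (and the paper) target is actually false on the line $\lambda_1=-\lambda_2$: a direct computation gives
\[
 r(a,-a)=\frac{a^2\cdot a^2\cdot(2a)^2}{\bigl(a(1-a)+a(1+a)\bigr)^2}=\frac{4a^6}{4a^2}=a^4=\max\{|\lambda_1|^4,|\lambda_2|^4\}
\]
for every $a\in(0,1)$. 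Since $n=2$ is permitted by Assumption~(C1), one may have $\rho(\A)=\max\{|\lambda_1|,|\lambda_2|\}$, and then only $r\leq\rho(\A)^4$ (with equality exactly on that anti-diagonal) can hold. This edge case undermines the paper's figure-based argument just as much as yours; any rigorous version must either weaken the conclusion to a non-strict inequality or explicitly exclude $\lambda_1=-\lambda_2$.
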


\begin{proof}
By plotting $r(\lambda_1,\lambda_2)/\text{max}\left\{|\lambda_1|^4,|\lambda_2|^4\right\}$, 
we demonstrate $r(\lambda_1,\lambda_2)<\text{max}\left\{|\lambda_1|^4,|\lambda_2|^4\right\}\leq\rho(\A)$
for $(\lambda_1,\lambda_2)\in[-1,1]\times[-1,1]$ , cf.~Fig.~\ref{figure:proof:anderson-accelerates}.
\end{proof}
\begin{figure}[h!]
\centering
\subfloat[\label{figure:proof:anderson-accelerates} Acceleration rate for AA$^\star$(1) compared to theoretical contraction rate of Richardson iteration
for contractive $2\times 2$ matrices.]
{\includegraphics[width=7cm]{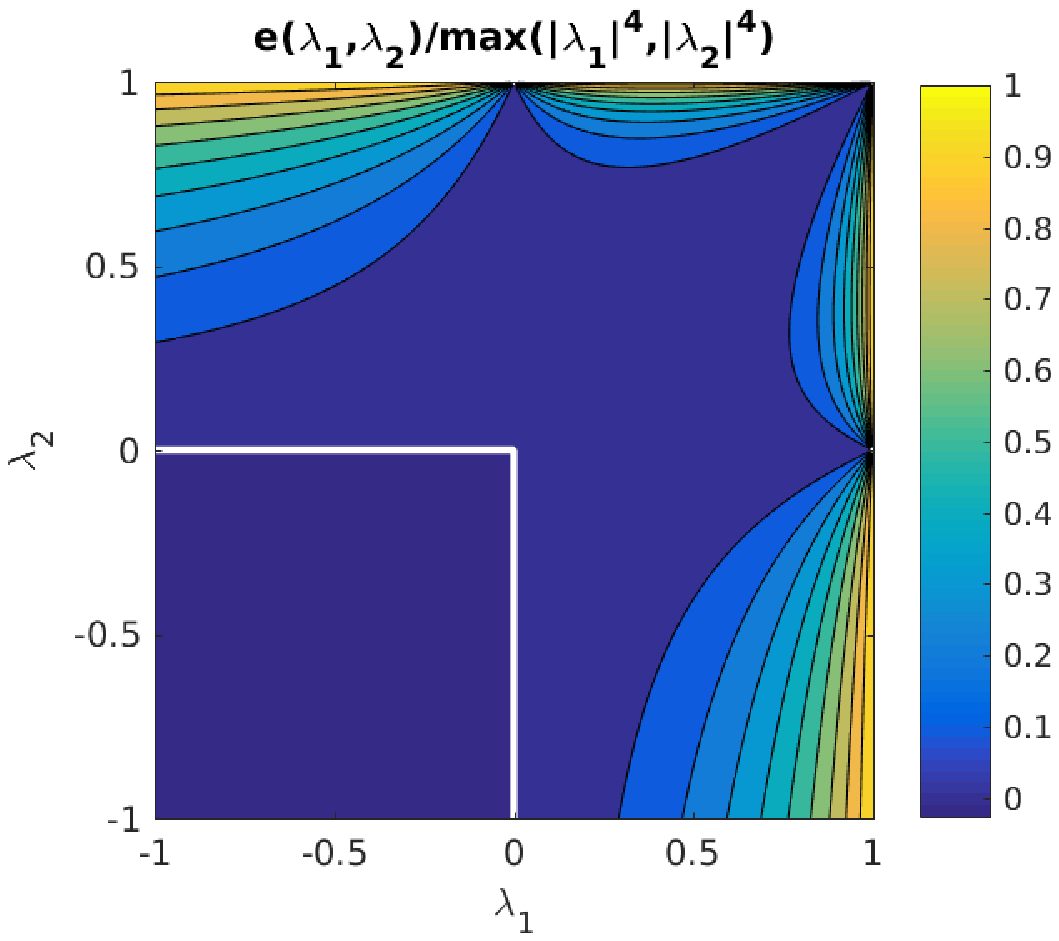}}
\hspace{0.5cm}
\subfloat[\label{figure:proof:anderson-convergence} The $(\lambda_1,\lambda_2)$-convergence plane for AA$^\star$(1) applied to $2\times2$ linear problems,
characterizing which matrices guarantee AA$^\star$ to define a uniform contraction.]
{\includegraphics[width=7cm]{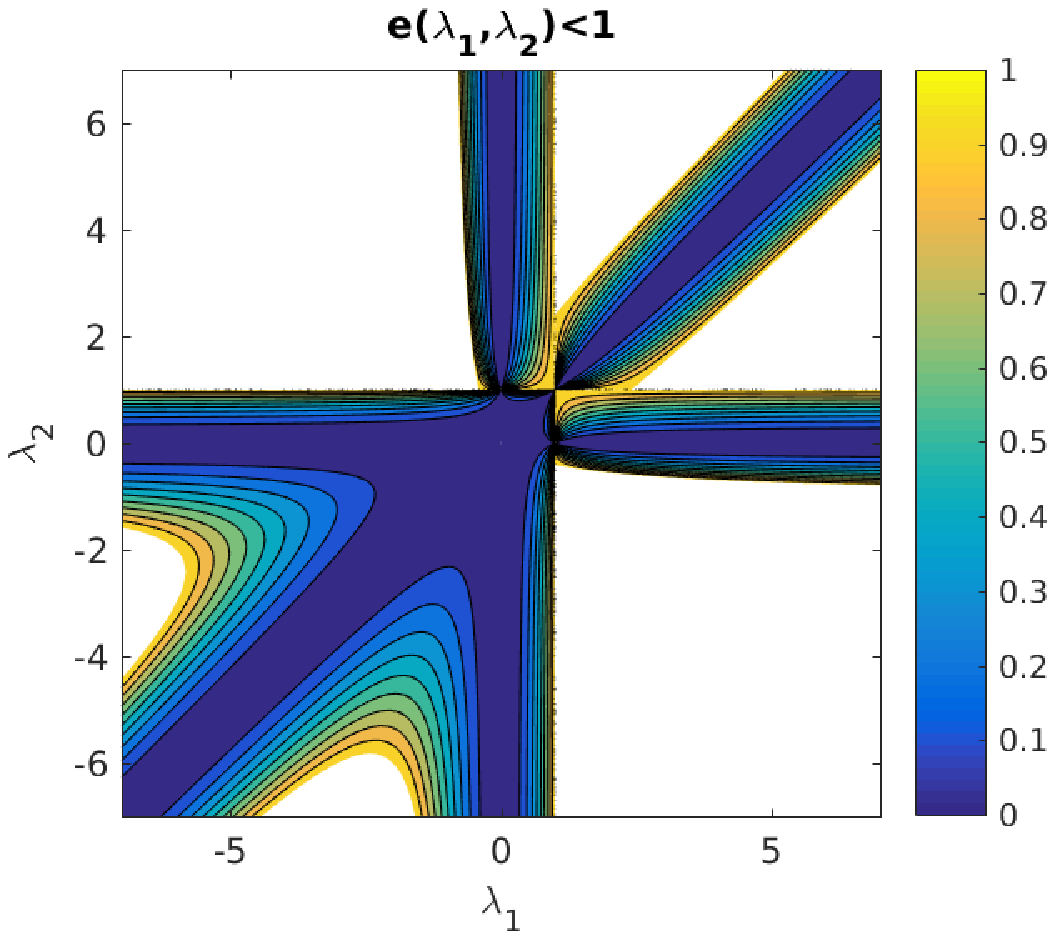}}
\caption{\label{figure:proof:planes} Acceleration and convergence factor for the restarted AA$^\star$(1).}
\end{figure}

\begin{corollary}[AA$^\star$(1) converges for non-contractive $\mathcal{FP}$]\label{corollary:aa:convergence}
 Let the Assumption~(C1)--(C4) be satisfied.  
 Let $\A$ be positive definite with at most one eigenvalue among $\{\lambda_1,\lambda_2\}$ larger than $1$ 
 and none equal to $1$. Then AA$^\star$(1) converges for the underlying non-contractive 
 fixed-point iteration, cf.~Assumption~(C1).
\end{corollary}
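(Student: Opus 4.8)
The plan is to deduce the statement from Lemma~\ref{lemma:anderson-star-1} by showing that the factor $r(\lambda_1,\lambda_2)$ appearing there is strictly less than one under the stated hypotheses; everything else is routine bookkeeping.

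\emph{Reduction to $r(\lambda_1,\lambda_2)<1$.} Suppose $r(\lambda_1,\lambda_2)<1$ has been established. Then by Lemma~\ref{lemma:anderson-star-1} the subsequence of errors satisfies $\|\e{4k}\|\leq r(\lambda_1,\lambda_2)^{k}\,\|\e{0}\|\to 0$. For the intermediate iterates I would reuse the explicit error recursions from the proof of that lemma: $\e{i+1}=\A\e{i}$, $\e{i+2}=\A\bigl(\A+\alpha^{(i+1)}(\Id-\A)\bigr)\e{i}$ with $|\alpha^{(i+1)}|\leq\|\A(\A-\Id)^{-1}\|$ by Eq.~\eqref{proof:alpha(k+1)} (finite, since $\Id-\A$ is invertible by (C3)), and $\e{i+3}=\A\e{i+2}$. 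Hence $\|\e{4k+j}\|\leq C\,\|\e{4k}\|$ for $j\in\{1,2,3\}$ with $C$ independent of $k$, so $\|\e{i}\|\to 0$, i.e.\ AA$^\star$(1) converges to $\x{\star}$.

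\emph{The inequality $r(\lambda_1,\lambda_2)<1$.} Since $\A$ is positive definite, $\lambda_1,\lambda_2>0$, and $\sqrt{r(\lambda_1,\lambda_2)}=\lambda_1\lambda_2\,|\lambda_2-\lambda_1|\,/\,\bigl(|\lambda_1(\lambda_1-1)|+|\lambda_2(\lambda_2-1)|\bigr)$, so the claim is equivalent to
\[
 \lambda_1\lambda_2\,|\lambda_2-\lambda_1| \;<\; |\lambda_1(\lambda_1-1)|+|\lambda_2(\lambda_2-1)|.
\]
Assume w.l.o.g.\ $\lambda_1\leq\lambda_2$. If $\lambda_1=\lambda_2$ the left side vanishes while the right side is positive (as $\lambda_i\neq 1$), so there is nothing to prove. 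If $\lambda_1<\lambda_2$, the assumption ``at most one eigenvalue among $\{\lambda_1,\lambda_2\}$ larger than one, none equal to one'' forces $\lambda_1<1$, and it leaves exactly two cases. In the case $0<\lambda_1<\lambda_2<1$, I would apply the triangle inequality $|\lambda_2-\lambda_1|\leq|\lambda_1-1|+|\lambda_2-1|$ and then verify that the remaining gap, $\lambda_1|\lambda_1-1|(1-\lambda_2)+\lambda_2|\lambda_2-1|(1-\lambda_1)$, is positive, being a sum of products of positive factors. In the case $0<\lambda_1<1<\lambda_2$, the triangle bound is too weak, so I would instead resolve the absolute values explicitly and compute directly that the right-minus-left difference factors as $(\lambda_2-\lambda_1)(1-\lambda_1)(\lambda_2-1)$, a product of three strictly positive numbers.

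The only genuinely delicate step is the sign bookkeeping together with the algebraic factorization in the mixed case $\lambda_1<1<\lambda_2$; the rest is elementary arithmetic. I would also note that the eigenvalue restriction is essential and not an artifact of the argument: for two eigenvalues both larger than one and sufficiently separated (for instance $\lambda_1=1.5$, $\lambda_2=10$) one computes $r(\lambda_1,\lambda_2)>1$, so a hypothesis of this type cannot be avoided.
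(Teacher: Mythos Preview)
Your argument is correct and in fact more self-contained than the paper's. The paper proceeds differently: for the case $0<\lambda_1,\lambda_2<1$ it simply invokes Corollary~\ref{corollary:aa:acceleration}, whose own proof is a graphical verification (a plot of $r(\lambda_1,\lambda_2)/\max\{|\lambda_1|^4,|\lambda_2|^4\}$), while for the mixed case $\lambda_1>1$, $\lambda_2\in(0,1)$ it observes that $r(1,\lambda_2)=1$ and that $\partial_{\lambda_1} r(\lambda_1,\lambda_2)<0$ on $(1,\infty)\times(0,1)$, hence $r<1$ by monotonicity. Your route---a direct inequality via the triangle bound and positivity in the contractive case, and the explicit factorization $(\lambda_2-\lambda_1)(1-\lambda_1)(\lambda_2-1)$ in the mixed case---avoids both the plot and the derivative computation, giving a fully elementary algebraic proof. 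You also make explicit why the intermediate iterates $\e{4k+1},\e{4k+2},\e{4k+3}$ tend to zero (using the uniform bound on $\alpha^{(i+1)}$ from Eq.~\eqref{proof:alpha(k+1)} and invertibility of $\Id-\A$), a detail the paper leaves implicit. The paper's monotonicity argument is perhaps more conceptually suggestive of \emph{why} the bound improves as $\lambda_1$ moves away from $1$, but your factorization is sharper and immediately verifiable.
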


\begin{proof}
Due to symmetry, it is sufficient, to consider solely $(\lambda_1,\lambda_2) \in (\mathbb{R}_+\setminus \{1\}) \times (0,1)$. 
For $\lambda_1<1$, the result follows immediately from Corollary~\ref{corollary:aa:acceleration}. 
Let $\lambda_1>1$. It holds $e(1,\lambda_2)=1$ for all $\lambda_2\in(0,1)$ and $\partial_1 r(\lambda_1,\lambda_2)<0$
for all $(\lambda_1,\lambda_2)\in (1,\infty) \times (0,1)$. Thus, it follows directly that 
$r(\lambda_1,\lambda_2)<1$ for all $(\lambda_1,\lambda_2)\in (1,\infty) \times (0,1)$. 
\end{proof}

\subsection{Discussion}\label{section:linear-analysis:restarted-aa:discussion}
We make the following comments:

\begin{itemize}
  \item The convergence result in Corollary~\ref{corollary:aa:convergence} deals only with positive definite matrices.
  In Fig.~\ref{figure:proof:anderson-convergence}, eigenvalue pairs $(\lambda_1,\lambda_2)\in\mathbb{R}\times\mathbb{R}$ are displayed
  satisfying $r(\lambda_1,\lambda_2)<1$ and therefore guaranteeing AA$^\star$(1) to converge.
  In particular, AA$^\star$(1) converges also for matrices with two eigenvalues
  larger than 1 with relatively close distance to each other.
  
  \item In practice, we do not experience AA$^\star$(1) or AA(1) to fail as long as Assumption~(C4) is valid
  and $|\lambda_1|<1$ or $|\lambda_2|<1$. This observation extends also to arbitrarily large decompositions of 
  $\e{0}$ as long as at most one eigenvalue of $\A$ satisfies $|\lambda_j|>1$. 
  Based on similar observations, we state the following claim:
  If $|\lambda_j|>1$ for exactly $m$ eigenvalues $\{\lambda_j\}_j$, then AA($m$) converges
  for arbitrary $\e{0}$. We note that the worst case approach used to in order to prove 
  Lemma~\ref{lemma:anderson-star-1} cannot be applied to prove the general claim.
  It can be verified numerically that in general the eigenvalues of the error propagation 
  matrix~\eqref{general-eigenvalue-bound} can be larger than 1 even if $\rho(\A)<1$.
  
  \item From Fig.~\ref{figure:proof:anderson-convergence}, it follows, the closer the eigenvalues to $1$,
  the slower the convergence of AA$^\star$(1). This is consistent with the interpretation of Anderson acceleration 
  as secant method. The Richardson iteration does only damp slowly directions corresponding to 
  eigenvalues close to 1. Hence, a directional derivative in these directions cannot be approximated
  well purely based on the iterations of fixed-point iterations. Quite contrary to directions 
  corresponding to small or large eigenvalues relative to $1$.

  \item The theoretical convergence result has been obtained from a worst case analysis. 
  Practical convergence rates might be lower than predicted, depending on the weights of 
  the initial error.
  
  \item Convergence of AA($m$) is not guaranteed to be monotone, when applied 
  for non-contractive fixed-point iterations.
  
\end{itemize}

\section{Numerical results -- Performance study}\label{section:numerical-results}

In this section, we will show \new{three} numerical examples, \new{with increasing complexity}, comparing the linearization
schemes, presented in Section~\ref{section:linearization}, coupled with Anderson acceleration.
In particular, we confirm numerically the parabolic character of the nonlinear Biot equations, 
cf.\ Remark~\ref{remark:model:properties}, the convergence result for the Fixed-Stress-L-scheme, 
cf.\ Theorem~\ref{theorem:convergence:fsl}, as well as the acceleration and stabilization 
properties of the Anderson acceleration, cf.\ Section~\ref{section:linear-analysis:restarted-aa}.
All numerical results have been obtained using the software environment DUNE~\cite{Bastian2008,Bastian2008b,dune2.4}.

\subsection{Test case I -- Injection in a \new{2D homogeneous} medium with Lipschitz continuous constitutive laws}\label{section:numerical-results:testcase-I}

We consider a two-dimensional, homogeneous, unsaturated porous medium $(-1,1)\times(0,1)$,
in which a fluid is injected at the top, cf.\ Figure~\ref{figure:injection:geometry}. 
Due to the symmetry of the problem, we consider only the right half $\Omega=(0,1)\times(0,1)$,
discretized by $50 \times 50$ regular quadrilaterals. 
As initial condition, we choose a constant displacement and pressure field with $\u(0)=\bm{0}$ and 
$\p(0)=p_0$, satisfying the stationary version of the continuous problem~\eqref{model:p}--\eqref{model:u}.
In order to avoid inconsistent initial data, we ramp the injection at the top with inflow rate 
$q_\mathrm{inflow}(t) = q^\star \times \text{min}\,\{t^2, 1.0\}$ for given $q^\star\in\mathbb{R}$.
Apart from the inflow at the top, we consider no flow at the remaining boundaries, no normal 
displacement at left, right and bottom boundary and no stress on the top. The boundary conditions
are displayed in Figure~\ref{figure:injection:geometry}.

\begin{figure}[h!]
\begin{center}
\vspace{0.5cm}
 \begin{overpic}[width=9cm]{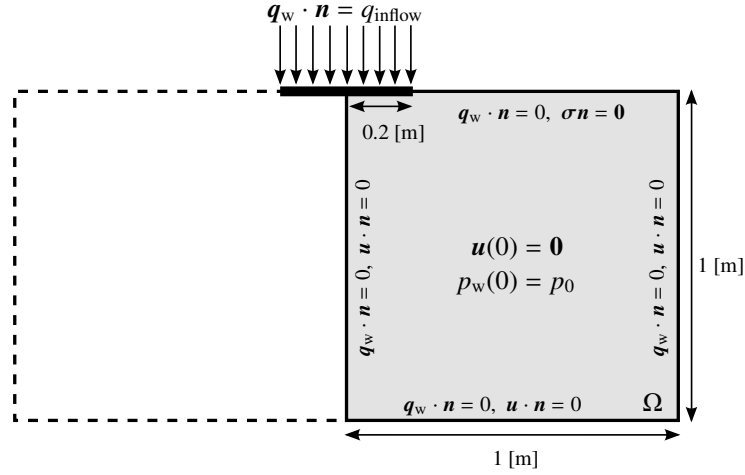}
  \put(67,28){$\u(0)=\bm{0}$}
  \put(64.75,23){$\p(0)=p_0$}
  \put(37,63){$\q\cdot\n = q_\mathrm{inflow}$}
  \put(65,48){\footnotesize $\q\cdot\n = 0$,\ \ $\stress\n = \bm{0}$}
  \put(93,13){\rotatebox{90}{\footnotesize $\q\cdot\n = 0$,\ \  $\u\cdot\n = 0$}}
  \put(50,13){\rotatebox{90}{\footnotesize $\q\cdot\n = 0$,\ \  $\u\cdot\n = 0$}}
  \put(57,5){\footnotesize $\q\cdot\n = 0$,\ \  $\u\cdot\n = 0$}
  \put(70,-3){\footnotesize $1\ [\mathrm{m}]$}
  \put(100,25.5){\footnotesize $1\ [\mathrm{m}]$}
  \put(51,45){\footnotesize $0.2\ [\mathrm{m}]$}
  \put(92,5){$\Omega$}
 \end{overpic}
\end{center}
 \caption{Domain $\Omega$ and boundary conditions for test case~I.}
 \label{figure:injection:geometry}
\end{figure}

\paragraph{Physical and numerical parameters}
For the constitutive laws, governing saturation and permeability, we use the van Genuchten-Mualem 
model~\cite{vanGenuchten1980}, defining
\begin{align*}
  \s(\pw) 	&= \left\{ \begin{array}{l l} (1 + (-\aVG \pw)^\nVG)^{-\frac{\nVG-1}{\nVG}} &, \pw\leq 0, \\ 1&,\text{else,}\end{array} \right.
  &
  \permeability(\s) 	&= \frac{\k}{\mu_w} \,\sqrt{\s} \, \left(1-\left(1-\s^\frac{\nVG}{\nVG-1}\right)^\frac{\nVG-1}{\nVG}\right)^2,\ \s\in[0,1],
\end{align*}
where $\aVG$ and $\nVG$ are model parameters associated to the inverse of the air suction value and pore size 
distribution, respectively, $\k$ is the intrinsic absolute permeability and $\mu_w$ is the dynamic fluid viscosity.

\begin{table}[h!]
\centering
\def\arraystretch{1.1}
{\footnotesize
 \begin{tabular}{|l|l|l|l|l|}
 \hline
 Parameter  & Variable & Test case I   & Test case II & Test case III \\
            & [unit]   & (Section~\ref{section:numerical-results:testcase-I}) & (Section~\ref{section:numerical-results:testcase-II}) & (Section~\ref{section:numerical-results:testcase-III}) \\
 \hline
 \hline
 Young's modulus		& $E$ [Pa]			& 3e1    & \new{3e1}	 			& 1e6              \\
 Poisson's ratio		& $\nu$	[-]			& 0.2    & 0.2		 			& 0.3              \\
 Initial pressure 		& $p_0$ [Pa]			& -7.78  & $-$15.3	 			& \textit{hydrostatic} \\
 Initial porosity		& $\phi_0$ [-]			& 0.2    & \new{0.2}	 			& 0.2              \\
 Inverse of air suction		& $\aVG$ [Pa$^{-1}$]		& 0.1844 & 0.627	 			& $1\mathrm{e-}4$  \\
 Pore size distribution		& $\nVG$ [-]			& 3.0    & 1.4		 			& 0.7$^{-1}$       \\
 Abs. permeability		& $\k$ [m$^2$]			& 3e$-$2 & \new{3e$-$2}	 			& $5\mathrm{e-}13$ \\
 Fluid viscosity		& $\mu_w$ [Pa$\cdot$s]		& 1.0    & \new{1.0}  	 			& $1\mathrm{e-}3$  \\
 Gravitational acc.     	& $\g$ [m/s$^2$]	        & 0.0    & 0.0	 				& 9.81             \\
 Biot coefficient		& $\alpha$ [-]			& 0.1 $|$ 0.5 $|$ 1.0   & 0.1 $|$ 0.5 $|$ 1.0  	& 1.0              \\
 Biot modulus			& $N$ [Pa]			& $\infty$& $\infty$	 			& $\infty$         \\
 Maximal inflow rate		& $q^\star$ [m$^2$/s]		& -1.25 & -0.175  				& [-] \\
 Final time			& $T\ [\mathrm{s}]$		& 1.0	 & 1.0		 			& 86400\ ($=10\ [\mathrm{days}]$) \\
 \hline
 Time step size	                & $\tau\ [\mathrm{s}]$     	& 1e$-$1 & 1e$-$1  				& 3600\ ($=1\ [\mathrm{hours}]$)  \\
 Absolute tolerance		& $\varepsilon_\mathrm{a}$	& 1e$-$8 & 1e$-$8 	 			& $1\mathrm{e-}3$ \\
 Relative tolerance		& $\varepsilon_\mathrm{r}$	& 1e$-$8 & 1e$-$8  				& $1\mathrm{e-}6$ \\
 \hline
 \end{tabular}
 }
 \caption{\label{table:numerical-results:parameters} Parameters employed for test cases I and II. 
 Top: Physical model parameters. Bottom: Numerical parameters.}
\end{table}

Values chosen for model parameters and numerical parameters are displayed in 
Table~\ref{table:numerical-results:parameters}. The parameters have been chosen such 
that the initial saturation is $\szero=0.4$ in $\Omega$ and a region of full saturation 
($\s=1$) is developed after seven time steps. Furthermore, the constitutive laws for 
saturation and permeability are Lipschitz continuous (with $\LipschitzS=0.12$). We 
consider three different values for the Biot coefficient, controlling whether the 
Richards equation or the nonlinear coupling terms determine the character of the 
numerical difficulties. The simulation result for strong coupling ($\alpha=1.0$) at final 
time $t=1$ is illustrated exemplarily in Figure~\ref{figure:injection:lipschitz-final-state}.

\begin{figure}[h!]
\begin{center}
\begin{overpic}[width=1.05\textwidth]{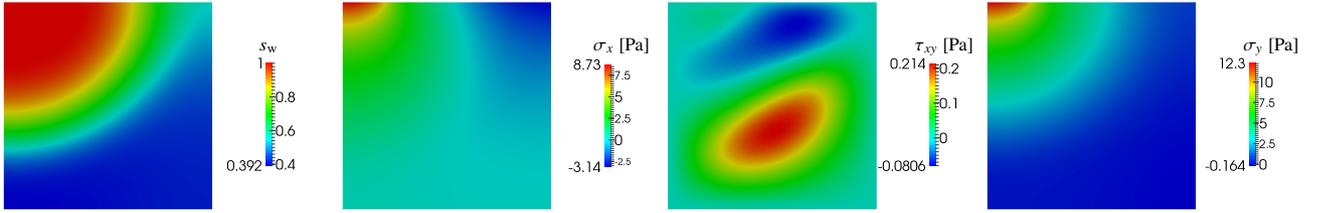}
 \put(20,13.4){\new{{\scriptsize $\s$}}}
 \put(45.5,13.4){\new{{\scriptsize $\sigma_x$ [Pa]}}}
 \put(70,13.4){\new{{\scriptsize $\tau_{xy}$ [Pa]}}}
 \put(95,13.4){\new{{\scriptsize $\sigma_y$ [Pa]}}}
\end{overpic}
 \caption{\label{figure:injection:lipschitz-final-state}Simulation results for test case~I (Lipschitz continuous permeability): 
 \new{Saturation, normal stresses $\sigma_x$, $\sigma_y$ and shear stress $\tau_{xy}$ at time $t=1$.}}
\end{center}
\end{figure}

\paragraph{Performance of linearization schemes}

We consider the four linearization schemes introduced in Section~\ref{section:linearization},
coupled with Anderson acceleration as post-processing. Abbreviations used in this section are 
introduced in Table~\ref{table:abbreviations-methods}.

\begin{table}[h!]
 \centering
\setlength{\tabcolsep}{0.4em}
\def\arraystretch{1.1}
\begin{tabular}{|l|l|}
 \hline
 Abbreviation 			& Explanation \\
 \hline \hline 
 Newton				& Monolithic Newton's method \\
 FS-Newton			& Fixed-Stress-Newton method \\
 FS-MP				& Fixed-Stress-Modified-Picard method \\
 FSL	 			& Fixed-Stress-L-scheme with $L=\LipschitzS + \beta_\mathrm{FS}$ \\
 FSL/2	 			& Fixed-Stress-L-scheme with $L = \tfrac{1}{2} (\LipschitzS + \beta_\mathrm{FS})$ \\
 \hline
 \texttt{LIN}-AA($m$)		& Anderson accelerated linearization scheme \texttt{LIN} with $m+1$ reused iterations \\
 \hline
\end{tabular}
\caption{Abbreviations for methods (top) and additional stabilizations (bottom), where \texttt{LIN} is a variable.}
\label{table:abbreviations-methods}
\end{table}

We use the average number of iterations per time step as measure for performance,
cf.\ Table~\ref{table:test-case-I:performance:1-3}. In particular, we disregard the use of CPU time as performance measure
due to a not finely-tuned implementation. We just note, that a single iteration of a splitting method is significantly
faster than a single monolithic Newton iteration.

First of all, all plain linearization schemes (AA(0)) succeed to converge for all three coupling 
strengths. This is consistent with Remark~\ref{remark:model:properties},
demonstrating that the nonlinear Biot equations do not adopt the degeneracy of the Richards equation
and remain parabolic in a fully saturated regime.
Not surprisingly, the monolithic Newton method requires fewest iterations. So at first impression,
it seems to be the preferred method. However, as stressed above, an advanced monolithic solver or an 
fixed-stress type iterative solver are required for efficient solution independent of the coupling 
strength,  i.e., additional costs are hidden. 
On the other hand, the remaining linearization schemes allow separate simulators from the beginning. 
As the Fixed-Stress-L-scheme does not utilize an exact evaluation of derivatives, the Fixed-Stress-Newton 
method and the Fixed-Stress-Modified-Picard perform better for all three coupling strengths. 
Solely the performance of the Fixed-Stress-Newton method shows weak dependence on the coupling strength.
The remaining methods show improved convergence behavior for increasing coupling strength,
due to the decreasing numerical complexity of the problem itself following from Remark~\ref{remark:model:properties}.

\begin{table}[h!]
\centering
\def\arraystretch{1.3}
\setlength{\tabcolsep}{0.4em}
{\small
 \begin{tabular}{llccclccclccclccclccc}
 \hline
  Linearization & & \multicolumn{3}{c}{Newton} & & \multicolumn{3}{c}{FS-Newton}& & \multicolumn{3}{c}{FS-MP}& & \multicolumn{3}{c}{FSL}& & \multicolumn{3}{c}{FSL/2} \\
  \multicolumn{1}{l}{Biot coeff. $\alpha$} & & 0.1 & 0.5 & 1.0 & & 0.1 & 0.5 & 1.0 & & 0.1 & 0.5 & 1.0 & & 0.1 & 0.5 & 1.0 & & 0.1 & 0.5 & 1.0 \\
  \cline{1-1} \cline{3-5} \cline{7-9} \cline{11-13} \cline{15-17} \cline{19-21}
  AA(0)   &   & \textbf{5.3} & \textbf{5.1} & \textbf{5.0}& 
              & \textbf{6.0} & 8.3 & 10.6& 
              & 18.2 & 18.2 & 16.7 & 
              & 23.2 & 21.2 & 18.9 & 
              & 46.8 & 41.4 & 41.1 
              \\
  AA(1)   &   & 6.1 & 6.0 & 6.0& 
              & 6.2 & \textbf{7.6} & 8.9& 
              & 15.8 & 15.5 & 15.7 & 
              & 21.2 & 19.7 & 17.7 & 
              & 17.4 & 17.3 & 17.3 
              \\
  AA(3)   &   & 7.4 & 7.4 & 7.5& 
              & 7.4 & 7.7 & 8.5 & 
              & 13.4 & 13.6 & 13.5 &
              & 16.1 & 15.3 & 15.0 & 
              & 14.3 & 14.5 & 14.7  
              \\
  AA(5)   &   & 8.3 & 8.1 & 8.2 & 
              & 7.9 & 7.9 & \textbf{8.4} & 
              & 13.1 & 12.8 & 12.5 & 
              & 14.9 & 14.6 & 14.3 & 
              & 13.3 & 13.5 & 13.6 
              \\
  AA(10)  &   & -- & -- & --& 
              & -- & -- & --& 
              & \textbf{12.8} & \textbf{12.5} & \textbf{12.3}& 
              & \textbf{14.4} & \textbf{14.3} & \textbf{14.1} & 
              & \textbf{13.3} & \textbf{13.1} & \textbf{13.4}  
              \\
  \hline
 \end{tabular}
 }
  \caption{\label{table:test-case-I:performance:1-3} Performance for test case I with different coupling strengths 
  ($\alpha=0.1,\ 0.5,\ 1.0$). \new{Average number of (nonlinear) iterations per time step for Newton's method, the 
  Fixed-Stress-Newton method, the Fixed-Stress-Modified-Picard method and the Fixed-Stress-L-scheme; both plain and 
  coupled with Anderson acceleration for different depths ($m=1,\ 3,\ 5,\, 10$).}
  Minimal numbers per linearization type and Biot coefficient are in bold.}
\end{table}

When applying Anderson acceleration, we observe that Anderson acceleration slows down 
the convergence of the monolithic Newton method, which is consistent with considerations in 
Section~\ref{section:acceleration}. In contrast, Anderson acceleration speeds up significantly the convergence 
of the Picard-type methods (Fixed-Stress-L-scheme and the variation FSL/2, and Fixed-Stress-Modified-Picard).
Largest acceleration effect can be seen for largest considered depth.
For the Fixed-Stress-Newton method, the effect of Anderson acceleration depends on the numerical character 
of the problem. This is due to the fact, that for weak coupling, the method is essentially identical with 
Newton's method.

Regarding the Fixed-Stress-L-scheme, according to Theorem~\ref{theorem:convergence:fsl}, optimally the 
diagonal stabilization parameter has to be chosen as small as possible. However, smaller 
values do not necessarily lead to faster convergence, as can be observed by comparing the plain
Fixed-Stress-L-scheme and the plain FSL/2-scheme. Yet when utilizing Anderson acceleration, 
robustness with respect to the tuning parameter is increased, and eventually the FSL/2-scheme 
converges faster than the Fixed-Stress-L-scheme. In particular, it performs as good as the 
Fixed-Stress-Modified-Picard method.

All in all, the theory has been confirmed. The Fixed-Stress-L-scheme converges despite the simple 
linearization approach and Anderson acceleration is able to accelerate Picard-type schemes.
Moreover, the latter has been shown to stabilize the Fixed-Stress-L-scheme, allowing to choose
a small tuning parameter leading to improved convergence behavior.
Considering the cost per iteration, despite some additional iterations, we finally recommend the use of
the Fixed-Stress-Newton method with Anderson acceleration with low depth. It is cheap and allows 
separate simulators. For strongly coupled problems or in the absence of exact derivatives, 
the Fixed-Stress-L-scheme with small tuning parameter is an attractive alternative to the 
Fixed-Stress-Newton method.

\subsection{Test case II -- Injection in \new{2D homogeneous} medium with H\"older continuous permeability}\label{section:numerical-results:testcase-II}

In the following, we reveal the limitations of the considered linearization schemes. 
Moreover, we demonstrate the stabilization property of Anderson acceleration, allowing 
non-convergent methods to converge. For this purpose, we repeat test case~I
with modified physical parameters.  In particular, we choose the saturation to be 
Lipschitz continuous with same Lipschitz constant as in test case~I. In contrast, 
the permeability is chosen to be only H\"older continuous. Hence, the derivative
becomes unbounded in the transition between partial and full saturation, causing 
potential trouble for the Newton-type methods. Again, we choose the initial pressure 
and the maximal inflow rate such that $\szero=0.4$ and a region of full saturation ($\s=1$)
is developed after seven time steps. The simulation result for strong coupling at final 
time $t=1$ is illustrated in Figure~\ref{figure:injection:hoelder-final-state}.

\begin{figure}[h!]
\begin{center}
\begin{overpic}[width=1.05\textwidth]{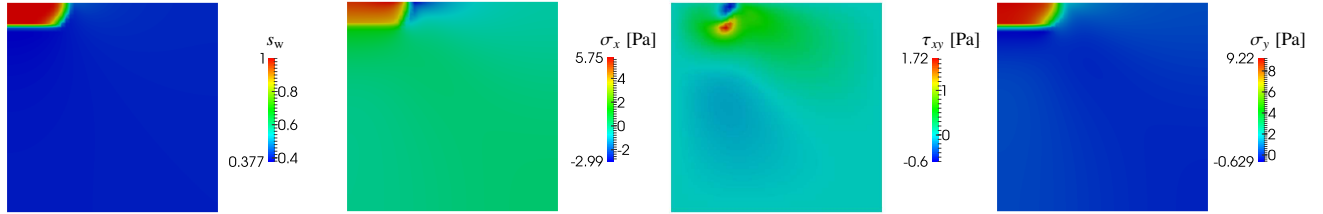}
 \put(20,13.4){\new{{\scriptsize $\s$}}}
 \put(45.5,13.4){\new{{\scriptsize $\sigma_x$ [Pa]}}}
 \put(70,13.4){\new{{\scriptsize $\tau_{xy}$ [Pa]}}}
 \put(95,13.4){\new{{\scriptsize $\sigma_y$ [Pa]}}}
\end{overpic}
 \caption{\label{figure:injection:hoelder-final-state}Simulation results for test case~II (H\"older continuous permeability): 
 \new{Saturation, normal stresses $\sigma_x$, $\sigma_y$ and shear stress $\tau_{xy}$ at time $t=1$.}}
\end{center}
\end{figure}

As mentioned in Remark~\ref{remark:limits-l}, due to lack of regularity for the permeability, 
each of the considered methods faces difficulties. For Newton-type methods (Newton, Fixed-Stress-Newton), 
the derivative of the permeability is evaluated, which might be unbounded.  Effectively, for 
the Fixed-Stress-L-scheme, this also means that $\LipschitzK\rightarrow\infty$ or in practice 
$\LipschitzK$ becomes very large. Hence, by Theorem~\ref{theorem:convergence:fsl}, the time step
size has to be chosen sufficiently small or possibly $L$ has to be chosen larger to guarantee convergence. 
We note that for chosen initial saturation the permeability is significantly lower than for test case~I. 
Consequently, the theoretical convergence rate for the plain Fixed-Stress-L-scheme~\eqref{fsl:p}--\eqref{fsl:u}
deteriorates. Due to round off errors stagnation is possible.

\paragraph{Performance of linearization schemes}

\begin{table}[h!]
\centering
\def\arraystretch{1.3}
\setlength{\tabcolsep}{0.3em}
{\small
 \begin{tabular}{llccclccclccclccclccc}
 \hline
  Linearization & & \multicolumn{3}{c}{Newton} & & \multicolumn{3}{c}{FS-Newton}& & \multicolumn{3}{c}{FS-MP}& & \multicolumn{3}{c}{FSL}& & \multicolumn{3}{c}{FSL/2} \\
  \multicolumn{1}{l}{Biot coeff. $\alpha$} & & 0.1 & 0.5 & 1.0 & & 0.1 & 0.5 & 1.0 & & 0.1 & 0.5 & 1.0 & & 0.1 & 0.5 & 1.0 & & 0.1 & 0.5 & 1.0 \\
  \cline{1-1} \cline{3-5} \cline{7-9} \cline{11-13} \cline{15-17} \cline{19-21}
  AA(0)   &   & $\bm{\nearrow}[8]$ & \textbf{8.5} & \textbf{8.1}& 
              & $\bm{\nearrow}[9]$ & 13.2 & 19.1& 
              & $\bm{\rightarrow}[\new{3}]$ & 36.9 & 55.0 & 
              & $\bm{\rightarrow}[9]$ & 126.9 & 134.9 & 
              & $\bm{\rightarrow}[8]$ & $\bm{\rightarrow}[9]$ & $\bm{\rightarrow}[10]$ 
              \\
  AA(1)   &   & \textbf{10.7} & 9.4 & $\bm{\rightarrow}[8]$& 
              & \textbf{11.0} & \textbf{11.8} & 14.6& 
              & 45.2 & 34.2 & 33.8 & 
              & 133.6 & 84.0 & 83.2 & 
              & $\bm{\rightarrow}[9]$ & 68.5 & 65.1 
              \\
  AA(3)   &   & 17.2 & 11.7 & $\bm{\rightarrow}[8]$ & 
              & 15.6 & 12.1 & \textbf{13.0} & 
              & 30.5 & 26.9 & 28.1 &
              & 68.3 & 54.3 & 56.9 & 
              & 48.4 & 37.9 & 35.5  
              \\
  AA(5)   &   & 24.8 & 13.9 & $\bm{\rightarrow}[8]$ & 
              & 23.3 & 13.1 & 13.2 & 
              & 29.2 & 24.7 & 23.5 & 
              & 62.4 & 48.7 & 44.9 & 
              & 43.4 & 34.8 & 32.7 
              \\
  AA(10)  &   & 33.3 & 18.4 & $\bm{\rightarrow}[8]$& 
              & 43.0 & 14.7 & 13.8 & 
              & \textbf{29.8} & \textbf{23.5} & \textbf{23.5}& 
              & \textbf{52.6} & \textbf{42.6} & \textbf{42.5} & 
              & \textbf{39.3} & \textbf{31.8} & \textbf{29.2}  
              \\
  \hline
 \end{tabular}
 }
  \caption{\label{table:test-case-II:performance:1-4} Performance for test case II with different coupling strengths 
  ($\alpha=0.1,\ 0.5,\ 1.0$). \new{Average number of (nonlinear) iterations per time step for Newton's method, 
  the Fixed-Stress-Newton method, the Fixed-Stress-Modified-Picard method and the Fixed-Stress-L-scheme; 
  both plain and coupled with Anderson acceleration for different depths ($m=1,\ 3,\ 5,\, 10$).} 
  Minimal numbers per linearization type and Biot coefficient are in bold. Failing linearization due to 
  stagnation at time step $n$ is marked by $\bm{\rightarrow}[n]$. Failing linearization due to divergence 
  at time step $n$ is marked by $\bm{\nearrow}[n]$.}
\end{table}

The average number of iterations per time step is presented in Table~\ref{table:test-case-II:performance:1-4}.
In contrast to test case~I, not all plain linearization schemes (AA(0)) converge. For weak coupling, 
all Newton-like methods (Newton, Fixed-Stress-Newton) diverge with the Fixed-Stress-Newton method
being slightly more robust due to added fixed-stress stabilization. The Fixed-Stress-L-scheme
stagnates and shows to be slightly more robustness than the Newton-type methods. 
The Fixed-Stress-Modified-Picard method is least robust and stagnates already after two time steps.
For strong coupling, all methods converge, which is consistent with Remark~\ref{remark:model:properties}.
If convergent, the schemes sorted by required number of iterations are the monolithic 
Newton method, the Fixed-Stress-Newton method, the Fixed-Stress-Modified-Picard and the Fixed-Stress-L-scheme,
meeting our expectations.

By utilizing Anderson acceleration, convergence can be observed for all coupling 
strengths and all linearization schemes besides Newton's method for $\alpha=1$.
In particular, all previously failing schemes converge. 
This confirms the possible increase of robustness by Anderson 
acceleration, postulated in Section~\ref{section:linear-analysis:restarted-aa}.
Similar observations as before are made for the splitting schemes under Anderson acceleration.
All in all, the theory has been confirmed.

As before, for increasing depth, the performance of Newton's method deteriorates.
For strong coupling stagnation is observed. For weak coupling, for several time steps
practical stagnation is observed with eventual convergence after a very large number
of iterations. This is consistent with the fact that Anderson 
acceleration can also lead to divergence for increasing depth~\cite{Walker2011}.
Hence, Anderson acceleration has to be applied carefully for the monolithic Newton method.

Motivated by test case~I, we apply the Fixed-Stress-L-scheme with a decreased tuning parameter.
For this test case, the plain FSL/2-scheme fails for all coupling strengths.
As the FSL/2-scheme is \textit{a priori} less robust as the Fixed-Stress-L-scheme, 
this has been expected. Utilizing Anderson acceleration, the FSL/2-scheme eventually converges.
In particular, convergence is always faster than for the corresponding Fixed-Stress-L-scheme.
This again demonstrates the ability of the Anderson acceleration to increase robustness and
to relax assumptions for practical convergence.

According to the theory for the Fixed-Stress-L-scheme, a larger tuning parameter or a lower time 
step size could enable convergence, e.g., for $\alpha=0.1$, AA(0). However, we do not consider
those strategies here, as they lead to worse convergence rates and utilizing Anderson 
acceleration should be anyhow preferred.

Concerning the best splitting method, we again recommend the use of the Fixed-Stress-Newton 
method combined with Anderson acceleration with low depth. It is cheap, robust and allows 
separate simulators.

\subsection{Test case III -- Unsteady seepage flow through a 2D homogeneous levee}\label{section:numerical-results:testcase-III}

We consider unsteady seepage flow through a simple, two-dimensional, homogeneous levee, enforced by a flood. The levee consists of a lower and upper part (lower $5\ [\mathrm{m}]$ and upper $10\ [\mathrm{m}]$, respectively), cf.\ Figure~\ref{figure:test-case-III:domain}.  Initially, the water table lies at the interface between lower and upper part. The initial fluid pressure is a hydrostatic pressure with $p=0$ at the water table. The reference configuration, defined by the domain, is initially already consolidated under the influence of gravity. As $\u$ is the deviation of the reference configuration, effectively, no gravity is applied in the mechanics equation, but only in the flow equation. 

\begin{figure}[h!]
\centering
\begin{overpic}[width=12cm]{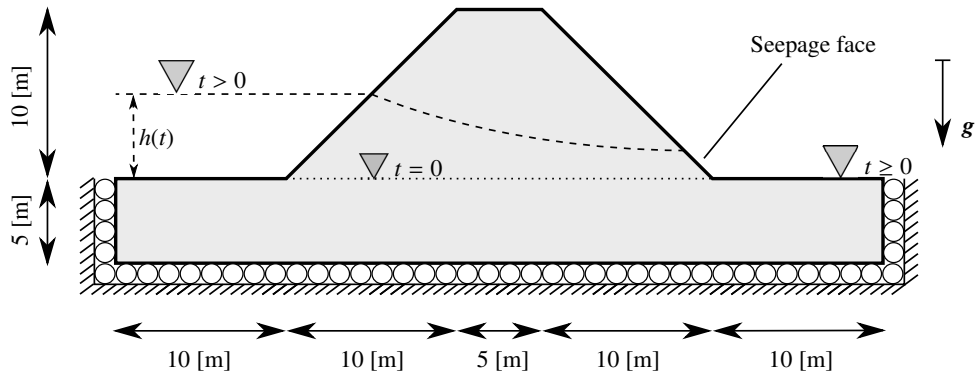}
  \put(78,32){\small Seepage face}
  \put(17.5,28){\small $t>0$}
  \put(39,18.3){\small $t=0$}
  \put(90.5,18.5){\small $t\geq0$}
  \put(101,23){\small $\bm{g}$}
  \put(11,21.7){\small $h(t)$}
  \put(-3,23.5){\rotatebox{90}{\small $10\ [\mathrm{m}]$}}
  \put(-3,10.5){\rotatebox{90}{\small $5\ [\mathrm{m}]$}}
  \put(14,-2.8){\small $10\ [\mathrm{m}]$}
  \put(33,-2.8){\small $10\ [\mathrm{m}]$}
  \put(48,-2.8){\small $5\ [\mathrm{m}]$}
  \put(61,-2.8){\small $10\ [\mathrm{m}]$}
  \put(80,-2.8){\small $10\ [\mathrm{m}]$}
\end{overpic}
\vspace{0.3cm}
\caption{\label{figure:test-case-III:domain} Domain, boundary and initial conditions for test case~III.}
\end{figure}

Over time, on the left hand side of the levee, the water table rises with constant speed for four days and remains constant for the next six days, defining $h(t)=2t\ [\mathrm{m/days}]$, $t\leq 4\ [\mathrm{days}]$ and $h(t)=8\ [\mathrm{m}]$, $t\geq 4\ [\mathrm{days}]$. Below $h(t)$ on the left, a hydrostatic pressure boundary condition is applied. On the right side, we apply approximate seepage face boundary conditions, based on the previous time step; i.e., given a fully saturated cell at the previous time step, a pressure boundary condition $p=0$ is applied on corresponding boundary for the next time step, otherwise a no-flow boundary condition is applied for the volumetric flux. On the remaining boundary, no-flow boundary conditions are applied for all time. For the mechanics, no displacement in normal direction is assumed on the boundary of the lower part of the levee. On the boundary of the upper part and the interface, zero effective stress is applied. The boundary conditions are visualized in Figure~\ref{figure:test-case-III:domain}.

\paragraph{Physical and numerical parameters}
The domain is discretized by a regular, unstructured, simplicial mesh with approximately 67,000 elements and 201,000 nodes. Compared to the previous test cases, we employ more realistic material parameters. Values chosen for model parameters and numerical parameters are displayed in Table~\ref{table:numerical-results:parameters}. We note, the resulting permeability is only H\"older continuous. The saturation history and deformation at four times is displayed in Figure~\ref{figure:test-case-III:saturation-history}. We observe steep saturation gradients during the flooding. Furthermore, both consolidation and swelling can be observed. All in all, the levee is pushed to the right.

\begin{figure}[h!]
 \centering
 \begin{overpic}[width=12cm]{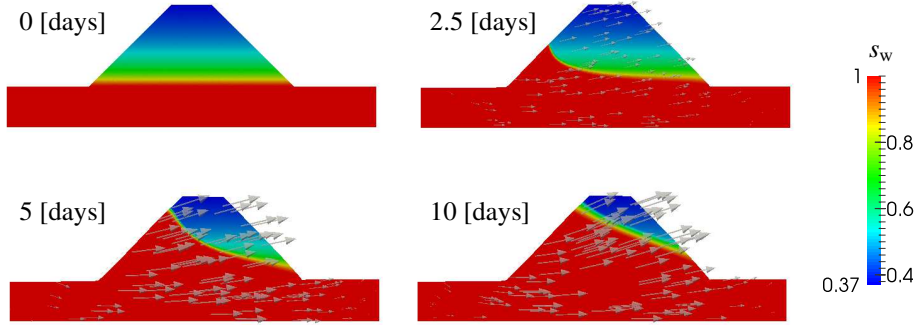}
  \put(2,35){$0\ [\mathrm{days}]$}
  \put(47,35){$2.5\ [\mathrm{days}]$}
  \put(2,14){$5\ [\mathrm{days}]$}
  \put(47,14){$10\ [\mathrm{days}]$}
  \put(95,32){$\s$}
 \end{overpic}
 \caption{\label{figure:test-case-III:saturation-history} Simulation result for test case III. Saturation for the deformed material at time $t=0\ [\mathrm{days}], 2.5\ [\mathrm{days}], 5\ [\mathrm{days}], 10\ [\mathrm{days}]$. Direction and intensity of deformation also indicated by arrows.}
\end{figure}

\paragraph{Performance of linearization schemes}

We consider the same linearization schemes as in the previous test cases, all but FSL, i.e., the Fixed-Stress-L-scheme with $L=\LipschitzS + \beta_\mathrm{FS}$. Based on previous observations, we expect FSL/2 coupled with Anderson acceleration to be more efficient than FSL. The average number of iterations per time step is presented in Table~\ref{table:test-case-III:performance}. First of all, we observe that all plain linearization schemes fail in the same phase of the simulation (after around 50 time steps). The reason for that lies mainly in the steep saturation gradients. As before, Anderson acceleration can yield remedy. However, for this test case, the simple combination of Newton's method and Anderson acceleration does not converge for any considered depth. Newton's method combined with Anderson acceleration is still not convergent for AA(1). For increasing depth the robustness decreases again, which is consistent with observations from the previous test cases. For the remaining linearization schemes convergence can be obtained. In particular, the Fixed-Stress-Newton method combined with AA(1) converges with the least amount of iterations. The Picard-type methods are slower, but show again more robustness with respect to increasing depth, whereas the Fixed-Stress-Newton method diverges eventually for $m=10$. Here, the Picard type methods require at least depth $m=3$ for successful convergence. After all, we conclude that the diagonal stabilization is essential for the success of the linearization schemes. The stabilization is added via both the fixed-stress splitting scheme and the L-scheme. Consequently, we expect also the monolithic Newton method to be convergent when adding sufficient diagonal stabilization.

\begin{table}[h!]
\centering
\def\arraystretch{1.3}
\setlength{\tabcolsep}{0.3em}
{\small
 \begin{tabular}{llclclclc}
 \hline
  Linearization & & Newton & & FS-Newton& & FS-MP& & FSL/2 \\
  \hline
  AA(0)   &   & $\bm{\nearrow}[56]$ & 
              & $\bm{\rightarrow}[57]$ & 
              & $\bm{\rightarrow}[48]$ & 
              & $\bm{\rightarrow}[48]$ 
              \\
  AA(1)   &   &  $\bm{\rightarrow}\textbf{[165]}$ & 
              & \textbf{10.2}& 
              & $\bm{\rightarrow}[86]$ & 
              & $\bm{\rightarrow}[73]$  
              \\
  AA(3)   &   & $\bm{\rightarrow}[90]$& 
              & 11.4 & 
              & 18.1 & 
              & 33.2  
              \\
  AA(5)   &   & $\bm{\rightarrow}[87]$& 
              & 10.6 & 
              & 16.9 & 
              & 30.2  
              \\
  AA(10)  &   & $\bm{\rightarrow}[87]$ & 
              & $\bm{\rightarrow}[87]$ & 
              & \textbf{16.0} & 
              & \textbf{28.3}  
              \\
  \hline
 \end{tabular}
 }
  \caption{\label{table:test-case-III:performance} Performance for test case III. Average number of (nonlinear)
  iterations per time step for Newton's method, the Fixed-Stress-Newton method, the Fixed-Stress-Modified-Picard 
  method and the Fixed-Stress-L-scheme; both plain and coupled with Anderson acceleration for different depths ($m=1,\ 3,\ 5,\, 10$). 
  Minimal numbers per linearization type are in bold. Failing linearization due to stagnation at time step $n$ is marked
  by $\bm{\rightarrow}[n]$. Failing linearization due to divergence at time step $n$ is marked by $\bm{\nearrow}[n]$.}
\end{table}

\section{Concluding remarks}\label{section:concluding-remarks}

In this paper, we have proposed three different linearization schemes for nonlinear
poromechanics of unsaturated materials. All schemes incorporate the fixed-stress 
splitting scheme and allow the efficient and robust decoupling of mechanics and 
flow equations. In particular, the simplest scheme, the Fixed-Stress-L-scheme, 
employs solely constant diagonal stabilization. It has been derived as L-scheme 
linearization of the Biot equations reduced to a pure pressure formulation. 
Under mild, physical assumptions, also needed for the mathematical model to be valid,
it has been rigorously shown to be a contraction. This also has been verified 
numerically. Exploiting the derivation of the Fixed-Stress-L-scheme allows 
modifications including first order Taylor approximations. In this way, we have 
introduced the Fixed-Stress-Modified-Picard and the Fixed-Stress-Newton method.

The derivation of the Fixed-Stress-L-scheme provides two particular side products.
First, it reveals the close relation of the L-scheme and the fixed-stress splitting
scheme. Second, the nonlinear Biot equations can be shown to be parabolic in the 
pressure variable. This holds in particular in the fully saturated regime
unlike for the Richards equation.

The theoretical convergence rate of the Fixed-Stress-L-scheme might deteriorate for
unfavorable situations, leading to slow convergence or even stagnation in practice. 
Similarly, the Fixed-Stress-Modified-Picard and Fixed-Stress-Newton methods are 
prone to diverge for H\"older continuous nonlinearities. In order to accelerate 
or retain convergence, we apply Anderson acceleration, which is a post-processing,
maintaining the decoupled character of the underlying splitting methods. The general 
increase of robustness and acceleration of convergence via the Anderson acceleration 
has been justified theoretically considering a special linear case. To our knowledge,
this is the first theoretical indication of this kind, considering non-contractive
fixed-point iterations.

In practice, Anderson acceleration has shown to be very effective for the considered Picard-type
methods, confirming the theoretical considerations. After all, we recommend the 
combination of the Fixed-Stress-Newton method and the Anderson acceleration, being 
very robust even for H\"older continuous nonlinearities. In case analytical 
derivatives are not available, we recommend the combination of the Fixed-Stress-L-scheme
with a decreased tuning parameter and Anderson acceleration. Without Anderson 
acceleration, convergence might not be guaranteed. Including it, does not only retain 
but it also significantly accelerates convergence. This is interesting, as the optimal
tuning parameter is not necessarily known \textit{a priori} and can be more safely approached
under the use of Anderson acceleration.

As outlook, with focus on large scale applications, the performance of the linearization
schemes should be analyzed under the use of parallel, iterative solvers; in particular,
as due to added stabilization, the arising linear systems are expected to be better 
conditioned than for the monolithic Newton method. Additionally, Anderson acceleration
should be further studied in the context of possibly non-contractive fixed point iterations.
Examples are (i) the linearization of degenerate problems including 
H\"older continuities, which are known to be difficult to solve~\cite{Both2018b}, and 
(ii) numerical schemes employing a tuning parameter. Based 
on the numerical results in this paper, the approach seems very promising.

\section*{Acknowledgments}

This work was partially supported by the Norwegian Academy of Science and Letters and Statoil
through the VISTA AdaSim project \#6367 and by the Norwegian Academy of Science and Letters 
through NFR project 250223.

\appendix

\section{Convergence proof of abstract L-scheme}\label{section:appendix}
We present the proof of Lemma~\ref{lemma:abstract:l-scheme:convergence} showing convergence of the 
L-scheme~\eqref{algebraic:l-scheme:abstract} as linearization for Eq.~\eqref{algebraic:nonlinear:abstract}.
The proof is essentially the same as given by~\cite{List2016}, but now written for an algebraic problem.

\begin{proof}[Proof of Lemma~\ref{lemma:abstract:l-scheme:convergence}]
 Let $\epV{i}=\pV{i}-\pV{}$. Then taking the difference of Eq.~\eqref{algebraic:l-scheme:abstract} 
 and \eqref{algebraic:nonlinear:abstract} yields
 \begin{align*}
  \LMatrix \left( \epV{i} - \epV{i-1} \right) + \left( \bV(\pV{i-1}) - \bV(\pV{}) \right) 
  + \tau \DM \KM(\pV{i-1}) \DM^{\top} \epV{i} 
  + \tau \DM \left(\KM(\pV{i-1}) - \KM(\pV{})\right)  \left(\rhsVFlux + \DM^{\top} \pV{}\right)
  &= 0.
 \end{align*}
 Multiplying with $\epV{i}$ and applying elementary algebraic manipulations, yields
 \begin{align}
  &\frac{L}{2} \| \epV{i} \|_{\MassMatrixP{}}^2 + \frac{L}{2} \| \epV{i} - \epV{i-1} \|_{\MassMatrixP{}}^2 - \frac{L}{2} \| \epV{i-1} \|_{\MassMatrixP{}}^2	\label{proof:abstract-l-scheme:eq-1:1} \\
  &\quad+ \langle \bV(\pV{i-1}) - \bV(\pV{}) , \epV{i-1} \rangle                                                        	                        \label{proof:abstract-l-scheme:eq-1:2} \\
  &\quad+ \langle \bV(\pV{i-1}) - \bV(\pV{}) , \epV{i} - \epV{i-1} \rangle                                                                        	\label{proof:abstract-l-scheme:eq-1:3} \\
  &\quad+ \tau \langle \KM(\pV{i-1}) \DM^{\top} \epV{i} , \DM^\top \epV{i} \rangle                                                                    	\label{proof:abstract-l-scheme:eq-1:4} \\
  &\quad+ \tau \langle \left(\KM(\pV{i-1}) - \KM(\pV{})\right)  \left(\rhsVFlux + \DM^{\top} \pV{}\right), \DM^\top \epV{i} \rangle                     \label{proof:abstract-l-scheme:eq-1:5}
  =0.  
 \end{align}
 By employing (L1), we obtain for the term~\eqref{proof:abstract-l-scheme:eq-1:2}
 \begin{align} \label{proof:abstract-l-scheme:eq-2}
  \langle \bV(\pV{i-1}) - \bV(\pV{}) , \epV{i-1} \rangle 
  \geq 
  \frac{1}{\LipschitzB} \left\| \bV(\pV{i-1}) - \bV(\pV{}) \right\|_{\MassMatrixP{-1}}^2.
 \end{align}
 By employing the Cauchy-Schwarz inequality and Young's inequality, we obtain for the term~\eqref{proof:abstract-l-scheme:eq-1:3}
 \begin{align}\label{proof:abstract-l-scheme:eq-3}
  \langle \bV(\pV{i-1}) - \bV(\pV{}) , \epV{i} - \epV{i-1} \rangle 
  \geq - \frac{1}{2L}  \left\|\bV(\pV{i-1}) - \bV(\pV{}) \right\|_{\MassMatrixP{-1}}^2  
       - \frac{L}{2} \left\| \epV{i} - \epV{i-1} \right\|_{\MassMatrixP{}}^2. 
 \end{align}
 By employing Assumption~(L2), we obtain for the term~\eqref{proof:abstract-l-scheme:eq-1:4}
 \begin{align}\label{proof:abstract-l-scheme:eq-4}
  \langle \KM(\pV{i-1}) \DM^{\top} \epV{i} , \DM^\top \epV{i} \rangle 
  \geq
  \km \| \DM^\top \epV{i} \|_{\MassMatrixQ{-1}}^2.
 \end{align}
 By employing Cauchy-Schwarz, Young's inequality, Assumption~(L2)--(L3), we obtain for
 the term~\eqref{proof:abstract-l-scheme:eq-1:5}
 \begin{align}\nonumber
  \langle \left(\KM(\pV{i-1}) - \KM(\pV{})\right)  \left(\rhsVFlux + \DM^{\top} \pV{}\right), \DM^\top \epV{i} \rangle
  &\geq
  - \frac{1}{2\km} \| {\MassMatrixQ{-1}} (\rhsVFlux + \DM^{\top} \pV{}) \|_\infty^2 \| (\KM(\pV{i-1}) - \KM(\pV{})) \MassMatrixQ{} \|_{\MassMatrixQ{},\infty}^2 - \frac{\km}{2} \| \DM^\top \epV{i} \|_{\MassMatrixQ{-1}}^2 \\
  &\geq
  - \frac{1}{2\km} q_\infty^2 \LipschitzK^2 \| \bV(\pV{i-1}) - \bV(\pV{}) \|_{\MassMatrixQ{-1}}^2 - \frac{\km}{2} \| \DM^\top \epV{i} \|_{\MassMatrixQ{-1}}^2.  
  \label{proof:abstract-l-scheme:eq-5}
 \end{align}
 Inserting Eq.~\eqref{proof:abstract-l-scheme:eq-2}--\eqref{proof:abstract-l-scheme:eq-5} into 
 Eq.~\eqref{proof:abstract-l-scheme:eq-1:1}--\eqref{proof:abstract-l-scheme:eq-1:5}, yields
 \begin{align}
    \left(\frac{1}{\LipschitzB} - \frac{1}{2L} - \tau \frac{q_\infty^2 \LipschitzK^2}{2\km} \right) \| \bV(\pV{i-1}) - \bV(\pV{}) \|_{\MassMatrixP{-1}}^2
  + \frac{L}{2} \| \epV{i} \|_{\MassMatrixP{}}^2
  + \tau \frac{\km}{2} \| \DM^\top \epV{i} \|_{\MassMatrixQ{-1}}^2                                                                    	
  \leq 
  \frac{L}{2} \| \epV{i-1} \|_{\MassMatrixP{}}^2.
 \end{align}
 Assuming $\frac{1}{\LipschitzB} - \frac{1}{2L} - \tau \frac{q_\infty^2 \LipschitzK^2}{2\km}\geq0$ and 
 applying an algebraic Poincar\'e inequality, yields the final result.
\end{proof}


\bibliographystyle{ieeetr}
\bibliography{l-scheme-biot}

\end{document}